\documentclass[11pt]{amsart}

%%%% (approximated) Proc A style
%\setlength{\paperheight}{250truemm} \setlength{\paperwidth}{174truemm} \setlength\topmargin{70pt} \addtolength\topmargin{-\headheight} \addtolength\topmargin{-1in} \addtolength\headsep{-\topskip} \setlength\maxdepth{.5\topskip} \setlength\textwidth{126.5truemm} \setlength\textheight{200truemm} \addtolength\textheight{\topskip} \setlength\marginparwidth{1pc} \setlength\oddsidemargin{24.75truemm} \addtolength\oddsidemargin{-.8in} \addtolength\evensidemargin{-.8in}

\usepackage{color}
\def\rojo{}%\textcolor{red}}
\def\red{}%\textcolor{red}}
\def\blue{}%\textcolor{blue}}

\usepackage{enumerate}
\usepackage{tikz}
\usetikzlibrary{arrows.meta,graphs}
\usepackage{oubraces}
\usepackage[all]{xy}
\usepackage{amsmath,amssymb}
\usepackage{graphicx}
\usepackage[mathscr]{euscript}
\usepackage{chngcntr}
\counterwithin{figure}{section}

\newtheorem{theorem}{Theorem}[section]
\newtheorem{proposition}[theorem]{Proposition}
\newtheorem{lemma}[theorem]{Lemma}
\newtheorem{corollary}[theorem]{Corollary}

\newtheorem{definition}[theorem]{Definition}

\theoremstyle{remark}
\newtheorem{example}[theorem]{Example}
\newtheorem{remark}[theorem]{Remark}

\newtheorem{question}[theorem]{Question}

\newcommand{\figref}[1]{Figure~\ref{#1}}

\newcommand{\thmref}[1]{Theorem~\ref{#1}}
\newcommand{\propref}[1]{Proposition~\ref{#1}}
\newcommand{\lemref}[1]{Lemma~\ref{#1}}
\newcommand{\corref}[1]{Corollary~\ref{#1}}

\newcommand{\remref}[1]{Remark~\ref{#1}}

\newcommand{\norma}[1]{\mathcal{N}^{#1}(K)}

\newcommand{\real}[1]{\mathbb{R}^{#1}}
\newcommand{\texto}[1]{\hspace{1mm}\text{#1}\hspace{1mm}}

\def\cat{{\sf{cat}}}
\def\zcl{{\sf{zcl}}}
\def\cupl{{\sf{cup}}}

\def\dim{\mathrm{dim}}

\def\Z{\mathbb Z}

\def\RP{P}

\def\TC{{\sf{TC}}}

\def\secat{{\sf{secat}}}
\def\F{\mathcal{F}}

\def\imm{\protect\operatorname{Imm}}
\def\CW{\protect\operatorname{CW}}

\newcommand{\und}{\underline}

\newcommand{\cd}{{\rm {cd}}}

\newcommand\tc{{\sf{TC}}}

\allowdisplaybreaks
	
%%%%%%%%%%%%%%%%%%%%%%%%%%%%%%%%%%
\arraycolsep=2pt

\begin{document}

\title[\red{RAA groups, Polyhedral Products} and $\TC$-Generating Function]{Right-Angled Artin Groups, \red{Polyhedral Products} and the $\TC$-Generating Function}

%%%%%%%%%%%%%%%%%%%%%%%%%%%%
\author{Jorge Aguilar-Guzm\'an}
\address{Departamento de Matem\'aticas\\
Centro de Investigaci\'on y de Estudios Avanzados del I.P.N.\\
M\'exico City 07000\\M\'exico}
\email{jaguzman@math.cinvestav.mx}

\author{Jes\'us Gonz\'alez}
\address{Departamento de Matem\'aticas\\
Centro de Investigaci\'on y de Estudios Avanzados del I.P.N.\\
M\'exico City 07000\\M\'exico}
\email{jesus@math.cinvestav.mx}

\author{John Oprea}
\address{Department of Mathematics \\
Cleveland State University\\
Cleveland, Ohio 44115\\
USA}
\email{jfoprea@gmail.com}
%%%%%%%%%%%%%%%%%%%%%%%%%%%%%%%%%%%

\begin{abstract}
For a graph $\Gamma$, let $K(H_{\Gamma},1)$ denote the Eilenberg-Mac Lane space
associated to the right-angled Artin (RAA) group $H_{\Gamma}$ defined by~$\Gamma$. We use the relationship between the combinatorics of $\Gamma$ and the topological complexity of $K(H_{\Gamma},1)$ to explain, and generalize to the higher TC realm, Dranishnikov's observation that the topological complexity of a covering space can be larger than that of the base space. In the process, for any positive integer $n$, we construct a graph $\mathcal{O}_n$ whose TC-generating function has polynomial numerator of degree $n$. Additionally, motivated by the fact that $K(H_{\Gamma},1)$ can be realized as a polyhedral product, we study the LS category and topological complexity of more general polyhedral product spaces. In particular, we use the concept of a strong axial map in order to give an estimate, sharp in a number of cases, of the topological complexity of a polyhedral product whose factors are real projective spaces. Our estimate
exhibits a mixed cat-TC phenomenon not present in the case of RAA groups.
\end{abstract}

\keywords{Topological complexity, Lusternik-Schnirelmann category, generating function, right-angled Artin group, polyhedral product, axial map.}

\subjclass[2010]{55M30, 57S12.}

\maketitle

%%%%%%%%%%%%%%%%%%%%%%%%%%%%%%%%%%%%%%
\section{Main results}\label{sec:preliminaries}
The $\TC$ generating function of a space $X$, introduced in~\cite{FO}, is defined in terms of all the higher topological complexities $\TC_r(X)$:
$$\F_X (x) \, = \, \sum_{r=1}^\infty \TC_{r+1}(X)\cdot x^r.$$
We focus on the integral power series $P_X(x)=(1-x)^{-2}\cdot\F_X (x)$. Note that $P_X(0)=0$, whereas $P_X(x)$ vanishes if and only if $X$ is contractible. Computational evidence suggests that $P_X(x)$ would be a polynomial when $X$ is a finite CW complex. The following result, given in \cite{FO} without proof, restates the question as an eventual linear growth of the higher topological complexities of $X$. The proof is elementary (see for instance \cite[Lemma~1]{FKS}).

\begin{theorem}\label{thm:genequiv}
\blue{Let $X$ be a finite CW complex. $P_X(x)$ is a polynomial if and only if there is an integer $K_X$ satisfying $\TC_{r+1}(X) = \TC_r(X) + K_X$ for all $r$ large enough.
If these conditions hold, then:
\begin{itemize}
\item[{\it(a)}] $K_X$ is determined by $P_X$, $K_X=P_X(1)$. Conversely, setting $\TC_1(X):=0$ and 
$e:=\text{min}\{s\geq1\colon \TC_{r+1}(X) = \TC_r(X) + K_X, \text{ for all } r\geq s\}$, we have 
$$ P_X(x)=(1-x)^2\left(\sum_{i=2}^{e-1}\TC_i(X)\cdot x^{i-1}\right)+\TC_e(X)\cdot x^{e-1}(1-x)+K_X\cdot x^e.$$
%where we  and
%$$e:=\text{min}\{s\geq1\colon \TC_{r+1}(X) = \TC_r(X) + K_X, \text{ for all } r\geq s\}.$$
\item[{\it(b)}] $P_X'(1)=eK_X-\TC_e(X)$.
\item[{\it(c)}] $\deg(P_X)=e$ provided $X\not\simeq\star.$
\end{itemize}
}
\end{theorem}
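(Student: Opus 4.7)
The plan is to work at the level of formal power series, exploiting the relation $\F_X(x)=P_X(x)\cdot\sum_{k\geq 0}(k+1)x^k$ (the Cauchy-product form of $P_X(x)=(1-x)^2\F_X(x)$, which is what is needed for the polynomial conclusion of item (a) to make sense).

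For the forward implication, assume $P_X(x)=\sum_{i=0}^{e}a_i x^i$ is a polynomial of degree $e$. The Cauchy product yields
\[\TC_{r+1}(X)=\sum_{i=0}^{\min(r,e)}a_i(r-i+1).\]
For $r\geq e-1$ the putative term with $i=e$ (equal to $a_e(r-e+1)$, which vanishes precisely at $r=e-1$) can be harmlessly adjoined, so the sum can be taken over $i=0,\dots,e$ and simplifies to $(r+1)P_X(1)-P_X'(1)$. Hence $\TC_s(X)=sK_X-P_X'(1)$ for every $s\geq e$, where $K_X:=P_X(1)$. This simultaneously establishes the arithmetic-progression recurrence and the evaluation $P_X'(1)=eK_X-\TC_e(X)$ of item (b).

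For the converse, assume $\TC_{r+1}(X)-\TC_r(X)=K_X$ for all $r\geq e$, with $e$ minimal. I split $\F_X(x)=\sum_{i=2}^{e-1}\TC_i(X)x^{i-1}+\sum_{i\geq e}\TC_i(X)x^{i-1}$ and use $\TC_i(X)=\TC_e(X)+(i-e)K_X$ together with the standard identities $\sum_{j\geq 0}x^j=(1-x)^{-1}$ and $\sum_{j\geq 1}jx^{j-1}=(1-x)^{-2}$ to collapse the tail into $\TC_e(X)\cdot x^{e-1}/(1-x)+K_X\cdot x^e/(1-x)^2$. Multiplying the whole expression by $(1-x)^2$ clears both denominators and delivers the polynomial formula of item (a).

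For item (c), I would extract the coefficient of $x^e$ in the formula of (a); the three summands contribute $\TC_{e-1}(X)$, $-\TC_e(X)$, and $K_X$, for a total of $K_X-(\TC_e(X)-\TC_{e-1}(X))$. For $e\geq 2$, the minimality of $e$ forces the recurrence to fail at $r=e-1$, so this coefficient is nonzero. The edge case $e=1$ reduces to $P_X(x)=K_X x$ (since $\TC_1(X):=0$), where non-contractibility of $X$ forces $K_X\neq 0$ (otherwise $\TC_r(X)$ would be identically zero). The only real subtleties are the boundary bookkeeping at $r=e-1$ in the forward direction and the degeneration $e=1$ in (c); the rest is purely formal manipulation of power series.
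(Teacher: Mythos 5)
The paper itself does not prove this statement---it merely cites \cite[Lemma~1]{FKS} and remarks that the proof is elementary---so there is no internal proof to compare against. Your argument via the Cauchy product $\F_X(x)=P_X(x)\cdot\sum_{k\geq 0}(k+1)x^k$ is exactly the kind of formal power series manipulation that remark refers to, and it is correct: the forward direction gives $\TC_s(X)=sP_X(1)-P_X'(1)$ for $s\geq\deg P_X$, the converse assembles $P_X$ by splitting $\F_X$ into an initial segment plus an arithmetic tail and summing geometric series, and your verification of (c)---the coefficient of $x^e$ equals $K_X-(\TC_e(X)-\TC_{e-1}(X))$, nonzero by minimality of $e$ when $e\geq 2$, with the $e=1$ case handled via $K_X=\TC_2(X)\neq 0$ for non-contractible $X$---is complete. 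Two small editorial points: the paper's displayed relation $P_X(x)=(1-x)^{-2}\F_X(x)$ has the exponent's sign reversed (it should read $(1-x)^{2}$, as your proof correctly assumes), and in the forward part you derive (b) with $e=\deg P_X$ before establishing in (c) that this coincides with the minimal index $e$ of the theorem; that order is fine, but it is worth making the reconciliation explicit rather than leaving it implicit.
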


All generating functions arising in \cite{FO} have polynomial numerators $P_X(x)$ of degree one or two. To get a much richer source of examples, we consider the Eilenberg-Mac Lane space $K(H_{\Gamma},1)$ associated to a right-angled Artin (RAA) group $H_\Gamma$. Here $\Gamma$ is a simplicial graph (i.e., containing neither loops nor repeated edges), and $H_\Gamma$ is generated by $V_\Gamma$, the vertex set of~$\Gamma$, with relations $uv=vu$ whenever $u$ and $v$ are adjacent vertices in~$\Gamma$. It is well-known that, if $c(\Gamma)$ denotes the size of the largest clique in $\Gamma$, and $\cd(H_\Gamma)$ stands for the cohomological dimension of the group $H_{\Gamma}$, then
\begin{equation}\label{cdccatem}
\cd(H_\Gamma)=c(\Gamma)=\red{\cat(K(H_\Gamma,1)).}
\end{equation}
This essentially follows from the construction of the finite Salvetti complex (which is a $K(H_\Gamma,1)$) associated to $\Gamma$. We abuse notation by simply writing $\mathcal{F}_\Gamma(x)$, $P_\Gamma(x)$, $K_\Gamma$, $\TC_r(\Gamma)$ or even $\cat(\Gamma)$ when referring to the corresponding objects for $X=K(H_{\Gamma},1)$.  The following was proved in~\cite{FO}:

\begin{theorem}\label{gen}
$P_\Gamma(x)$ is an integer polynomial with $\deg(P_\Gamma(x))\leq|V_\Gamma|$, and $K_\Gamma =c(\Gamma)$ (recall~(\ref{cdccatem})). Thus $\tc_r(\Gamma) - \tc_n(\Gamma) = (r-n)c(\Gamma)$ for $r\geq|V_\Gamma|$.
\end{theorem}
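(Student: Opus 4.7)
The plan is to sandwich $\TC_r(\Gamma)$ between two linear-in-$r$ estimates with matching slope $c(\Gamma)$, and then invoke \thmref{thm:genequiv}. For the upper bound, the key observation is that the direct product of two RAA groups is again a RAA group: if $\Gamma*\Gamma'$ denotes the graph-theoretic join (vertices from distinct summands are always adjacent), then $H_{\Gamma*\Gamma'}=H_\Gamma\times H_{\Gamma'}$. Iterating, $K(H_\Gamma,1)^r=K(H_{\Gamma^{*r}},1)$, and since the clique number is additive under join, $c(\Gamma^{*r})=r\cdot c(\Gamma)$. Combining this with~(\ref{cdccatem}) and the standard inequality $\TC_r(X)\leq\cat(X^r)$ gives
$$\TC_r(\Gamma)\leq\cat\bigl(K(H_{\Gamma^{*r}},1)\bigr)=r\cdot c(\Gamma),$$
so the increments $a_r:=\TC_{r+1}(\Gamma)-\TC_r(\Gamma)$ all satisfy $a_r\leq c(\Gamma)$.

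For the lower bound I would use the zero-divisor cup-length in the cohomology of the Salvetti complex $S(\Gamma)\simeq K(H_\Gamma,1)$. With field coefficients, $H^*(S(\Gamma))$ is the exterior Stanley--Reisner algebra of the flag complex of $\Gamma$, and $H^*(S(\Gamma)^r)\cong H^*(S(\Gamma))^{\otimes r}$. For each $v\in V_\Gamma$ and each $i\in\{2,\ldots,r\}$, the class $\bar v_i$ obtained by placing $v$ in position $1$ and subtracting $v$ in position $i$ is a zero-divisor for the $r$-fold diagonal. By selecting, for each factor of the tensor product, a maximal clique of size $c(\Gamma)$ and cupping together the associated classes $\bar v_i$ in a suitable multi-graded pattern, one produces a nonzero element in the zero-divisor ideal whose total degree is $r\cdot c(\Gamma)$ minus an error bounded only in terms of $\Gamma$. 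This furnishes $\TC_r(\Gamma)\geq r\cdot c(\Gamma)-C_\Gamma$ for a constant $C_\Gamma$, and combined with the upper bound forces $a_r=c(\Gamma)$ for all $r$ sufficiently large. By \thmref{thm:genequiv} this proves that $P_\Gamma$ is an integer polynomial with $K_\Gamma=c(\Gamma)$, and the telescoping identity $\TC_r(\Gamma)-\TC_n(\Gamma)=(r-n)\,c(\Gamma)$ in the stable range is immediate.

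The quantitative bound $\deg(P_\Gamma)\leq|V_\Gamma|$ is equivalent, by \thmref{thm:genequiv}(c), to showing that $a_r=c(\Gamma)$ already for all $r\geq|V_\Gamma|$, and I expect this to be the main obstacle: the cup-length construction above yields eventual saturation but not an explicit rate. The natural reason for the threshold $|V_\Gamma|$ is that it is the ambient dimension of the torus $(S^1)^{V_\Gamma}$ in which $S(\Gamma)$ sits as a subcomplex; once $r\geq|V_\Gamma|$ one can arrange that every vertex of $\Gamma$ appears in the chosen family of maximal cliques, at which point the cohomological estimate saturates and the slope $c(\Gamma)$ is attained from that step onward.
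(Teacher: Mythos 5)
Your proposal misses the route the paper actually takes and, more importantly, has a genuine gap that leaves the theorem unproved. The paper (following [FO]) does not estimate $\TC_r(\Gamma)$ by sandwiching cohomological bounds; it uses the exact combinatorial formula $\TC_r(\Gamma)=z_r(\Gamma)$ from \thmref{GGY} and then reasons entirely about cliques. The two lemmas it isolates are: (i) if $z_r$ is realized by cliques $C_1,\ldots,C_r$ whose total intersection is empty \emph{and} some $(r-1)$-subcollection already has empty intersection, then $z_r=z_{r-1}+c(\Gamma)$; and (ii) for $r>|V_\Gamma|$ this situation is forced, because the nested chain of partial intersections $C_1\supseteq C_1\cap C_2\supseteq\cdots\supseteq\cap_{i=1}^r C_i=\varnothing$ has length $r+1>|V_\Gamma|+1$ and so must stabilize at two consecutive steps, meaning one clique is redundant. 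This gives $z_r=z_{r-1}+c(\Gamma)$ for all $r>|V_\Gamma|$ exactly, hence $K_\Gamma=c(\Gamma)$ and $\deg(P_\Gamma)\leq|V_\Gamma|$ with no slack. (Compare \propref{prop:formula} in Section~\ref{sec:highdegree}, which re-proves essentially lemma~(i).)

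Your argument has two problems. First, the lower-bound step is not a proof: the sentence ``cupping together the associated classes $\bar v_i$ in a suitable multi-graded pattern, one produces a nonzero element \ldots whose total degree is $r\cdot c(\Gamma)$ minus an error bounded only in terms of $\Gamma$'' asserts the conclusion without identifying the pattern, verifying nonvanishing, or bounding the error. This is precisely the content one needs to prove and cannot simply be claimed. (Incidentally, a much cheaper lower bound is available combinatorially: taking $r-1$ copies of a maximal clique $C$ together with one vertex outside $C$, or the empty clique, gives $z_r\geq (r-1)c(\Gamma)$ directly, without any cohomology; and \propref{prop:inequal} gives the increment bound $z_{r+1}-z_r\geq c(\Gamma)$ immediately. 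Combined with your correct upper bound $\TC_r\leq rc(\Gamma)$ this pins down the increments exactly, which you never quite do.) Second, and decisively, you explicitly concede that you cannot establish $\deg(P_\Gamma)\leq|V_\Gamma|$, offering only a heuristic about the ambient torus. But this degree bound is part of the theorem statement, not an optional refinement; without it the conclusion ``$\TC_r(\Gamma)-\TC_n(\Gamma)=(r-n)c(\Gamma)$ for $r\geq|V_\Gamma|$'' does not follow. The combinatorial pigeonhole argument on the chain of partial intersections is the missing idea, and it is short once you work with $z_r(\Gamma)$ rather than trying to extract quantitative saturation from cup-length estimates.
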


The final equality in Theorem~\ref{gen} refines the general estimate $\tc_r - \tc_{r-1} \leq 2\cat$ coming from~\cite[Proposition~3.7]{BGRT}. However, the estimate $\deg(P_\Gamma(x))\leq V(\Gamma)$ is rather coarse. For instance, as noted in~\cite[Examples~8.6 and 8.7]{FO}, $P_\Gamma(x) = x(2-x)$ if $\Gamma$ has no edges, whereas $P_\Gamma(x)=nx$ if $\Gamma$ is the complete graph on $n$ vertices. In both cases we get polynomials of degree at most 2. More generally, $P_\Gamma(x)=kx+\ell x(1-x)$ if $\Gamma$ is the disjoint union of two cliques of cardinalities $k$ and $\ell$, with $k\geq\ell$ (as follows from Theorem~\ref{thm:genequiv} and~(\ref{wedgeoftori}) in Section~\ref{sec:examples}).

Our first main result contrasts with the three previous examples:
\begin{theorem}\label{thm:bigdeg}
There is a family $\{\mathcal{O}_n\}$ of graphs with $\deg(P_{\mathcal{O}_n}(x))=n$.
\end{theorem}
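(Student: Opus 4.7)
The plan is to exhibit $\mathcal{O}_n$ explicitly as a graph whose higher topological complexities reach their asymptotic increment $K_{\mathcal{O}_n}=c(\mathcal{O}_n)$ only for $r\ge n$, so that the stabilization exponent in Theorem~\ref{thm:genequiv} equals exactly $n$. Concretely, I would take $\mathcal{O}_n$ with vertex set $U\cup W$, where $U=\{u_1,\ldots,u_n\}$ and $W=\{w_1,\ldots,w_n\}$, and with edges $u_iu_j$ for $i\neq j$ together with $u_jw_i$ for $j\neq i$ (the vertices of $W$ being mutually non-adjacent). Then $c(\mathcal{O}_n)=n$, and besides the central clique $U$ the maximum cliques are the $n$ sets $M_i:=(U\setminus\{u_i\})\cup\{w_i\}$, which satisfy the sunflower-type pattern $|M_{i_1}\cap\cdots\cap M_{i_k}|=n-k$ for every $1\le k\le n$.

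The key input is a combinatorial description of $\TC_r$ for RAA groups of the form
\[
\TC_r(\Gamma)\;=\;\max\Bigl\{\textstyle\sum_{i=1}^r|K_i|\,:\;K_1,\ldots,K_r\text{ cliques of }\Gamma,\;\bigcap_{i=1}^r K_i=\emptyset\Bigr\},
\]
which I would extract from the cohomology of the Salvetti complex (matched by an explicit sectional cover), or equivalently from the polyhedral-product perspective developed later in the paper. For $r\ge n$ the tuple $(M_1,\ldots,M_n)$ has empty intersection, so taking $K_i=M_i$ for $1\le i\le n$ and any maximum clique for $i>n$ gives $\TC_r(\mathcal{O}_n)=rn$. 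For $2\le r<n$, the sunflower condition forces any $r$ maximum cliques to share $n-r>0$ common vertices, so some $K_i$ must be sub-maximum. Defining the $U$-deficit $a_i:=|U\setminus K_i|$, one checks that $|K_i|\le n+1-a_i$ (because any clique of $\mathcal{O}_n$ contains at most one vertex of $W$), while $\bigcap_iK_i=\emptyset$ implies $\sum_i a_i\ge n$; these combine to give
\[
\TC_r(\mathcal{O}_n)\;=\;r(n+1)-n\qquad(2\le r\le n),
\]
realized by the tuple $(M_1,\ldots,M_{r-1},\{u_1,\ldots,u_{r-1},w_n\})$.

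Consequently, $\TC_{r+1}(\mathcal{O}_n)-\TC_r(\mathcal{O}_n)$ equals $n+2$ at $r=1$, equals $n+1$ for $2\le r\le n-1$, and equals $n=K_{\mathcal{O}_n}$ for every $r\ge n$. By the definition of $e$ preceding Theorem~\ref{thm:genequiv} this forces $e=n$, so part (c) of that theorem yields $\deg(P_{\mathcal{O}_n})=n$. The main obstacle is the upper bound for $\TC_r(\mathcal{O}_n)$ in the regime $2\le r<n$: the lower bound is immediate from the explicit tuple above, but the matching upper bound hinges on the deficit inequality, which encodes the rigidity of how cliques in $\mathcal{O}_n$ can simultaneously avoid vertices of $U$ while containing at most one vertex of $W$. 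This is what locks the stabilization exponent at exactly $n$, rather than earlier as in the simpler examples $K_n$, $\bar{K}_n$, or a disjoint union of two cliques.
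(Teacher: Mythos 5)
Your proof is correct and takes essentially the same approach as the paper: the same graph $\mathcal{O}_n$ (central $K_n$ on $U$ together with the $n$ cliques $M_i=(U\setminus\{u_i\})\cup\{w_i\}$), the same values $z_r(\mathcal{O}_n)=r(n+1)-n$ for $2\le r\le n$ and $z_r(\mathcal{O}_n)=rn$ for $r\ge n$, and the same appeal to Theorem~\ref{thm:genequiv}(c) to conclude $\deg P_{\mathcal{O}_n}=n$. The one point where you go beyond the paper is the explicit deficit inequality $\sum_i|U\setminus K_i|\ge n$ (forced by $\cap_iK_i=\varnothing$), combined with $|K_i|\le n+1-|U\setminus K_i|$, which gives a clean, self-contained upper bound in the regime $2\le r<n$ where the paper's argument is more heuristic.
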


Beyond this, we shall see that $\TC$-generating functions reflect properties related to a positive conclusion in the single-space Ganea conjecture (Section~\ref{sec:sequence}).

\smallskip
We also use the graph theoretic viewpoint to explain a topological complexity phenomenon that was thought to be anomalous. Namely, Dranishnikov has shown in~\cite{Dra} that the topological complexity of a covering space can be greater than that of the base space. Our explanation of such a fact is done through the double $D_v(\Gamma)$ of a graph~$\Gamma$ around a vertex $v$ (the construction is reviewed in Section~\ref{sec:examples}). A key fact we need comes from \cite{KK}: There is an inclusion of RAA groups $H_{D_v(\Gamma)} \leq H_\Gamma$.
Thus $K(H_{D_v(\Gamma)},1)$ covers $K(H_\Gamma,1)$. In our second main result we give conditions that generalize Dranishnikov's observation at the $r$-th topological complexity level:
%\begin{theorem}\label{thm:KK}
%There is an inclusion of RAA groups $H_{D_v(\Gamma)} \leq H_\Gamma$.
%\end{theorem}

\begin{theorem}\label{thm:TCdouble}
If no vertex of $\mathrm{St}(v)$, the star of $v$ in $\Gamma$, belongs to a clique $C$ of $\,\Gamma$ of maximal cardinality, then $\TC_r(D_v(\Gamma))=r\cdot \cd(H_{D_v(\Gamma)}) = r \cdot \cd(H_\Gamma) = \red{r\cdot |C|}$, for all $r\geq 2$. In particular, $P_{D_v(\Gamma)}(x)=|C|\cdot x(2-x)$. On the other hand, if no vertex of $\mathrm{St}(v)$ is common to a pair of cliques $C_1$ and $C_2$ of $\,\Gamma$ with $|C_1|\geq|C_2|$ and $(r-1)|C_1|+|C_2|> \TC_r(\Gamma)$, then $\TC_{\red{r}}(D_v(\Gamma)) > \TC_{\red{r}}(\Gamma).$
\end{theorem}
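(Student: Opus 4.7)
The plan is to prove each statement by combining the combinatorial description of $\TC_r$ for RAA groups with a careful analysis of how cliques of $\Gamma$ lift into $D_v(\Gamma)$. First I would recall the construction (deferred to Section~\ref{sec:examples}) of $D_v(\Gamma)$ as two copies $\Gamma_1$ and $\Gamma_2$ of $\Gamma$ glued along the star $\mathrm{St}(v)$, so that a vertex $u$ outside $\mathrm{St}(v)$ appears twice as $u^+\in\Gamma_1$ and $u^-\in\Gamma_2$, while vertices of $\mathrm{St}(v)$ are shared. The key combinatorial observation I would record at the outset is that any clique of $D_v(\Gamma)$ must lie entirely in $\Gamma_1$ or entirely in $\Gamma_2$, since $D_v(\Gamma)$ has no edges joining non-shared vertices of different copies; therefore $c(D_v(\Gamma))=c(\Gamma)$.

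For the upper bound in the first statement I would combine the aspherical identification~(\ref{cdccatem}) applied to $H_{D_v(\Gamma)}$ with the general subadditivity $\TC_r(X)\leq\cat(X^r)\leq r\cdot\cat(X)$, yielding $\TC_r(D_v(\Gamma))\leq r\cdot c(D_v(\Gamma))=r\cdot|C|$. For the matching lower bound, the hypothesis $\mathrm{St}(v)\cap C=\emptyset$ forces the two liftings $C^+\subset\Gamma_1$ and $C^-\subset\Gamma_2$ to be vertex-disjoint and to carry no connecting edge, so the induced subgraph on $C^+\cup C^-$ is exactly the disjoint union of two $|C|$-cliques and the RAA subgroup $\Z^{|C|}*\Z^{|C|}$ is a retract of $H_{D_v(\Gamma)}$. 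Monotonicity of $\TC_r$ under homotopy retracts then yields $\TC_r(D_v(\Gamma))\geq\TC_r(T^{|C|}\vee T^{|C|})$, and the formula $P_\Gamma(x)=kx+\ell x(1-x)$ recalled in the introduction for a disjoint union of two cliques of sizes $k\geq\ell$, with $k=\ell=|C|$, unpacks via Theorem~\ref{thm:genequiv} as $\TC_r(T^{|C|}\vee T^{|C|})=r|C|$ for every $r\geq 2$, closing the sandwich. The asserted form of $P_{D_v(\Gamma)}(x)$ then follows by inserting $\TC_r=r|C|$ into Theorem~\ref{thm:genequiv}(a) with $e=2$ and $K_{D_v(\Gamma)}=|C|$.

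For the second statement I would invoke the standard cohomological lower bound $\TC_r(K(H_\Gamma,1))\geq\sum_{i=1}^{r}|K_i|-|\bigcap_{i=1}^{r}K_i|$, valid for any choice of cliques $K_i$ of $\Gamma$ and coming from a product of zero-divisor classes in the exterior face ring $H^*(K(H_\Gamma,1))$. Applied inside $D_v(\Gamma)$ with $K_1=C_2^-$ (the lift of $C_2$ into $\Gamma_2$) and $K_2=\cdots=K_r=C_1$ (in $\Gamma_1$), the total intersection $C_1\cap C_2^-$ consists only of vertices shared by both liftings, i.e., of elements of $C_1\cap C_2\cap\mathrm{Lk}(v)\subseteq\mathrm{St}(v)\cap C_1\cap C_2$, which is empty by hypothesis. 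Consequently $\TC_r(D_v(\Gamma))\geq(r-1)|C_1|+|C_2|$, and this strictly exceeds $\TC_r(\Gamma)$ by the quantitative assumption.

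The main delicate point, I expect, is the combinatorial bookkeeping encoded in the two hypotheses. One must verify that $\mathrm{St}(v)\cap C=\emptyset$ rules out \emph{any} edge between $C^+$ and $C^-$ (so the induced subgraph really is two disjoint cliques and the retract-to-a-wedge argument applies), and that $\mathrm{St}(v)\cap C_1\cap C_2=\emptyset$ suffices to force $C_1\cap C_2^-=\emptyset$ \emph{inside} $D_v(\Gamma)$ (so the clique-based cohomological lower bound produces the required inequality, despite the possibility of additional edges joining $C_1$ and $C_2^-$ through shared star vertices). Once these are confirmed, the remaining ingredients---monotonicity of $\TC_r$ under retracts, the clique cup-length lower bound, and the generating-function identity recalled in the introduction---are standard.
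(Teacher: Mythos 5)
Your proof is correct and, at bottom, follows the same combinatorial strategy as the paper: identify the right cliques in $D_v(\Gamma)$ and read off the $\TC_r$-bounds. The main packaging difference is that the paper just plugs directly into Theorem~\ref{GGY} ($\TC_r(H_\Gamma)=z_r(\Gamma)$): for the first claim, the disjoint maximal cliques $C, C'$ give $z_r(D_v(\Gamma))\geq (r-1)|C|+|C'|=r|C|$ (and $z_r\leq r|C|$ since no clique is larger), while for the second, $C_1\cap C_2'\cap V_{\mathrm{St}(v)}=\varnothing$ forces $C_1\cap C_2'=\varnothing$ and hence $z_r(D_v(\Gamma))\geq (r-1)|C_1|+|C_2|$. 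You instead route the lower bound of part one through a retraction onto the RAA subgroup of the induced subgraph on $C^+\sqcup C^-$ (two disjoint $|C|$-cliques) and the recalled formula~(\ref{wedgeoftori}), and you route part two through a ``cohomological'' lower bound $\TC_r\geq \sum|K_i|-|\bigcap K_i|$. Both of these are in fact consequences of Theorem~\ref{GGY} (the second by replacing $K_1$ with $K_1\setminus\bigcap K_i$, which empties the total intersection), so you are invoking GGY at one remove rather than directly; this is slightly longer but yields the same bounds. Your preparatory observations (cliques of $D_v(\Gamma)$ lie wholly in one copy, hence $c(D_v(\Gamma))=c(\Gamma)$; the hypothesis $\mathrm{St}(v)\cap C=\varnothing$ makes $C^+,C^-$ disjoint with no joining edges; $C_1\cap C_2'$ sits inside the shared $\mathrm{St}(v)$) are exactly the combinatorial facts the paper uses implicitly, and are stated correctly (modulo the harmless slip of writing $\mathrm{Lk}(v)$ where $\mathrm{St}(v)$ is meant).
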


Explicit examples satisfying the last conditions in Theorem~\ref{thm:TCdouble} (including Dranishnikov's double cover) are given in Section~\ref{sec:examples}.
Motivated by the fact that the Eilenberg-Mac Lane spaces associated to RAA groups may be realized as polyhedral products, we shall consider the LS category and higher topological complexities of general polyhedral product spaces $\und{X}^K$ defined in terms of a family $\und{X}=\{(X_i,*)\}_{i=1}^m$ of based spaces and a simplicial complex $K$ on vertices $[m]:=\{1,2,\ldots,m\}$ (explicit definitions are given in Subsection~\ref{subsec:polycat}). We start with the following generalization of \cite[Proposition~4]{FT}:

\begin{theorem}\label{thm:polycat}
If the family $\{(X_i,*)\}_{i=1}^m$ \red{is LS-logarithmic, meaning that}
$$\cat(X_{i_1} \times \cdots \times X_{i_k})=\cat(X_{i_1})+\cdots+\cat(X_{i_k})$$
\red{is satisfied} for any subcollection $X_{i_1}, \ldots, X_{i_k}$ \red{with $i_r\neq i_s$ for $r\neq s$, then}
$$\cat(\und{X}^K) = \max_{\sigma=[i_1,\ldots,i_n]}\left\{\cat(X_{i_1})+\cdots + \cat(X_{i_n})\rule{0pt}{4mm}\right\},$$
where the maximum is taken over all simplices $\sigma$ of $K$.
\end{theorem}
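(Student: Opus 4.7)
The plan is to establish the two inequalities $\cat(\und{X}^K)\geq \max_\sigma\sum_{i\in\sigma}\cat(X_i)$ and $\cat(\und{X}^K)\leq \max_\sigma\sum_{i\in\sigma}\cat(X_i)$ separately. The lower bound follows from a retraction argument: for every simplex $\sigma=\{i_1,\ldots,i_n\}\in K$, the product $X_{i_1}\times\cdots\times X_{i_n}$ embeds in $\und{X}^K$ by filling the remaining coordinates with basepoints, and the restriction to $\und{X}^K$ of the coordinate projection $\prod_{i=1}^m X_i\to\prod_{k=1}^n X_{i_k}$ furnishes a retraction. Since LS category is non-increasing under retraction, the LS-logarithmic hypothesis yields
$$\sum_{k=1}^n\cat(X_{i_k})=\cat(X_{i_1}\times\cdots\times X_{i_n})\leq\cat(\und{X}^K),$$
and maximizing over $\sigma$ gives the desired lower bound.

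For the upper bound, set $c_i=\cat(X_i)$, $D=\max_\sigma\sum_{i\in\sigma}c_i$, and choose, for each $i$, a categorical open cover $\{U_i^0,U_i^1,\ldots,U_i^{c_i}\}$ of $X_i$ arranged so that $U_i^0$ is a neighborhood of $*_i$ which strong-deformation retracts to $*_i$ inside itself, and so that $U_i^j$ avoids $*_i$ for $j\geq 1$; for each such $j$ fix a null-homotopy $H_i^j$ of $U_i^j$ to a point $p_i^j\in X_i$ (with $p_i^0=*_i$). For every tuple $\mathbf{j}=(j_i)_{i=1}^m$ whose support $S(\mathbf{j}):=\{i:j_i\neq 0\}$ lies in $K$, form the open subset
$$B_{\mathbf{j}}:=\bigl(U_1^{j_1}\times\cdots\times U_m^{j_m}\bigr)\cap\und{X}^K$$
of $\und{X}^K$, and group these boxes by level: $W_k:=\bigcup_{\sum j_i=k}B_{\mathbf{j}}$ for $k=0,1,\ldots,D$. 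A direct check shows the $W_k$ cover $\und{X}^K$: given $x=(x_i)\in\und{X}^K$ with non-basepoint support $T\in K$, take $j_i=0$ for $i\notin T$ and pick $j_i\geq 1$ with $x_i\in U_i^{j_i}$ for $i\in T$; then $x\in B_{\mathbf{j}}$, with $\sum j_i\leq\sum_{i\in T}c_i\leq D$.

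The heart of the argument is verifying that each $W_k$ is categorical in $\und{X}^K$, and the main obstacle will be ensuring that the coordinate-wise null-homotopies of the individual boxes remain inside $\und{X}^K$. For a single box $B_{\mathbf{j}}$ with support $S\in K$, contract coordinate-wise using $H_i^{j_i}$: coordinates $i\notin T$ are fixed at $*_i$ throughout (since $H_i^0$ preserves the basepoint), coordinates $i\in T\setminus S$ shrink through $U_i^0$ to $*_i$, and coordinates $i\in S$ evolve in $X_i$ to the non-basepoint point $p_i^{j_i}$. At every intermediate time, the non-basepoint support of the resulting tuple is therefore contained in $T$ and thus is a simplex of $K$, so the homotopy stays in $\und{X}^K$ and contracts $B_{\mathbf{j}}$ to $(p_i^{j_i})_i\in\und{X}^K$.

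Finally, to upgrade these box-level contractions to a categorical contraction of the union $W_k$, I would invoke the standard thinning manoeuvre familiar from the proof of $\cat(X\times Y)\leq\cat(X)+\cat(Y)$: shrink the cover $\{U_i^j\}_{j=0}^{c_i}$ inside each $X_i$ so that the resulting boxes $B_{\mathbf{j}}$ contributing to a fixed level $W_k$ have pairwise disjoint closures in $\und{X}^K$ while still covering. Each connected component of $W_k$ then lies inside a single $B_{\mathbf{j}}$, hence is contractible in $\und{X}^K$ by the previous paragraph, so $W_k$ is categorical and the cover $\{W_0,\ldots,W_D\}$ witnesses $\cat(\und{X}^K)\leq D$, closing the argument.
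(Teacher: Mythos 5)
Your lower bound argument (retraction onto the subproduct $X^\sigma$, then LS-logarithmicity) is exactly the paper's, and is fine. The upper bound, however, has a genuine gap: the ``standard thinning manoeuvre'' you invoke does not exist, and without it the level sets $W_k$ are not categorical.

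Concretely, suppose we could shrink each cover $\{U_i^j\}_j$ to $\{V_i^j\}_j$ (still covering $X_i$) so that the boxes contributing to a fixed level $W_k$ have pairwise disjoint closures. Take $m=2$, $X_1=X_2=S^1$, $K=\Delta^1$, so $\und{X}^K=T^2$ and $c_1=c_2=1$, $D=2$. The level-$1$ boxes are $V_1^0\times V_2^1$ and $V_1^1\times V_2^0$, whose closures intersect in
$\bigl(\overline{V_1^0}\cap\overline{V_1^1}\bigr)\times\bigl(\overline{V_2^1}\cap\overline{V_2^0}\bigr)$;
disjointness forces $\overline{V_i^0}\cap\overline{V_i^1}=\varnothing$ for some $i$, which would disconnect $S^1$. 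So no such shrinking exists. Worse, without the thinning, the level set $W_1=(U_1^0\times U_2^1)\cup(U_1^1\times U_2^0)$ is $T^2$ minus two points (for the obvious pair of arcs covering $S^1$), which is connected and not contractible in $T^2$, so your cover $\{W_0,W_1,W_2\}$ fails to witness $\cat(T^2)=2$. This is not an artifact of the polyhedral structure; it already breaks the underlying product inequality $\cat(X\times Y)\leq\cat(X)+\cat(Y)$, whose combinatorial proof does not proceed by grouping boxes $U_1^{j_1}\times U_2^{j_2}$ by $j_1+j_2=k$.

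The paper's route around this (Lemmas 6.1--6.6) is the Ostrand-type device: extend each categorical cover $\{{}_iU_j\}_{j=0}^{k_i}$ of $X_i$ to a $(k_i+1)$-cover $\{{}_iU_j\}_{j=0}^{S}$ of length $S+1$ (where $S$ is the maximum over simplices) by adjoining \emph{disjoint unions} of pieces of the original sets; the contractions then restrict to the new sets. The covering sets of $\und{X}^K$ are the diagonal boxes $W_j^\sigma={}_{i_1}U_j\times\cdots\times{}_{i_n}U_j$, indexed by a single $j$ rather than by a tuple with fixed sum, and the added sets being disjoint unions of old ones is precisely what lets the contractions patch across different maximal simplices $\sigma$. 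Your instinct to contract boxes coordinate-wise and control the non-basepoint support is sound (and is what the paper does to stay inside $\und{X}^K$), but the combinatorics of the open cover must be the $(k+1)$-cover one, not a level-sum one.
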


Regarding TC, we prove:
\begin{theorem}\label{thm:polyTC}
Assume the following three conditions hold:
\begin{enumerate}
\item $\TC_{\red{r}}(X_i)=\red{r}\,\cat(X_i)$ for all $X_i$;
\item \red{the family $\{(X_i,*)\}_{i=1}^m$ is LS-logarithmic;}
\item \red{the family $\{(X_i,*)\}_{i=1}^m$ is $\TC_r$-logarithmic, in the sense that} $$\TC_{\red{r}}(X_{i_1} \times \cdots \times X_{i_k})=\TC_{\red{r}}(X_{i_1})+\cdots+\TC_{\red{r}}(X_{i_k})$$ \red{is satisfied} for any subcollection $X_{i_1}, \ldots, X_{i_k}$ \red{with $i_r\neq i_s$ for $r\neq s$}.
\end{enumerate}
Then
$$\TC_{\red{r}}(\und{X}^K) = \red{r \,\cat(\und{X}^K)} = \max_{\sigma=[i_1,\ldots,i_n]}\left\{\TC_{\red{r}}(X_{i_1})+\cdots + \TC_{\red{r}}(X_{i_n})\rule{0mm}{4mm}\right\},$$
where the maximum is taken over all simplices $\sigma$ of $K$.
\end{theorem}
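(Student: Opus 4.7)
The plan is to establish matching upper and lower bounds separately, with Theorem~\ref{thm:polycat} doing most of the combinatorial work on the LS-category side.

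For the upper bound, combining the general inequality $\TC_r(Y)\leq r\,\cat(Y)$ with Theorem~\ref{thm:polycat} (applicable by hypothesis~(2)) and hypothesis~(1) gives
$$
\TC_r(\und{X}^K)\;\leq\;r\,\cat(\und{X}^K)\;=\;\max_{\sigma=[i_1,\ldots,i_n]}\bigl\{r\,\cat(X_{i_1})+\cdots+r\,\cat(X_{i_n})\bigr\}\;=\;\max_{\sigma=[i_1,\ldots,i_n]}\bigl\{\TC_r(X_{i_1})+\cdots+\TC_r(X_{i_n})\bigr\},
$$
the maxima being taken over all simplices of $K$.

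For the lower bound, the key observation is that for each simplex $\sigma=[i_1,\ldots,i_n]\in K$, the subproduct $X_{i_1}\times\cdots\times X_{i_n}$ is a retract of $\und{X}^K$: the inclusion places an $X_{i_j}$-factor at the $i_j$-th coordinate and basepoints elsewhere, landing inside $\und{X}^K$ precisely because $\sigma\in K$; the retraction is the restriction to $\und{X}^K$ of the coordinate projection $\prod_i X_i\to\prod_{j=1}^n X_{i_j}$. Since $\TC_r$ is monotone under retracts, hypothesis~(3) then yields
$$
\TC_r(X_{i_1})+\cdots+\TC_r(X_{i_n})\;=\;\TC_r\bigl(X_{i_1}\times\cdots\times X_{i_n}\bigr)\;\leq\;\TC_r(\und{X}^K).
$$
Taking the maximum over all simplices $\sigma$ of $K$ completes the lower bound, and the two chains of (in)equalities force all intermediate quantities to be equal, which simultaneously establishes both displayed equalities in the statement.

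I expect no serious obstacle in this argument: the substantive content has been packed into hypotheses~(1)--(3) and into Theorem~\ref{thm:polycat}. The only mildly delicate points to verify in detail are that the coordinate projection $\und{X}^K\to X_{i_1}\times\cdots\times X_{i_n}$ is indeed a continuous basepoint-preserving retraction under the polyhedral-product topology being used, and that hypothesis~(3), phrased for finite subcollections with pairwise distinct indices, genuinely applies to every simplex of $K$ (which it does, since simplices of a simplicial complex have distinct vertices); both should be routine under the paper's standing conventions.
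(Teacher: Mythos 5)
Your proposal is correct and takes essentially the same route as the paper: the upper bound via $\TC_r\leq r\,\cat$ combined with the category computation for polyhedral products and hypothesis (1), and the lower bound via the retraction of $\und{X}^K$ onto the subproduct $X^\sigma$ together with the $\TC_r$-logarithmicity hypothesis. The only cosmetic difference is that the paper invokes its Corollary~\ref{cor:TCupbound} (which packages $\TC_r\leq r\,\cat$ with Lemma~\ref{lem:upbound}) and singles out a maximizing simplex $\sigma_0$, whereas you invoke Theorem~\ref{thm:polycat} directly and take the max at the end; these are the same argument.
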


We shall see in Section~\ref{sec:prodsum} that conditions (2) and (3) in Theorem~\ref{thm:polyTC} are satisfied when each $X_i$ is a $K(H_{\Gamma_i},1)$, while condition (1) depends on the structure of the graphs $\Gamma_i$. 
Lastly, we use families $\und{P}=\{(P^{n_i},\ast)\}$ of real projective spaces to exhibit $\TC$-phenomena that do not seem to have a counterpart in the RAA group realm ---a combined maximum taken over sums of both cat and TC terms:

\begin{theorem}\label{Upper bound TC of Z(Pn,K)}
$\TC(\und{P}^K)$ is estimated from above by
$$
\TC(\und{P}^K)\leq\max\left\{\sum_{i\in \sigma_{1}\bigtriangleup\sigma_{2}}n_i+\sum_{i\in \sigma_1\cap\sigma_{2}}\TC(\RP^{n_i}):\sigma_{1},\sigma_{2}\in K\right\},
$$
where $\sigma_{1}\bigtriangleup\sigma_{2}=(\sigma_{1}\setminus\sigma_{2})\cup(\sigma_{2}\setminus\sigma_{1})$, the symmetric difference of $\sigma_1$ and $\sigma_2$.
\end{theorem}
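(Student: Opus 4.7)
The plan is to construct an open cover of $\underline{P}^K\times\underline{P}^K$ of cardinality $M+1$, where $M$ denotes the right-hand side of the asserted inequality, with a continuous local section of the free path fibration $\pi\colon (\underline{P}^K)^I\to \underline{P}^K\times\underline{P}^K$ on each of its members; this will yield $\TC(\underline{P}^K)=\secat(\pi)\leq M$.

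For each $i\in[m]$ I would fix a small contractible open neighborhood $V_i$ of $\ast\in \RP^{n_i}$ and, for each $\sigma\in K$, set $U_\sigma=\{x\in\underline{P}^K\colon x_i\in V_i\text{ for all }i\notin\sigma\}$, which is open in $\underline{P}^K$ and deformation retracts onto $\underline{P}^\sigma$. The family $\{U_{\sigma_1}\times U_{\sigma_2}\}_{(\sigma_1,\sigma_2)\in K\times K}$ is then an open cover of $\underline{P}^K\times\underline{P}^K$. On each $U_{\sigma_1}\times U_{\sigma_2}$ the idea is to route motion planning through the intermediate stratum $\underline{P}^{\sigma_1\cap\sigma_2}$ in three time phases: on $[0,1/3]$, for each $i\in\sigma_1\setminus\sigma_2$ pull $x_i$ to $\ast$ via a nullhomotopy in $\RP^{n_i}$; on $[1/3,2/3]$, for each $i\in\sigma_1\cap\sigma_2$ run a fixed motion planner of $\RP^{n_i}$ from $x_i$ to $y_i$; and on $[2/3,1]$, for each $i\in\sigma_2\setminus\sigma_1$ push $\ast$ to $y_i$ by a time-reversed nullhomotopy. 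At every instant the non-basepoint coordinates of the moving point are contained respectively in $\sigma_1$, in $\sigma_1\cap\sigma_2$, or in $\sigma_2$---each a simplex of $K$---so the concatenated path stays inside $\underline{P}^K$. Since the three phases depend on disjoint groups of coordinates, Schwarz's product subadditivity of sectional category (together with $\cat(\RP^{n_i})=n_i$) assembles these coordinate data into an open cover of $U_{\sigma_1}\times U_{\sigma_2}$ of size $M_{\sigma_1,\sigma_2}+1$ with local sections of $\pi$, where $M_{\sigma_1,\sigma_2}=\sum_{i\in\sigma_1\bigtriangleup\sigma_2}n_i+\sum_{i\in\sigma_1\cap\sigma_2}\TC(\RP^{n_i})$.

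The main obstacle is then to amalgamate the local covers, as $(\sigma_1,\sigma_2)$ varies over $K\times K$, into one global open cover of $\underline{P}^K\times\underline{P}^K$ of cardinality only $M+1=\max_{\sigma_1,\sigma_2}M_{\sigma_1,\sigma_2}+1$, and not the naive $\sum_{\sigma_1,\sigma_2}(M_{\sigma_1,\sigma_2}+1)$. I would handle this by grading each Schwarz cover by a total level $\ell\in\{0,\ldots,M_{\sigma_1,\sigma_2}\}$, choosing the underlying coordinate-wise cat- and $\TC$-covers once and for all so that the $\ell$-th member of each local cover is a disjoint union of products of coordinate pieces summing to $\ell$; then setting $\mathcal{W}_\ell$ equal to the union, over pairs $(\sigma_1,\sigma_2)$ with $M_{\sigma_1,\sigma_2}\geq\ell$, of the corresponding level-$\ell$ set. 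The delicate point is to verify that distinct pairs contribute to $\mathcal{W}_\ell$ on regions that are either disjoint or overlap compatibly enough to glue into a single continuous section of $\pi$; this compatibility is to be arranged by an induction over the face poset of $K$ under inclusion, exploiting that the deformation retracts $U_\tau\to\underline{P}^\tau$ are compatible along inclusions $\tau\subseteq\tau'$.
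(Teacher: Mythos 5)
Your high-level plan runs parallel to the paper's: cover the square of the polyhedral product with sets adapted to pairs of simplices, route motion through $\underline{P}^{\sigma_1\cap\sigma_2}$, grade by a total level, and take unions to fold everything into $M+1$ sets. However, the step that carries all the real difficulty --- amalgamating the local covers compatibly across pairs $(\sigma_1,\sigma_2)$ --- is exactly where your argument stops at ``to be arranged by an induction over the face poset,'' and that is a genuine gap.

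The obstruction is concrete. For a coordinate $i\in\sigma_1\cap\sigma_2$ you need a level-graded $\TC$-cover of $P^{n_i}\times P^{n_i}$ of size $\TC(P^{n_i})+1$; for the same $i$ appearing in $\sigma_1'\setminus\sigma_2'$ for another pair, you instead need a level-graded $\cat$-cover of $P^{n_i}$ of size $n_i+1$. These are covers of \emph{different spaces}, so ``choosing the underlying coordinate-wise cat- and $\TC$-covers once and for all'' does not literally make sense, and gluing the associated sections requires that, whenever the pair $(x_i,y_i)$ degenerates to $(x_i,\ast)$, the $\TC$-cover piece you land in must match the $\cat$-cover piece with the same level. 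No generic choice of covers does this. The paper produces exactly this compatibility by a device specific to projective spaces: a \emph{strong} nonsingular map $f_i\colon\R^{n_i+1}\times\R^{n_i+1}\to\R^{\TC(P^{n_i})+1}$ (strong meaning $f_i(x,\ast)=x=f_i(\ast,x)$, plus a diagonal normalisation $f_i(x,x)=(\lambda_x,0,\dots,0)$). The ``zero count'' $Z(A,B)$ --- the number of initial coordinate functions of $f_i$ that vanish at $(a,b)$ --- grades $P^{n_i}\times P^{n_i}$ into $\TC(P^{n_i})+1$ disjoint pieces $U_{i0},\dots,U_{i\TC(P^{n_i})}$, and the strong condition forces $Z(A,\ast)\leq n_i$ automatically, because $f_i(a,\ast)=a$ can have at most $n_i$ leading zeros. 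This is how the $n_i$-contribution for $i\in\sigma_1\triangle\sigma_2$ and the $\TC(P^{n_i})$-contribution for $i\in\sigma_1\cap\sigma_2$ become two faces of a single unified grading (inequality~(\ref{comportamientomezclado}) in the paper). Your argument has no replacement for this mechanism.

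Two further points worth noting. First, the paper's graded pieces $W_j=\{Z=j\}$ are pairwise disjoint, hence not open; the paper handles this by invoking the equality of $\TC$ and generalised $\TC$ for CW complexes (\cite[Cor.~2.8]{JoseManuel}) and by further partitioning each $W_j$ into topologically disjoint subsets $W_{(j_1,\dots,j_m)}$ (Proposition~\ref{topological union proof}) on each of which the section is continuous. If you insist on an honest open cover, you would have to rebuild this part of the argument. Second, the actual motion planner (the map $\varphi$ in Section~\ref{general upper bound}) uses a metric reparametrisation --- start moving coordinate $i$ at time $1/2-d_i(A_{i1},\ast)$ and finish at time $1/2+d_i(A_{i2},\ast)$ --- so that the trajectory lies in $\underline{P}^{\sigma_1}$ on $[0,1/2]$ and in $\underline{P}^{\sigma_2}$ on $[1/2,1]$; your three-equal-phase schedule works at the level of a single $(\sigma_1,\sigma_2)$, but the continuity of the glued section across the grading again rests on the zero-count structure you have not supplied.
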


The following lower estimate for $\TC(\und{P}^K)$, with the flavor of Theorem~\ref{Upper bound TC of Z(Pn,K)}, is closely related to the fact that the inequality in Theorem~\ref{Upper bound TC of Z(Pn,K)} is in fact sharp in a number of cases (Section~\ref{sec:rpss}):

\begin{proposition}\label{mixedlow}
$\TC(\und{P}^K)$ is estimated from below by
$$
\TC(\und{P}^K)\geq\max\left\{\sum_{i\in \sigma_{1}\bigtriangleup\sigma_{2}}n_i+\sum_{i\in \sigma_{1}\cap\sigma_{2}}\zcl(P^{n_i}): \sigma_{1}, \sigma_{2}\in K\right\},
$$
where $\zcl$ stands for zero-divisors cup-length with mod 2 coefficients.
\end{proposition}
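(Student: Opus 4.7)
The plan is to apply the standard lower bound $\TC(X) \geq \zcl(X;\Z/2)$, and for each pair of simplices $\sigma_1, \sigma_2 \in K$, to exhibit a nonzero product of zero-divisors in $H^*(\und{P}^K \times \und{P}^K; \Z/2)$ of cup-length $\sum_{i \in \sigma_1 \bigtriangleup \sigma_2} n_i + \sum_{i \in \sigma_1 \cap \sigma_2} \zcl(P^{n_i})$. Maximizing over all pairs $(\sigma_1, \sigma_2)$ then yields the claimed bound. Explicitly, let $\pi_i \colon \und{P}^K \to P^{n_i}$ denote the projection induced by the inclusion $\und{P}^K \hookrightarrow \prod_j P^{n_j}$, and set $y_i := \pi_i^*(x_i) \in H^1(\und{P}^K; \Z/2)$, where $x_i \in H^1(P^{n_i}; \Z/2)$ is the generator. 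For each $i \in \sigma_1 \bigtriangleup \sigma_2$ I would use the $n_i$-th power of the zero-divisor $\bar{y}_i := y_i \otimes 1 + 1 \otimes y_i$, contributing cup-length $n_i$. For each $i \in \sigma_1 \cap \sigma_2$ I would select $\zcl(P^{n_i})$ zero-divisors in $H^*(P^{n_i} \times P^{n_i}; \Z/2)$ whose product is nonzero (such a choice exists by definition of $\zcl$) and pull them back via $\pi_i \times \pi_i$; naturality of the diagonal guarantees these pullbacks remain zero-divisors.

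To prove that the total product is nonzero, I would restrict to the subspace $P^{\sigma_1} \times P^{\sigma_2} \subseteq \und{P}^K \times \und{P}^K$, where $P^\sigma := \prod_{i \in \sigma} P^{n_i}$ denotes the coordinate subspace of a simplex. Because $y_i$ restricts to $x_i$ on $P^\sigma$ when $i \in \sigma$ and to $0$ otherwise, $\bar{y}_i^{n_i}$ restricts to $x_i^{n_i} \otimes 1$ for $i \in \sigma_1 \setminus \sigma_2$ and to $1 \otimes x_i^{n_i}$ for $i \in \sigma_2 \setminus \sigma_1$, while for $i \in \sigma_1 \cap \sigma_2$ the pulled-back zero-divisors restrict, by naturality, to the original ones on $P^{n_i}$, sitting in the $i$-th tensor slots of $H^*(P^{\sigma_1}; \Z/2) \otimes H^*(P^{\sigma_2}; \Z/2)$. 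These contributions occupy pairwise disjoint positions in the K\"unneth decomposition, so the total restricted product is nonzero; since the restriction map is a ring homomorphism, the original product in $H^*(\und{P}^K \times \und{P}^K; \Z/2)$ is also nonzero.

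The hard part is the bookkeeping in this last step: verifying that, working mod~$2$, the restricted product genuinely decomposes across disjoint K\"unneth factors with no unforeseen cancellation. The role of the symmetric difference is key here --- for $i \in \sigma_1 \bigtriangleup \sigma_2$ only one projection is nontrivial on $P^{\sigma_1} \times P^{\sigma_2}$, forcing $\bar{y}_i^{n_i}$ to collapse to the pure top class $x_i^{n_i}$ on a single factor, whereas for $i \in \sigma_1 \cap \sigma_2$ both projections are active and the nontriviality of the relevant product of zero-divisors on the single space $P^{n_i}$ is what is being exploited. Once this separation is established, a final application of $\TC \geq \zcl$ delivers the proposition.
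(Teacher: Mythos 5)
Your argument is correct and reaches the same conclusion, but the route differs from the paper's in a worthwhile way. The paper works directly in $[H^*(\und{P}^K)]^{\otimes 2}$: it uses the Stanley--Reisner presentation $H^*(\und{P}^K) \cong \bigotimes_i H^*(P^{n_i})/I_K$, takes the same zero-divisors you do (for $i\in\sigma_1\cap\sigma_2$ it specializes to the explicit choice $(u_i\otimes 1 + 1\otimes u_i)^{\zcl(P^{n_i})}$, which you leave abstract), expands all the binomial products, and isolates a single basis monomial that appears with coefficient $1$. You instead push the verification that the product is nonzero down to the retract $P^{\sigma_1}\times P^{\sigma_2}\subseteq \und{P}^K\times\und{P}^K$, where the K\"unneth isomorphism over $\Z/2$ makes the nonvanishing immediate: the three blocks ($i\in\sigma_1\setminus\sigma_2$ contributing $x_i^{n_i}\otimes 1$, $j\in\sigma_2\setminus\sigma_1$ contributing $1\otimes x_j^{n_j}$, and $k\in\sigma_1\cap\sigma_2$ contributing the nonzero product of zero-divisors in $H^*(P^{n_k})\otimes H^*(P^{n_k})$) live in disjoint tensor factors, so their product is a tensor of nonzero elements over a field and hence nonzero. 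This restriction step replaces the paper's bookkeeping of coefficients and of the generalized Stanley--Reisner relations, and the fact that you do not need the explicit $2$-adic formula $\zcl(P^n)=2^\theta-1$ makes your argument slightly more conceptual. One small clarification worth spelling out in a final write-up: the naturality point you invoke for $\pi_i\times\pi_i$ should be stated as $\Delta_{\und{P}^K}^*\circ(\pi_i\times\pi_i)^* = \pi_i^*\circ\Delta_{P^{n_i}}^*$, so the pullback of a zero-divisor is a zero-divisor; and the restriction to $P^{\sigma_1}\times P^{\sigma_2}$ sends $y_i$ to $x_i$ or $0$ precisely because $\pi_i|_{P^\sigma}$ is the coordinate projection when $i\in\sigma$ and the constant map otherwise.
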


Our proof of Theorem~\ref{Upper bound TC of Z(Pn,K)} does not seem to produce interesting results in the context of higher topological complexity. As an alternative, we prove:

\begin{theorem}\label{higherproj}
If all $n_i$ are even, then for large enough $r$,
$$
\TC_r(\und{P}^K)=r\,\cat(\und{P}^K)=\max_{\sigma=[i_1,\ldots,i_\ell]}\left\{\TC_{\red{r}}(P^{n_{i_1}})+\cdots + \TC_r(P^{n_{i_\ell}})\rule{0mm}{4mm}\right\},
$$
where the maximum is taken over all simplices $\sigma$ of $K$.
\end{theorem}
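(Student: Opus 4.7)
My plan is to deduce the theorem from \thmref{thm:polyTC} applied to the family $\und{P}=\{(\RP^{n_i},*)\}_{i=1}^m$; the task thus reduces to verifying its three hypotheses for $r$ sufficiently large.

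Hypothesis~(2), the LS-logarithmic property, holds unconditionally for real projective spaces: the subadditivity of $\cat$ under products is general, while the matching lower bound is supplied by the non-vanishing of the top mod~2 cup product $x_{i_1}^{n_{i_1}}\cdots x_{i_k}^{n_{i_k}}$ in $H^*(\RP^{n_{i_1}}\times\cdots\times \RP^{n_{i_k}};\mathbb{F}_2)$, which already yields $\cat\geq n_{i_1}+\cdots+n_{i_k}$.

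The central input is Hypothesis~(1): for every even $n$ and every $r\gg 0$ one has $\TC_r(\RP^n)=rn=r\cat(\RP^n)$. The upper bound $\TC_r\leq r\cat$ is general, so the work is to realize $rn$ as the mod~2 cup length of zero divisors of the $r$-fold diagonal $\Delta_r\colon\RP^n\to(\RP^n)^r$. In
$$H^*((\RP^n)^r;\mathbb{F}_2)=\mathbb{F}_2[x_1,\ldots,x_r]/(x_1^{n+1},\ldots,x_r^{n+1}),$$
the classes $x_i+x_j$ lie in $\ker\Delta_r^*$; when $n$ is even, Lucas' theorem makes sufficiently many binomial coefficients survive mod~2 that a carefully chosen product of such zero divisors remains non-zero all the way up to total degree $rn$. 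I expect this cup-length computation (equivalently, establishing the eventual stabilization $\TC_r(\RP^n)=rn$ for $n$ even) to be the main technical obstacle.

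Once Hypothesis~(1) is granted, Hypothesis~(3) ($\TC_r$-logarithmic) follows by an external cup-product argument. For any projection $\pi_j\colon\prod_\ell\RP^{n_{i_\ell}}\to\RP^{n_{i_j}}$, pulling back along $\pi_j^{\,r}\colon\bigl(\prod_\ell\RP^{n_{i_\ell}}\bigr)^r\to(\RP^{n_{i_j}})^r$ sends zero divisors of $\Delta_r|_{\RP^{n_{i_j}}}$ to zero divisors of $\Delta_r|_{\prod}$, and, by the Künneth theorem, the cup product of such externally-pulled-back zero divisors realizes the sum of the individual cup lengths. This gives
$$\TC_r\bigl(\textstyle\prod_j\RP^{n_{i_j}}\bigr)\;\geq\;\sum_j r\,n_{i_j}\;=\;\sum_j\TC_r(\RP^{n_{i_j}}),$$
matched above by the general subadditivity $\TC_r(X\times Y)\leq\TC_r(X)+\TC_r(Y)$. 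All three hypotheses being in force, \thmref{thm:polyTC} delivers the asserted triple equality.
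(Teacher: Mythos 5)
Your overall strategy matches the paper's exactly: reduce to Theorem~\ref{thm:polyTC} and verify its three hypotheses for the family of even-dimensional real projective spaces, with the lower bounds supplied by mod~2 zero-divisor cup-length. For Hypothesis~(2), your top cup-product argument is correct (the paper simply calls it obvious). For Hypothesis~(3), your external-product-of-zero-divisors argument is the same as the paper's chain of inequalities via \cite[Theorem~3.9]{BGRT}, \cite[Lemma~2.1]{FaCo}, and Theorem~\ref{thm:bounds}.

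The genuine gap is Hypothesis~(1), and you correctly flag it as the crux: the claim that $\TC_r(P^n)=rn$ for even $n$ and $r$ large enough is not something the present paper proves. It is cited, namely \cite[Equation (5.2) and Theorem~5.7]{CGGGL}, where the triple equality $\TC_r(P^n)=r\,\cat(P^n)=\zcl_r(P^n)$ is established. Your Lucas-theorem heuristic points in the right direction (the computation there really does live or die on mod~2 binomial coefficients), but ``carefully chosen product of zero divisors remains nonzero'' is not an argument: one must exhibit an explicit nonzero product of $r$-fold zero divisors of total degree $rn$ in $\mathbb{F}_2[x_1,\ldots,x_r]/(x_1^{n+1},\ldots,x_r^{n+1})$, and determining the threshold on $r$ (which is where ``$r$ large'' enters) is nontrivial. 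As submitted, your proof therefore has a real hole at exactly the step you identify; filling it requires either reproducing or citing the $\zcl_r$ computation from \cite{CGGGL}.
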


\section{Background}\label{sec:request}
Topological complexity $\TC(X)$, a homotopy invariant of a path-connected space $X$, was introduced in \cite{Far} motivated by topological aspects of the motion planning problem in robotics. %The number $\TC(X)$ gives a quantitative measure of the `navigational complexity' of $X$, when viewed as the configuration space of a mechanical system.
Topological complexity is a close relative of the Lusternik--Schnirelmann (LS) category $\cat(X)$, and both are special cases of Schwarz' sectional category of a map. We review basic definitions, referring the reader to the monographs \cite{CLOT, Inv, Sv,Rud} for further details.

%Recall that  $\cat(X)$ is the smallest $n$ such that $X$ admits an open covering $\{ U_0, \ldots, U_n \}$ by $n+1$ sets, each of which is contractible in $X$. 
The sectional category of a fibration $p \colon E \to B$, denoted by $\secat(p)$, is the smallest $n$ for which there is an open covering $\{ U_0, \ldots, U_n \}$ of $B$ by $n+1$ open sets, on each of which there is a continuous local section $s_i \colon U_i \to E$ of  $p$, i.e., $p\circ s_i = j_i \colon U_i \hookrightarrow B$. We agree to set  $\secat(p)=\infty$ if the corresponding finite coverings fail to exist. The \emph{Lusternik-Schnirelmann category} is then defined to be $\cat(X)=\secat(PX \to X)$, where $PX$ is the space of \emph{based} paths in $X$ and the map $PX \to X$ takes a path to its endpoint.

The \emph{topological complexities} of a space $X$, denoted $\tc_r(X)$ for $r\geq 2$ are defined by $\tc_r(X)=\secat(p_r)$ where $p_r \colon X^I\to X^r$ is the fibration given by
\begin{eqnarray}
p_r(\alpha) = \left(\alpha(0), \alpha\left(\frac{1}{r-1}\right), \alpha\left(\frac{2}{r-1}\right), \dots, \alpha\left(\frac{r-2}{r-1}\right), \alpha(1)\right),
\end{eqnarray}
\red{for $\alpha\in X^I.$} Here $X^I$ denotes the space of free paths $\alpha: I=[0, 1]\to X$ and $X^r=X\times \dots\times X$,  the Cartesian product of $r$ copies of $X$.
When $r>2$, we say that the $\tc_r(X)$ are \emph{higher} or \emph{sequential} topological complexities of $X$ and when $r=2$, we say that $\tc(X)$ is simply \emph{the}
topological complexitiy of $X$.

%Let $X^I$ be the space of free paths in a path-connected space $X$. 
%The topological complexity $\TC(X)$ of $X$ is  $\secat\big( \pi_X\big)$ of the fibration $\pi_X\colon X^I \to X\times X$ which evaluates a path $\alpha \in X^I$
%at both its initial and final points, $\pi_X(\alpha) = (\alpha(0), \alpha(1))$. Thus, $\TC(X)$ is the smallest number $n$ for which there is an open cover 
%$\{ U_0, \ldots, U_n \}$ of $X \times X$ by $n+1$ open sets, for each of which there is a continuous section $s_i \colon U_i \to X^I$ of  $\pi_X$,  
%$\pi_X\circ s_i = j_i \colon U_i \hookrightarrow X \times X$. The map $s_i \colon U_i \to X^I$ is called a motion planner: if $X$ is the configuration space of a system, 
%$s_i$ continuously chooses a path between any given pair $(x_0,x_1)\in U_i$ of initial-final configurations. %If one asks for a global motion planner on all of $X$, 
%then it can be proved that $X$ must be contractible. Since most configuration spaces are \emph{not} contractible, $\TC(X)$ provides a definite lower bound for the 
%number of separate modules needed in any algorithm seeking to be a motion planner on $X$. Of course such algorithms will be needed by any autonomous 
%(robotic) system outside the immediate control of humans (e.g. in deep space).

Just as LS category is very difficult to compute, so also is topological complexity. Indeed, it is usually the case for both invariants that
lower and upper bounds are derived. The fundamental such bounds are:

\begin{theorem}\label{thm:bounds}
For any space $X$, $\cat(X^{r-1}) \leq \TC_r(X) \leq \cat(X^r)$.
\end{theorem}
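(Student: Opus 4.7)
The plan is to treat the two inequalities separately, both via standard manipulations of sectional category, following the approach of Schwarz.

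For the upper bound $\TC_r(X)\le \cat(X^r)$, I would first realize $\cat(X^r)=\secat(\mathrm{ev}\colon P(X^r)\to X^r)$, identifying $P(X^r)$ with the space of $r$-tuples $(\alpha_1,\ldots,\alpha_r)$ of based paths in $X$ at the diagonal basepoint $(x_0,\ldots,x_0)$ and $\mathrm{ev}$ with $(\alpha_1,\ldots,\alpha_r)\mapsto(\alpha_1(1),\ldots,\alpha_r(1))$. The key step is to construct a continuous map $\Phi\colon P(X^r)\to X^I$ that assembles such an $r$-tuple into a single free path $\beta$ by concatenating, on each subinterval $[(j-1)/(r-1),j/(r-1)]$, the reverse path $\overline{\alpha_j}$ (from $\alpha_j(1)$ to $x_0$) followed by $\alpha_{j+1}$ (from $x_0$ to $\alpha_{j+1}(1)$), each traversed at double speed. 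By construction $\beta$ takes the prescribed values $\alpha_1(1),\ldots,\alpha_r(1)$ at the $r$ sample points $0,1/(r-1),\ldots,1$, so $p_r\circ\Phi=\mathrm{ev}$. Post-composing any local section of $\mathrm{ev}$ over an open $U\subset X^r$ with $\Phi$ therefore yields a local section of $p_r$ over $U$, giving the inequality $\TC_r(X)\le \cat(X^r)$.

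For the lower bound $\cat(X^{r-1})\le\TC_r(X)$, I would pull $p_r$ back along the inclusion $\iota\colon X^{r-1}\hookrightarrow X^r$ defined by $(x_2,\ldots,x_r)\mapsto(x_0,x_2,\ldots,x_r)$. Monotonicity of $\secat$ under pullback gives $\secat(\iota^*p_r)\le \secat(p_r)=\TC_r(X)$. The total space of $\iota^*p_r$ is naturally identified with the based-path space $\{\alpha\in X^I:\alpha(0)=x_0\}$, which deformation retracts to the constant path at $x_0$ and is therefore contractible. Invoking the classical fact that a fibration with contractible total space over a path-connected base $B$ is fiber-homotopy equivalent to the based path fibration $PB\to B$, and hence has sectional category equal to $\cat(B)$, this identifies $\secat(\iota^*p_r)$ with $\cat(X^{r-1})$, completing the argument.

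The only mildly technical point is verifying continuity of $\Phi$ across the reparameterization boundaries and checking that the values at the $r$ sampling times come out correctly; both are routine once the formula is set. The two structural tools—monotonicity of $\secat$ under pullback, and its identification with LS category when the total space is contractible—are classical and can be cited from \cite{Sv} or \cite{CLOT}.
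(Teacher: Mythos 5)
The paper does not prove this statement; it cites it as known (from Rudyak). Your proof is correct and self-contained, so there is nothing in the paper to compare line-by-line against, but a few comments are in order.

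Both halves of your argument follow the standard sectional-category template. For the upper bound, the more common route is to invoke the general inequality $\secat(p)\leq \cat(B)$ for any Hurewicz fibration $p\colon E\to B$ with path-connected $E$: a categorical open set $U\subset X^r$ admits a contraction in $X^r$, which lifts through $p_r$ by the homotopy lifting property, and the time-$0$ end of the lift is a local section. Your explicit concatenation map $\Phi$ is a perfectly good alternative that avoids the HLP and gives a concrete motion planner, but you should note that the two arguments are essentially dual: you push local sections of the path fibration forward through $\Phi$, whereas the standard argument pulls categorical contractions back through $p_r$.

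For the lower bound, your pullback along $\iota\colon X^{r-1}\hookrightarrow X^r$ is the right move, and the identification of the total space of $\iota^*p_r$ with $P_{x_0}X$ is correct. However, the appeal to fiber-homotopy equivalence with the based path fibration of $X^{r-1}$ is heavier machinery than needed, and it does impose niceness hypotheses (CW homotopy type, etc.) on $X^{r-1}$ that the bare statement ``for any space $X$'' does not grant. A cleaner closing step that avoids this: if $s\colon U\to \iota^*E$ is a local section of $\iota^*p_r$ over $U\subset X^{r-1}$, choose a contraction $H$ of the contractible total space $\iota^*E$ to a point $e_0$; then $(\iota^*p_r)\circ H\circ(s\times\mathrm{id})$ is a homotopy from the inclusion $U\hookrightarrow X^{r-1}$ to the constant map at $(\iota^*p_r)(e_0)$, so $U$ is categorical in $X^{r-1}$. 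Thus $\cat(X^{r-1})\leq\secat(\iota^*p_r)\leq\secat(p_r)=\TC_r(X)$, with no appeal to Dold's theorem or fiber-homotopy classification. I would recommend this rewrite, as it is shorter, more elementary, and genuinely assumption-free.
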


When $X=K(\pi,1)$ is aspherical the topological complexities $\TC_r(X)$ depend only on $\pi$ and we may write $\TC_r(X)=\TC_r(\pi)$.
It is easy to see (using the Eilenberg - Ganea theorem \cite{EG}) that  $\TC_r(\pi)$ is finite if and only if there exists a finite dimensional $K(\pi,1)$.
Various results about $\TC_r(\pi)$ have been obtained (e.g.~\cite{FGLO,FM,GLO}), but there is no definitive result at this time.

\subsection{Higher topological complexity of RAA groups}\label{subsec:highRAA}
We use the notation set forth in Section~\ref{sec:preliminaries} regarding RAA groups. The higher topological complexity $\tc_r(H_\Gamma)$ was computed in~\cite{GGY} in terms of the structure of cliques in $\Gamma$. The answer is spelled out in Theorem \ref{GGY} below in terms of the simplified expression in~\cite{FO}.

\begin{definition}\label{def:zr}
For an integer $r\ge 2$ and a graph $\Gamma$, we define the number $z_r(\Gamma)$ as the maximal total cardinality $\sum_{i=1}^r |C_i|$ of $r$ cliques $C_1, \dots, C_r$ of $\,\Gamma$ with empty total intersection, $\cap_{i=1}^r C_i=\varnothing$.
\end{definition}

%As shown in~\cite{GGY}, $z_r(\Gamma) = \max\left\{\sum_{i=1}^r |C_i| - |\cap_{i=1}^r C_i |\right\}$, where $C_1,\ldots,C_r$ run over all sequences of $r$ cliques in $\Gamma$. The latter equality also holds if the max expression is taken over cliques of $\Gamma$ that are maximal in the sense that they are not contained in any other clique of $\Gamma$. However,
%as illustrated by Figure~\ref{fig:nomaxclique},
The maximum sum \red{in Definition~\ref{def:zr}} giving the value of $z_r(\Gamma)$ might not be realized by cliques whose cardinality is maximal.

%\begin{figure}[h]
%\centering
%\begin{tikzpicture}
%% vertices
%\draw[fill=black] (0,0) circle (3pt);
%\draw[fill=black] (2,0) circle (3pt);
%\draw[fill=black] (-2,1) circle (3pt);
%\draw[fill=black] (4,1) circle (3pt);
%\draw[fill=black] (2,2) circle (3pt);
%\draw[fill=black] (0,2) circle (3pt);
%% vertex labels
%\node at (-1,1) {$C_1$};
%\node at (3,1) {$C_2$};
%\node at (1,1.3) {$C$};
%%% edges
%\draw[thick] (0,0) -- (-2,1) -- (0,2) -- (0,0) -- (2,2);
%\draw[thick] (0,0) -- (2,0) -- (4,1) -- (2,2) -- (2,0) -- (0,2);
%\draw[thick] (0,2) -- (2,2);
%\end{tikzpicture}
%\caption{$\TC(H_\Gamma)=|C_1|+|C_2|=6$.}\label{fig:nomaxclique}
%\end{figure}

\begin{theorem}[\cite{FO},\cite{GGY}]\label{GGY} For any graph $\Gamma$ and $r\geq2$, $\tc_r(H_\Gamma) = z_r(\Gamma)$.
\end{theorem}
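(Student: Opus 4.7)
The plan is to establish matching lower and upper bounds, $\TC_r(H_\Gamma)\geq z_r(\Gamma)$ and $\TC_r(H_\Gamma)\leq z_r(\Gamma)$.

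For the lower bound, I would use the mod~2 zero-divisor cup-length estimate for higher $\TC$. The cohomology ring $H^*(K(H_\Gamma,1);\Z/2)$ is the exterior face algebra of the clique complex of $\Gamma$: it is generated in degree one by classes $e_v$ indexed by $V_\Gamma$, subject to the relations $e_v^2=0$ and $e_u e_v=0$ whenever $\{u,v\}$ is not an edge of $\Gamma$; a $\Z/2$-basis is given by the monomials $\prod_{v\in C}e_v$ as $C$ ranges over the cliques of $\Gamma$. Fix cliques $C_1,\dots,C_r$ with $\bigcap_{i=1}^r C_i=\varnothing$ realizing $z_r(\Gamma)=\sum_i|C_i|$. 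The empty-intersection hypothesis allows me to pick, for each vertex $v\in\bigcup_i C_i$, an index $\alpha(v)\in[r]$ with $v\notin C_{\alpha(v)}$. For every pair $(i,v)$ with $v\in C_i$ and $i\neq\alpha(v)$, the class $\bar e^{(i)}_v:=e^{(i)}_v+e^{(\alpha(v))}_v$ in $H^*(K(H_\Gamma,1)^r;\Z/2)$ is a $\TC_r$-zero-divisor, because the thin diagonal $K(H_\Gamma,1)\to K(H_\Gamma,1)^r$ pulls $e^{(j)}_v$ back to $e_v$ regardless of $j$. The key computation is then to show that the product of these zero-divisors is nonzero in total degree $z_r(\Gamma)$, which I would verify by expanding in the tensor-basis of $H^*(K(H_\Gamma,1)^r)=H^*(K(H_\Gamma,1))^{\otimes r}$ and isolating the ``leading'' summand $\prod_i\prod_{v\in C_i}e^{(i)}_v$: this summand is nonzero because each tensor factor $\prod_{v\in C_i}e_v$ is a basis element, and it admits no cancelling partner because any alternative summand sends some $e_v$ into a factor $C_{\alpha(v)}$ not containing~$v$, and so cannot match the leading monomial there.

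For the upper bound, I would exploit the Salvetti-complex model of $K(H_\Gamma,1)$ as the union of the subtori $T^C\subset T^{V_\Gamma}$ indexed by the cliques of $\Gamma$; the product $K(H_\Gamma,1)^r$ is correspondingly a union of the strata $T^{C_1}\times\cdots\times T^{C_r}$. A motion-planning open cover of $K(H_\Gamma,1)^r$ can be assembled from the standard covers of the factors $T^{C_i}$, and an inductive argument on the clique stratification shows that the number of open sets needed is controlled precisely by the largest cardinality $\sum_i|C_i|$ over tuples $(C_1,\dots,C_r)$ whose total intersection is empty, namely by $z_r(\Gamma)+1$. Alternatively, one can bootstrap the bound from the subadditivity of $\TC_r$ on products together with an explicit decomposition of the Salvetti complex into torus pieces.

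The main obstacle is the non-cancellation argument underlying the lower bound: expanding the product of $z_r(\Gamma)$ zero-divisors produces $2^{z_r(\Gamma)}$ monomials a priori, and one must show that the assignment $v\mapsto\alpha(v)$ can be coordinated---precisely via the empty-intersection condition $\bigcap_i C_i=\varnothing$ of Definition~\ref{def:zr}---so that the distinguished top monomial survives with odd multiplicity in $\Z/2$. The simplification noted in \cite{FO} packages this combinatorial bookkeeping cleanly, replacing the more elaborate cohomological computation of \cite{GGY}.
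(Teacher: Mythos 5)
The paper states Theorem~\ref{GGY} by citing \cite{FO} and \cite{GGY} and gives no proof of its own, so there is no internal argument to compare against; the comparison is with those references.

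Your lower bound is correct and is essentially the zero-divisor cup-length argument used in the references. The non-cancellation claim holds: any term of the expansion that picks $e^{(\alpha(v))}_v$ rather than $e^{(i)}_v$ for some pair $(i,v)$ places $e_v$ in tensor slot $\alpha(v)$, which is indexed by a clique $C_{\alpha(v)}$ not containing $v$; the resulting basis monomial (if it is nonzero at all) therefore differs from the leading term $\bigotimes_i\prod_{v\in C_i}e_v$ in slot $\alpha(v)$, so the leading term appears with coefficient $1$ in $\Z/2$ and the product is nonzero. This gives $\TC_r(H_\Gamma)\geq z_r(\Gamma)$.

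The upper bound is only a plan, and the substance of the theorem lies precisely in the steps you have not supplied. You assert that an ``inductive argument on the clique stratification'' yields a $(z_r(\Gamma)+1)$-element open cover, but assembling local motion planners on the strata $T^{C_1}\times\cdots\times T^{C_r}$ into a global cover of $K(H_\Gamma,1)^r$ requires the extend-to-$(k+1)$-cover-and-glue machinery (developed for $\cat$ in Lemma~\ref{lem:opencov} and Remark~\ref{rem:catcov}, and for $\TC_r$ of polyhedral products in~\cite{GGY}), \emph{together with} the fact that on the common intersection $\bigcap_iC_i$ one may use a $\TC_r$-style planner (for $S^1$ this saves one set per coordinate compared with $\cat$), while the remaining coordinates are moved $\cat$-style to and from the basepoint. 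That mixed bookkeeping is what produces the count $z_r(\Gamma)+1$ rather than $r\,c(\Gamma)+1$; it is not automatic, and in fact Theorem~\ref{thm:polyTC} of this paper deliberately excludes it by requiring $\TC_r(X_i)=r\,\cat(X_i)$ (which fails for $X_i=S^1$). Your suggested alternative via ``subadditivity of $\TC_r$ on products'' does not apply either: the Salvetti complex is a union of tori, not a product, and $\TC_r$ has no clean subadditivity for unions.
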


\section{$\tc$-generating function and LS-logarithmicity}\label{sec:sequence}
A number of examples of finite CW complexes $X$ were given in \cite{FO} for which $P_X(x)$ is a polynomial satisfying in fact $P_X(1)=\cat(X)$ (cf.~Theorem~\ref{thm:genequiv}(a)). Real projective spaces $P^n$ give additional such examples provided $n$ is a 2-power. Indeed,~\cite[Proposition~6.2]{CGGGL} implies $P_{P^{2^e}}(x)=(2^{e+1}-1)x -2(2^{e-1}-1)x^2-x^3$, so $P_{P^{2^e}}(1)=2^e=\cat(P^{2^e})$. Note that all these examples deal with LS-logarithmic spaces, i.e., spaces $X$ satisfying\footnote{Of course, we always have $\cat(X^r)\leq r\cdot \cat(X)$.} $\cat(X^r)=r\cdot \cat(X)$ for any~$r$. In fact, in this short section we prove that whether a space is LS-logarithmic has a great influence on the structure of its $\TC$-generating function.

\begin{proposition}\label{cor:catdiff}
If $X$ is LS-logarithmic and $\TC_{r+1}(X)-\TC_r(X)=K$
for all $r \geq n$, then $K = \cat(X)$. Consequently $P_X(1)=\cat(X)$.
\end{proposition}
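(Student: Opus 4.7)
The plan is very short: squeeze $\TC_r(X)$ between two linear functions of $r$ whose slopes coincide under LS-logarithmicity, and then extract the slope $K$ from the eventual arithmetic progression.

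First, I would invoke the standard bounds of \thmref{thm:bounds}, namely
$$\cat(X^{r-1}) \leq \TC_r(X) \leq \cat(X^r).$$
The LS-logarithmicity hypothesis gives $\cat(X^s) = s\cdot\cat(X)$ for every $s\geq 1$, so the above inequalities become
$$(r-1)\cdot\cat(X) \leq \TC_r(X) \leq r\cdot\cat(X).$$
Dividing by $r$ and letting $r\to\infty$, one concludes
$$\lim_{r\to\infty}\frac{\TC_r(X)}{r} = \cat(X).$$

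Next, I would use the assumption $\TC_{r+1}(X) - \TC_r(X) = K$ for all $r\geq n$. Iterating, $\TC_r(X) = \TC_n(X) + (r-n)K$ for every $r\geq n$, so $\TC_r(X)/r \to K$ as $r\to\infty$. Comparing the two limits yields $K = \cat(X)$, which is the first assertion.

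For the final sentence, I would appeal to \thmref{thm:genequiv}(a): since the eventual common difference of the sequence $\{\TC_r(X)\}$ is exactly $K_X = P_X(1)$, the equality $K=\cat(X)$ just established gives $P_X(1)=\cat(X)$. There is no real obstacle here; the only mildly delicate point is recognizing that the sandwich estimate, combined with logarithmicity, forces the asymptotic slope of $\TC_r(X)$ to equal $\cat(X)$, after which the eventual linearity hypothesis pins the slope down to $K$.
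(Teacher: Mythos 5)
Your proof is correct, and it takes a genuinely different route from the paper's. The paper sets $s_r := \TC_{r+1}(X) - r\cat(X)$, observes that the sandwich $(r)\cat(X) \geq \TC_{r+1}(X) \geq r\cat(X) \geq \TC_r(X)$ (really, $(r+1)\cat(X)\geq \TC_{r+1}(X)$ and $\TC_r(X)\leq r\cat(X)$) forces $0 \leq s_r \leq \cat(X)$, and then computes $s_r - s_{r+1} = \cat(X) - K$; if this difference were nonzero the integer sequence $s_n, s_{n+1},\ldots$ would be strictly monotone, contradicting boundedness. You instead pass to the limit of $\TC_r(X)/r$: the sandwich gives limit $\cat(X)$, while the eventual arithmetic-progression hypothesis gives limit $K$, so $K=\cat(X)$. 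The two arguments are logically equivalent (both rest on the same sandwich from \thmref{thm:bounds} plus LS-logarithmicity), but yours is slightly more transparent because it identifies the common "asymptotic slope" directly, whereas the paper's argument extracts the same information via a bounded-monotone contradiction without mentioning limits. The appeal to \thmref{thm:genequiv}(a) for the final sentence is the same in both.
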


\begin{proof} 
For $r\geq n$, Theorem~\ref{thm:bounds} and the LS-logarithmicity hypothesis give
\begin{equation}\label{fid}
\overunderbraces{&\br{2}{K-s_{r+1}}& &\br{2}{K-s_r}}{&(r+1)\cat(X) \geq &\TC_{r+1}(X)& \geq &r\cat(X)& \geq \TC_r(X)}{&&\br{3}{s_r}},
\end{equation}
where $s_r:=\TC_{r+1}(X)-r\cat(X)$, so that $r\cat(X) - \TC_r(X) = K-s_r$, since $\TC_{r+1}(X)-\TC_r(X)=K$. Note that
\begin{align*}
s_r - s_{r+1} & = \left(\rule{0mm}{4mm}s_r-\TC_{r+1}(X)\right) + \left(\rule{0mm}{4mm}\TC_{r+1}(X)-s_{r+1}\right)\\
& =- r\cat(X)+(r+1)\cat(X)-K = \cat(X)-K.
\end{align*}
The latter expression is independent of $r$, so the relation $K\neq\cat(X)$ would imply that the sequence of integers $s_n,s_{n+1},\ldots$ is either strictly increasing or strictly decreasing, which is impossible as $0\leq s_r\leq\cat(X)$ for $r\geq n$.
\end{proof}

\begin{example}
\red{Theorem~5.7 in \cite{CGGGL} shows $\TC_r(P^m)=rm$ for $m$ even and~$r$ (moderately) large. Since $P^m$ is LS-logarithmic, Proposition~\ref{cor:catdiff} implies $P_{P^m}(1)=m$, even though the polynomial $P_{P^m}(x)$ is unknown. (Recall from~\cite{FTY} that $\TC(P^m)$ is the Euclidean immersion dimension of $P^m$.)}
\end{example}

If the LS-logarithmicity hypothesis in Proposition~\ref{cor:catdiff} is removed, the conclusion $P_X(1)=\cat(X)$ may be lost. Indeed, a remarkable 3-cell complex $X$ is constructed in \cite{Sta} with the property that $\cat(X \times X)=\cat(X)=2$. In \cite{FKS}, the higher topological complexities of $X$ were shown to be $\TC_r(X)=r=\cat(\red{X^r})$ \red{($r\geq2$)}. In view of Theorem~\ref{thm:genequiv}, this yields $P_X(1)=1<\cat(X)$. It should be noted that, in this example, $P_X(1)=\cupl(X)$, the cup-length of $X$, a situation that, to the best of our knowledge, holds in all cases where all $\TC_r(X)$ are known. It is tempting to insert this clause in the modified form of the rationality conjecture at the end of~\cite{FKS}.

\section{$\TC_r$ and coverings of Right-Angled Artin Groups}\label{sec:examples}
RAA groups can be used to gain insight into various phenomena in the study of topological complexity. For instance, in \cite{Rud2}, it was shown that
$\TC(T^k \vee T^\ell)=k+\ell$ and $\cat(T^k \vee T^\ell)=k$
provided $k\geq\ell$, where $T^r=S^1\times\cdots\times S^1$ is the $r$-dimensional torus. This showed that TC could lie anywhere in the range prescribed by Theorem~\ref{thm:bounds}. Such a fact has a simple explanation, and can be generalized to higher-$\TC$, % (recall Theorem~\ref{thm:bounds}),
simply by noticing that $T^k\vee T^\ell=K(\Z^k * \Z^\ell,1)$ and that $\Z^k *\Z^\ell$ is the right-angled Artin group associated to the graph consisting of a $k$-clique and a disjoint $\ell$-clique. For then Theorems~\ref{thm:polycat} and~\ref{GGY} yield, for any $r\geq2$,
\begin{equation}\label{wedgeoftori}
\TC_r(T^k\vee T^\ell)=(r-1)k+\ell\ \ {\rm and }\ \ \cat((T^k\vee T^\ell)^{r-1})=(r-1)k.
\end{equation}
The above philosophy is applied in this section to explain on combinatorial grounds, and generalize to the higher TC realm, Dranishnikov's example of a covering space $E\to B$ with $\TC(E)>\TC(B)$. In fact, from this perspective it is clear that such a phenomenon is more common than it was originally thought.

We use the notation set forth in Section~\ref{sec:preliminaries} regarding RAA groups. The double of a graph $\Gamma$ around one of its vertices $v$, denoted by $D_v(\Gamma)$, is formed by taking two copies of $\Gamma$ and identifying the copies of $\mathrm{St}(v)$ in each. Here, the star of $v$, $\mathrm{St}(v)$, is the induced subgraph of $\Gamma$ containing $v$ and all vertices connected to $v$ by an edge. Figures~\ref{fig:dranish} and~\ref{fig:graph2} illustrate the construction.

\begin{proof}[Proof of Theorem~\ref{thm:TCdouble}] 
\hspace{.3mm}By the hypothesis and the construction of $D_v(\Gamma)$, we see that $D_v(\Gamma)$ contains two disjoint cliques \red{of maximal cardinality,} $C$ and $C'$ (where the prime denotes inclusion in the second copy of $\Gamma$, $\Gamma'$). Since no other clique has larger size, we get the first assertion of the theorem, i.e.,
$$\TC_r(D_v(\Gamma)) = |C'| + (r-1)|C| = r|C| = r\cdot \cd(H_\Gamma)=r\cdot \cd(H_{D_v(\Gamma)}),$$
while the expression for $P_{D_v(\Gamma)}(x)$ follows directly from Theorem~\ref{thm:genequiv}. On the other hand, for cliques $C_i$ as in the last assertion of the theorem, the condition \red{$C_1 \cap C_2 \cap V_{\mathrm{St}(v)}=\varnothing$} implies $C_1 \cap C_2' = \varnothing$ (where, again, primes denote copies in the copy of $\Gamma$, $\Gamma' \subset D_v(\Gamma)$). So
$$\TC_{\red{r}}(D_v(\Gamma)) \geq \red{(r-1)}|C_1|+|C_2'| = \red{(r-1)|C_1|+|C_2|} > \TC_{\red{r}}(\Gamma),$$ which completes the proof.
\end{proof}

%\begin{example}\label{exam:dranishRAA}
%In \figref{fig:dranish}, we see the graph $\Gamma$ and its double around $v$, $D_v(\Gamma)$. These give the RAA groups
%$\Z * \Z^2$ and $\Z^2 * \Z * \Z^2$ as in Example \ref{exam:dranishcovering}. Then, as before, we see that
%$\TC_r(H_\Gamma)=3+(r-2)2=2r-1$ and $\TC_r(H_{D_v(\Gamma)})=4+(r-2)2=2r$. Note that we also have that
%$\TC_r(H_{D_v(\Gamma)})=2r=r\cdot\cd(H_{D_v(\Gamma)})$ is the maximum since the maximal clique is a $2$-clique.
%\end{example}
Dranishnikov's example in~\cite[Theorem~3.8(a)]{Dra} of a double covering space $p\colon E\simeq T^2\vee S^1\vee T^2\to B=S^1 \vee T^2$ with $\TC(E)>\TC(B)$ arises from the graph $\Gamma$ \red{in Figure~\ref{fig:dranish}} given by a $2$-clique $C$ and a disjoint vertex $v$. In this case Theorem~\ref{thm:TCdouble} is applied with  $C_1=C_2=C$. Note that $P_B(x)=3x-x^2$ and $P_{E}(x)=4x-2x^2$, both of degree 2.

\begin{figure}[h!]
\centering
\begin{tikzpicture}
%% vertices
\draw[fill=black] (0,0) circle (3pt);
\draw[fill=black] (1,0) circle (3pt);
\draw[fill=black] (1,1) circle (3pt);
%% vertex labels
\node at (0,-0.5) {$v$};
\node at (1,-0.5) {$C$};
\node at (-1,0.5) {$\Gamma$};
%%% edges
\draw[thick] (1,0) -- (1,1);
\end{tikzpicture}
\hspace{.7in}
\begin{tikzpicture}
%% vertices
\draw[fill=black] (0,0) circle (3pt);
\draw[fill=black] (1,0) circle (3pt);
\draw[fill=black] (1,1) circle (3pt);
\draw[fill=black] (-1,0) circle (3pt);
\draw[fill=black] (-1,1) circle (3pt);
%% vertex labels
%\node at (0,-0.5) {$v$};
\node at (1,-0.5) {$C$};
\node at (-1,-0.5) {$C'$};
\node at (0,-0.5) {$v$};
\node at (2.5,0.5) {$D_v(\Gamma)$};
%%% edges
\draw[thick] (1,0) -- (1,1);
\draw[thick] (-1,0) -- (-1,1);
\end{tikzpicture}
\caption{$\TC_r(\Gamma)=\red{2r-1}\hspace{.6mm}$ and $\,\TC_r(D_v(\Gamma))=\red{2r}$.}\label{fig:dranish}
\end{figure}

Wedges in Dranishnikov's example correspond to disconnected graphs (so that the associated RAA groups decompose as free products). But this is an unnecessary restriction. Consider for instance the situation in \figref{fig:graph2}, where
$$\TC_r(D_v(\Gamma))=3r-1 > 3r-2 = \TC_r(\Gamma),$$
for all $r\geq 2$ in view of Theorem~\ref{GGY}. Note that Theorem~\ref{thm:TCdouble} is used with $C_2=C_1\setminus\mbox{St}(v)$. This time $P_{\Gamma}(x)=4x-x^2$ and $P_{D_v(\Gamma)}(x)=5x-2x^2$.

\smallskip
More interesting is the situation for the graph $\Gamma$ shown in \figref{fig:graph1}, for only the regular topological complexity (i.e., $\TC_2$) of the corresponding covering space is strictly larger than that of the base space. We leave it to the reader to show that, while $\TC(D_v(\Gamma)) = 8$ and $\tc(\Gamma)=7$, for $r>2$, we have $\TC_r(D_v(\Gamma))=\TC_r(\Gamma)$.
This phenomenon reflects the fact that the hypotheses in the second half of Theorem~\ref{thm:TCdouble} fail for $C_1$ and $C_2$ if $r\geq3$. Note that in this case $P_{\Gamma}(x)=7x-x^2-x^3$ while $P_{D_v(\Gamma)}(x)=8x-3x^2$, a smaller degree polynomial.

%Indeed, the central $5$-clique~$C$ \blue{has maximal cardinality}, but intersects both $4$-cliques $C_1$ and
%$C_2$ \red{in two vertices,} so using it in a sum necessitates using only a $2$-clique disjoint from~$C$. This gives a sum of $7$, which is obtained by 
%using (say) $C_1$ and a disjoint $3$-clique. So $\TC(\Gamma)=7$ and, in particular, $C_1$ and $C_2$ satisfy the conditions in Theorem \ref{thm:TCdouble} 
%\red{for $r=2$.} Hence $\TC(D_v(\Gamma))>7$. In fact $\TC(D_v(\Gamma)) = 8$, because $C=\mathrm{St}(v)$, so that all sums using $C$ in the double 
%$D_v(\Gamma)$ are less than or equal to $7$ as before.

\begin{figure}[h!]
\centering
\begin{tikzpicture}
%% vertices
\draw[fill=black] (0,0) circle (3pt);
\draw[fill=black] (0,1) circle (3pt);
%\draw[fill=black] (-2,0) circle (3pt);
%\draw[fill=black] (-1,1) circle (3pt);
\draw[fill=black] (2,1) circle (3pt);
%\draw[fill=black] (-1,2) circle (3pt);
\draw[fill=black] (1,2) circle (3pt);
%\draw[fill=black] (0,3) circle (3pt);
%% vertex labels
\node at (0,-0.5) {$v$};
\node at (1,1.35) {$C_1$};
\node at (-1,1) {$\Gamma$};
%\node at (0,-0.5) {$u$};
%\node at (-2.6,1) {$C_1$};
%\node at (2.6,1) {$C_2$};
%\node at (0,2.35) {$C$};
%%% edges
\draw[thick] (0,0) -- (0,1) -- (2,1) -- (1,2) -- (0,1);
%\draw[thick] (0,0) -- (2,0) -- (1,1) -- (0,0);
%\draw[thick,bend left=120] (0,0) -- (-1,2) ;
%\draw[thick] (0,0) to[out=-150,in=-150, distance=3cm ] (-1,2);
%\draw[thick] (-1,2) -- (-2,0) -- (-1,1) -- (1,1) -- (2,0) -- (1,2) -- (1,1) -- (-1,2) -- (1,2) -- (-1,1) -- (0,3) -- (1,1) ;
%\draw[thick] (0,3) -- (-1,2) -- (-1,1);
%\draw[thick] (0,3) -- (1,2);
%\draw[thick, bend right=120] (0,0) -- (1,2);
%\draw[thick] (0,0) to[out=-30,in=-30, distance=3cm ] (1,2);
\end{tikzpicture}
\hspace{.5in}
\begin{tikzpicture}
%% vertices
\draw[fill=black] (0,0) circle (3pt);
\draw[fill=black] (0,1) circle (3pt);
\draw[fill=black] (-2,1) circle (3pt);
\draw[fill=black] (-1,2) circle (3pt);
\draw[fill=black] (2,1) circle (3pt);
\draw[fill=black] (1,2) circle (3pt);
%vertex labels
\node at (0,-0.5) {$v$};
\node at (3.3,1) {$D_v(\Gamma)$};
%%% edges
\draw[thick] (0,0) -- (0,1) -- (2,1) -- (1,2) -- (0,1) -- (-1,2) -- (-2,1) -- (0,1);
\end{tikzpicture}
\caption{$\TC_r(\Gamma)=\red{3r-2}$ and $\TC_r(D_v(\Gamma))=\red{3r-1.}$}\label{fig:graph2}
\end{figure}
%\noindent If we look at $\TC_3$ however, we see something different. For $\Gamma$, if we use two copies of $C$, then we can only use a $2$-clique
%for the third clique in a sum. This gives a sum of $12$. However, we can take $C$, $C_1$ and $C_2$ since they have empty total intersection, so 
%$\TC_3(\Gamma)=|C|+|C_1|+|C_2| = 13$. The exact same argument holds for the double $D_v(\Gamma)$, for the extra primed copies give no advantage 
%since we are only looking for empty total intersection. Thus $\TC_3(D_v(\Gamma))= 13$ as well. From this point on, all $\TC_r$'s are the same for 
%$\Gamma$ and $D_v(\Gamma)$ since the maximal clique $C$ may be used at each step. This phenomenon reflects the fact that the hypotheses in the 
%second half of Theorem~\ref{thm:TCdouble} fail for $C_1$ and $C_2$ if $r\geq3$. Note that in this case $P_{\Gamma}(x)=7x-x^2-x^3$ while 
%$P_{D_v(\Gamma)}(x)=8x-3x^2$, a smaller degree polynomial.

\begin{figure}[h!]
\centering
\begin{tikzpicture}
%% vertices
\draw[fill=black] (0,0) circle (3pt);
\draw[fill=black] (2,0) circle (3pt);
\draw[fill=black] (-2,0) circle (3pt);
\draw[fill=black] (-1,1) circle (3pt);
\draw[fill=black] (1,1) circle (3pt);
\draw[fill=black] (-1,2) circle (3pt);
\draw[fill=black] (1,2) circle (3pt);
\draw[fill=black] (0,3) circle (3pt);
%% vertex labels
\node at (.3,3.2) {$v$};
%\node at (0,-0.5) {$u$};
\node at (-2.6,1) {$C_1$};
\node at (2.6,1) {$C_2$};
\node at (0,2.35) {$C$};
%%% edges
\draw[thick] (0,0) -- (-2,0) -- (-1,1) -- (0,0);
\draw[thick] (0,0) -- (2,0) -- (1,1) -- (0,0);
%\draw[thick,bend left=120] (0,0) -- (-1,2) ;
\draw[thick] (0,0) to[out=-150,in=-150, distance=3cm ] (-1,2);
\draw[thick] (-1,2) -- (-2,0) -- (-1,1) -- (1,1) -- (2,0) -- (1,2) -- (1,1) -- (-1,2) -- (1,2) -- (-1,1) -- (0,3) -- (1,1) ;
\draw[thick] (0,3) -- (-1,2) -- (-1,1);
\draw[thick] (0,3) -- (1,2);
%\draw[thick, bend right=120] (0,0) -- (1,2);
\draw[thick] (0,0) to[out=-30,in=-30, distance=3cm ] (1,2);
\end{tikzpicture}
\caption{$\TC(H_\Gamma)=7$ and $\TC(H_{D_v(\Gamma)})=8.$}\label{fig:graph1}
\end{figure}

In light of the above, a $\TC$-generating-function anomalous situation would be given by a covering $E\to B$ with $\deg(P_E(x))>\deg(P_B(x))$.

\section{$P_X$ of Degree Greater than \red{$3$}}\label{sec:highdegree}
The examples of $\TC$-generating functions above all have $\deg(P_X(x))\leq3$. We now construct examples with arbitrary $\deg(P_X(x))$.

\begin{proposition}\label{prop:formula}
If $z_r(\Gamma)$ can be realized by a sequence of cliques $C_1,\ldots,C_r$ \red{of $\Gamma$,} where $\cap_{s=1}^k C_{i_s} = \varnothing$ for some subsequence of length $k$, then
$$z_r(\Gamma) = z_k(\Gamma) + (r-k)c(\Gamma).$$
\end{proposition}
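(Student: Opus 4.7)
\emph{Proof plan.} My plan is to establish the equality via two inequalities; both directions follow essentially from careful bookkeeping with Definition~\ref{def:zr}. The hypothesis provides a realizing sequence $C_1,\ldots,C_r$ summing to $z_r(\Gamma)$, together with a $k$-subsequence $C_{i_1},\ldots,C_{i_k}$ whose total intersection is empty. I will leverage this structure for the upper bound and construct an explicit sequence for the lower bound.

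For the upper bound, I would split the realizing sum into the $k$-subsequence contribution and the remaining $r-k$ terms. The subsequence has empty intersection by hypothesis, so by Definition~\ref{def:zr} its cardinality sum is at most $z_k(\Gamma)$. Each of the remaining cliques contributes at most $c(\Gamma)$. Thus
\[
z_r(\Gamma)\;=\;\sum_{s=1}^{k}|C_{i_s}|+\sum_{i\notin\{i_1,\ldots,i_k\}}|C_i|\;\leq\; z_k(\Gamma)+(r-k)\,c(\Gamma).
\]

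For the lower bound, I would pick $k$ cliques $D_1,\ldots,D_k$ that realize $z_k(\Gamma)$ (so that $\bigcap_{s=1}^{k}D_s=\varnothing$ and $\sum_{s=1}^{k}|D_s|=z_k(\Gamma)$) and append $r-k$ maximum cliques $D_{k+1},\ldots,D_r$, each of cardinality $c(\Gamma)$. Because the total intersection $\bigcap_{i=1}^{r}D_i$ is contained in the already empty intersection $\bigcap_{s=1}^{k}D_s$, the extended list $D_1,\ldots,D_r$ is admissible for Definition~\ref{def:zr} and witnesses the inequality $z_r(\Gamma)\geq z_k(\Gamma)+(r-k)\,c(\Gamma)$.

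Combining the two inequalities yields the claimed equality. The argument presents no real obstacle: it is a direct manipulation of the definition of $z_r(\Gamma)$. The only mildly delicate point to keep in mind is that Definition~\ref{def:zr} counts cliques in a sequence rather than a set, so appending several copies of the same maximum clique in the lower-bound construction is permitted, and the sequence realizing $z_k(\Gamma)$ need not consist of maximum cliques.
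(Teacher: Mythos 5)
Your proof is correct, and it is cleaner than the paper's. Both arguments use the same two ingredients—the empty-intersection subsequence is bounded by $z_k(\Gamma)$, and appending $(r-k)$ copies of a maximum clique to a $z_k$-realizing sequence preserves empty total intersection—but you organize them as a direct two-sided inequality, while the paper takes a more roundabout route: it first proves that any $k$-subsequence with empty intersection inside a $z_r$-realizing sequence must itself sum to exactly $z_k(\Gamma)$ (arguing by contradiction with the maximality of $z_r$), and then swaps the remaining $r-k$ cliques for copies of a maximum clique and argues the new sequence still realizes $z_r$. Your version bypasses the intermediate exact equality entirely, and the lower bound $z_r(\Gamma)\geq z_k(\Gamma)+(r-k)c(\Gamma)$ holds for any graph with no hypothesis needed (it is in fact the same observation underlying Proposition~\ref{prop:inequal}); the hypothesis is only used for the upper bound, and your write-up makes that transparent. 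Your final remark about the sequence-versus-set distinction is exactly the right caution.
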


\begin{proof}
Without loss of generality suppose that $C_1\cap\cdots\cap C_k=\varnothing$. By the definition of $z_k:=z_k(\Gamma)$, there are cliques
$K_1,\ldots,K_k$ with empty total intersection and $z_k=\sum_{i=1}^k |K_i|$. Moreover, since $z_k$ maximizes such sums, we must have
$$\sum_{i=1}^k |C_i| \leq \sum_{i=1}^k |K_i|.$$
However, if the inequality were strict, then we would have
$$z_r = \sum_{i=1}^r |C_i| < \sum_{i=1}^k |K_i| + \sum_{i=k+1}^r |C_i|,$$
which would contradict the maximality of $z_r$ since $K_1,\ldots,K_k,C_{k+1},\ldots,C_r$ has empty total intersection. Thus, for any $C_1,\ldots,C_k$ with empty total intersection in a sequence realizing $z_r$, we must have $\sum_{i=1}^k |C_i| = z_k$. But now we see that the sequence $C_1, \ldots,C_k,C_\Gamma,\ldots,C_\Gamma$ (where $C_\Gamma$, a clique of maximal size, is taken $(r-k)$-times) also forms a clique sequence with empty total intersection. Since $|C_j| \leq |C_\Gamma|$ for all $j=k+1,\ldots,r$, this sequence must be one that
realizes the maximum $z_r$. But then we have
$$z_r = \sum_{i=1}^k |C_i| + (r-k) |C_\Gamma| = z_k + (r-k) c(\Gamma),$$
as asserted.
\end{proof}

The following example paves the way for the main result in this section.
\vspace{.1in}

\begin{figure}[h]
\centering
\begin{tikzpicture}
%% vertices
\draw[fill=black] (0,0) circle (3pt);
\draw[fill=black] (2,2) circle (3pt);
\draw[fill=black] (-2,2) circle (3pt);
\draw[fill=black] (-1,1) circle (3pt);
\draw[fill=black] (1,1) circle (3pt);
\draw[fill=black] (0,2) circle (3pt);
%% vertex labels
%\node at (0,3.5) {$v$};
%\node at (0,-0.5) {$u$};
\node at (-1,1.6) {$C_1$};
\node at (1,1.6) {$C_2$};
\node at (0,.6) {$C_3$};
%%% edges
\draw[thick] (0,0) -- (-1,1) -- (-2,2) -- (0,2) -- (-1,1) -- (1,1) -- (0,2) -- (2,2) -- (1,1) -- (0,0);
\end{tikzpicture}
\caption{$\mathcal{O}_3$}\label{fig:o3}
\end{figure}

\begin{example}{\rm
The graph $\mathcal{O}_3$ in \figref{fig:o3}, with $6$ vertices and $9$ edges, has a central triangle with each side being the base of another triangle. Then, because we require cliques that are disjoint, we have $z_2(\mathcal{O}_3)=5$. However, for $z_3(\mathcal{O}_3)$, we only require empty total intersection, so we can take $C_1,C_2,C_3$ with each $C_j$ being one of the side triangles. This yields the maximum possible $z_3(\mathcal{O}_3)=9$. Note that we do not have $z_3(\mathcal{O}_3)=z_2(\mathcal{O}_3)+c(\Gamma)$ in this case. This is because the cliques $C_j$ have $C_i \cap C_j \not = \varnothing$ for all choices of $i,j=1,2,3$, so the hypothesis of Proposition~\ref{prop:formula} never holds. However, because the triangles $C_j$ use up all the vertices of the graph, any longer sequence $C_1,\ldots,C_r$ can do no better than starting with the triangles. Therefore, we must have, for any $r\geq 3$,
$$z_r(\mathcal{O}_3) = z_3(\mathcal{O}_3) + (r-3) c(\Gamma) = 9 + (r-3)(3) = 3r$$
which is the largest possible. \red{As a consequence $P_{H_3}(x)=-x^3-x^2+5x$, a polynomial of} degree 3.
}\end{example}

More generally, let $K_n$ be the complete graph on $n$ vertices. This forms the central clique akin to the central triangle of $\mathcal{O}_3$. Now, there are $n$ subsets of $(n-1)$ vertices in $K_n$ and for each of these take the join with a single new vertex. The graph $\mathcal{O}_n$ consists of $K_n$ and the $n$ cliques $C_1,\ldots,C_n$ (also of size $n$) corresponding to each of these joins.

\begin{proof}[Proof of Theorem~\ref{thm:bigdeg}]
We compute the numbers $z_r:=z_r(\mathcal{O}_n)$. For $z_2$, we see that any two cliques $C_i$ and $C_j$ intersect in $n-2$ vertices, so we are forced to take any $C_j$ and then use the central vertex not in $C_j$ with one of the new vertices attached to it to get $z_2=n+2$. For $z_3$, as noted above, $C_i \cap C_j$ overlap in $n-2$ central vertices, so the two vertices not in the intersection can be used with a new vertex to form a $K_3$ clique. Hence, $z_3=2n+3$. For $z_4$, we see that any
$C_i \cap C_j \cap C_k$ consists of $n-3$ central vertices, so the three central vertices not in the intersection can be used with a new vertex (not in $C_i$, $C_j$ or $C_k$ of course) to obtain a $K_4$ clique. Hence, $z_4=3n+4$. This same argument works up to $z_{n-1}=(n-2)n+n-1$ since any $n-2$ of the $C_i$ cliques intersect in $n-(n-2)=2$ central vertices, thus allowing a $K_{n-1}$ clique to be formed. However, for $z_n$, we may use all the $C_i$ since $\cap_{i=1}^n C_i = \varnothing$, so $z_n=n^2$. Then, by Proposition \ref{prop:formula}, we have $z_r = z_n + (r-n)n=rn$ for $r>n$. The result now follows from part (c) in Theorem~\ref{thm:genequiv}.
\end{proof}

\section{Consequences of the LS-Logarithmic Condition}\label{sec:prodsum}
We have seen in Section~\ref{sec:sequence} that the LS-logarithmicity hypothesis entails interesting consequences. Here we give further instances of the influence of the LS-logarithmicity hypothesis. \red{In what follows all spaces are assumed to be normal.}

\subsection{Open Covers}\label{subsec:opencovs}
We recall certain techniques involving open covers of spaces. These techniques go back at least to \cite{Os,Sv} and more recently have been used in
\cite{Dra,FGLO2,OpSt}.

\begin{lemma}\label{lem:opencov}
Suppose $U_0, \ldots,U_k$ is an open covering of the space $X$. Then, for any integer $m \geq 0$, the open cover
may be extended (recursively) to an open cover $U_0, \ldots, U_{k+m}$ with the property that any $(k+1)$ subsets still cover $X$.
Moreover, the added subsets $U_{k+1},\ldots,U_{k+m}$ are formed by taking disjoint unions of subsets of the original
$U_0, \ldots,U_k$.
\end{lemma}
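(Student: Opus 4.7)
The plan is to proceed by induction on $m$, with the base case $m=0$ trivial and the heart of the argument lying in the case $m=1$, which can be done using only normality.

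For $m=1$, for each $i\in\{0,\ldots,k\}$ let $F_i := X\setminus\bigcup_{j\neq i,\,j\leq k}U_j$. Each $F_i$ is closed and contained in $U_i$ (since $\{U_0,\ldots,U_k\}$ covers $X$); moreover, the $F_i$ are pairwise disjoint, since a point in $F_i\cap F_{i'}$ with $i\neq i'$ would have to lie only in $U_i$ and also only in $U_{i'}$ among the given sets. By normality of $X$, I would choose pairwise disjoint open sets $W_i$ with $F_i\subset W_i\subset U_i$ (the standard consequence of normality for finitely many disjoint closed sets). Setting $U_{k+1}:=\bigsqcup_{i=0}^k W_i$ yields a disjoint union of open subsets of the originals, and any $(k+1)$-subset of $\{U_0,\ldots,U_{k+1}\}$ covers $X$: removing $U_{k+1}$ leaves the original cover, while removing any $U_i$ leaves the remaining originals together with $U_{k+1}\supset F_i$.

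For the inductive step I would first reformulate the covering property via pigeonhole: \emph{any $(k+1)$-subset of $\{U_0,\ldots,U_{k+m}\}$ covers $X$ iff every $x\in X$ lies in at least $m+1$ of those sets} (otherwise the indices not containing $x$ form a subset of size $\geq k+1$ missing $x$). Given $\{U_0,\ldots,U_{k+m-1}\}$ satisfying the inductive hypothesis (every $x$ in at least $m$ of them), I need to construct $U_{k+m}$—a disjoint union of open subsets of the originals—containing the closed set $E:=\{x\in X : x\text{ lies in exactly }m\text{ of }U_0,\ldots,U_{k+m-1}\}$. Note that $E$ is indeed closed (its complement is the union of the intersections $\bigcap_{j\in J}U_j$ over $(m+1)$-subsets $J$), and $E\subset\bigcup_{i\leq k}U_i$, so each $x\in E$ lies in some original.

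The hard part is constructing $U_{k+m}=\bigsqcup_{i=0}^k W_i$ with the $W_i\subset U_i$ open, pairwise disjoint, and $E\subset\bigsqcup_i W_i$. The plan is to use a partition of unity $\phi_0,\ldots,\phi_k$ subordinate to the original cover to assign each $x\in E$ to an index $i\in S(x):=\{i : x\in U_i\}$ at which $\phi_i(x)$ is maximal, using an auxiliary shrinking of $\{U_0,\ldots,U_k\}$ to resolve ties. Applying normality to the resulting (relatively) closed preimages inside $E$ then yields pairwise disjoint open thickenings $W_i\subset U_i$. The main technical difficulty is ensuring that this thickening produces genuinely open, pairwise disjoint sets while still maintaining $E\subset\bigsqcup W_i$; here iterated shrinkings applied along the induction play the key role, and the disjoint-union structure of each previously added $U_{k+j}$ guarantees that the construction stays within the original cover.
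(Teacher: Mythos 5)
Your base case, the $m=1$ step, and the pigeonhole reformulation are all correct; in particular the $m=1$ argument --- shrinking the pairwise disjoint closed sets $F_i = X\setminus\bigcup_{j\neq i}U_j$ to disjoint open $W_i\subset U_i$ by normality --- is exactly right. However, the inductive step as written has a genuine gap. You correctly identify the closed set $E$ of points lying in exactly $m$ of $U_0,\ldots,U_{k+m-1}$, and you correctly reduce the problem to producing a disjoint union $\bigsqcup_{i\leq k} W_i$ of open subsets $W_i\subset U_i$ covering $E$. But the proposed construction via a partition of unity $\phi_0,\ldots,\phi_k$ and an ``argmax'' assignment does not yield the required closed, pairwise disjoint pieces of $E$: the sets $\{x\in E:\phi_i(x)\geq\phi_\ell(x)\text{ for all }\ell\}$ are closed but overlap wherever ties occur, while any lexicographic tie-break (e.g.\ requiring $\phi_1>\phi_0$) destroys closedness. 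The appeal to an ``auxiliary shrinking'' and to ``iterated shrinkings'' is a placeholder, not an argument, and it is not clear how to make it precise.

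The repair is that your own $m=1$ idea generalizes verbatim once one records membership over the \emph{whole} current collection rather than only the originals. For each $T\subseteq\{0,\ldots,k+m-1\}$ with $|T|=m$, put $E_T := E\cap\bigcap_{i\notin T}(X\setminus U_i)$. This is closed (an intersection of closed sets), and since a point of $E$ avoiding every $U_i$ with $i\notin T$ lies in exactly $m$ sets, it must lie in every $U_i$ with $i\in T$; hence $E_T=\{x\in E: S(x)=T\}$, the $E_T$ are pairwise disjoint, and $E=\bigsqcup_T E_T$. Because $U_0,\ldots,U_k$ already cover $X$, every nonempty $E_T$ has $T\cap\{0,\ldots,k\}\neq\varnothing$; choosing $i(T)\leq k$ in $T$ and setting $G_i:=\bigcup_{T:\,i(T)=i}E_T$ gives closed, pairwise disjoint sets with $G_i\subset U_i$, to which normality applies exactly as in your $m=1$ case, giving $U_{k+m}=\bigsqcup_{i=0}^{k}W_i$. (For context: the paper states this lemma without proof, citing Ostrand, Schwarz, Dranishnikov, et al., so the comparison here is against the mathematics rather than against a proof in the paper.)
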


\begin{remark}\label{rem:catcov}
If the cover $U_0, \ldots,U_k$ is \emph{categorical} (i.e., each $U_j$ is contractible to a point in $X$), then the extended
cover $U_0, \ldots, U_{k+m}$ is also categorical by simply \emph{restricting the original contracting homotopies to open subsets}. Notice
that no problem is incurred with intersections since the extended cover consists of disjoint unions of subsets of the cover
$U_0, \ldots, U_{k}$.
\end{remark}

\red{An open} covering $U_0,\ldots,U_{k+m}$ having the property of Lemma~\ref{lem:opencov}, i.e., that any $(k+1)$ subsets cover $X$, is called a
$(k+1)$-cover. We then have:

\begin{lemma}\label{lem:plusone}
If $U_0,\ldots,U_{k+m}$ is a $(k+1)$-cover of $X$, then each point $x \in X$ is in at least $m+1$ of the subsets.
\end{lemma}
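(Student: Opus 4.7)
The plan is to argue by contrapositive (equivalently, by contradiction). Assume for contradiction that some point $x\in X$ lies in at most $m$ of the sets $U_0,\ldots,U_{k+m}$. Since the total number of sets in the cover is $(k+m)+1=k+m+1$, the number of sets that do \emph{not} contain $x$ is at least
$$(k+m+1)-m=k+1.$$

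Select any $k+1$ of these sets that miss $x$. By the definition of a $(k+1)$-cover, this subcollection of size $k+1$ must cover $X$. In particular it must contain $x$, contradicting the fact that each of these selected sets was chosen to avoid $x$. Hence every $x\in X$ must lie in at least $m+1$ of the $U_i$, as claimed.

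The main (and only) subtlety is a careful bookkeeping of indices: the collection has $k+m+1$ sets (indices $0$ through $k+m$), the covering property refers to subfamilies of size $k+1$, and the conclusion gives a lower bound of $m+1$ memberships. I don't foresee a real obstacle; the argument is a one-line pigeonhole/complement count, which is presumably why the paper is content to state it as a lemma without further machinery.
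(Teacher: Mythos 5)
Your proof is correct and is essentially the same as the paper's: both argue by contradiction, noting that a point $x$ lying in at most $m$ of the $k+m+1$ sets would be missed by at least $k+1$ of them, contradicting the $(k+1)$-cover property. The index bookkeeping in your write-up is accurate.
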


\begin{proof}
Suppose not. Then some $x \in X$ lies in only (at most) $m$ of the subsets, say $U_0,\ldots,U_{m-1}$. But then $x$ does not lie
in any of the $k+1$ subsets $U_m,\ldots,U_{k+m}$, and this contradicts the cover being a $(k+1)$-cover.
\end{proof}

\begin{proposition}\label{prop:coverings}
Let $X=\prod_{i=1}^n X_i$ be a product where each $X_i$ has an open covering $\red{\mathcal{U}_i}=\{{}_iU_j\}_{j=0}^{k_i}$. \red{For each $i$, extend $\mathcal{U}_i$ to a $(k_i+1)$-cover $\{{}_iU_j\}_{j=0}^{S}$ where $S=\sum_{i=1}^n k_i$ (this can be done in view of Lemma~\ref{lem:opencov}). Then the sets $W_j=\prod_{i=1}^n {}_iU_j$, with $j=0,1,\ldots, S$, form an open covering of $X$.}
\end{proposition}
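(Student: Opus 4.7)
The plan is to show that every $x=(x_1,\ldots,x_n)\in X$ lies in some $W_j$, by a counting argument on indices. First I would set $J_i:=\{j\in\{0,1,\ldots,S\}\colon x_i\in {}_iU_j\}$, so that finding a $j$ with $x\in W_j$ amounts to proving $\bigcap_{i=1}^n J_i\neq\varnothing$.

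The key input is Lemma~\ref{lem:plusone} applied to each extended $(k_i+1)$-cover $\{{}_iU_j\}_{j=0}^S$: since this cover has $S+1$ members, writing $S+1=(k_i+1)+(S-k_i)$, the lemma guarantees that $x_i$ belongs to at least $S-k_i+1$ of the sets ${}_iU_0,\ldots,{}_iU_S$. In other words, $|J_i|\geq S-k_i+1$, and therefore the complement in $\{0,1,\ldots,S\}$ satisfies $|J_i^c|\leq k_i$.

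A straightforward union bound then gives
$$\left|\bigcup_{i=1}^n J_i^c\right|\leq \sum_{i=1}^n |J_i^c|\leq \sum_{i=1}^n k_i = S,$$
while the ambient index set $\{0,1,\ldots,S\}$ has cardinality $S+1$. Hence there exists an index $j\in\{0,1,\ldots,S\}$ not in any $J_i^c$, i.e., $j\in\bigcap_{i=1}^n J_i$. For this $j$ we have $x_i\in {}_iU_j$ for every $i$, so $x\in W_j$.

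The only potential obstacle is conceptual rather than technical: one must notice that the exponents in Lemma~\ref{lem:plusone} combine exactly so that the total deficit $\sum k_i$ equals $S$, leaving precisely one index to spare. The openness of $W_j$ is automatic from the product topology, so no further work is required.
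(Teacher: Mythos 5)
Your proof is correct and rests on the same key input as the paper's, namely Lemma~\ref{lem:plusone} applied to each extended $(k_i+1)$-cover, followed by a counting argument to locate a common index. The paper reaches the same conclusion by an iterative ``without loss of generality'' reindexing (showing that one may arrange for $x_p$ to lie in ${}_pU_0,\ldots,{}_pU_{S-\sum_{j\le p}k_j}$ at each stage, ultimately concluding $x\in W_0$), whereas your union bound $|\bigcup_i J_i^c|\le\sum_i k_i=S<S+1$ packages that same pigeonhole count more cleanly and avoids the cascading relabeling; this is a genuine presentational improvement, but not a different proof.
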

\begin{proof}
Let $(x_1,x_2,\ldots,x_n)\in X$. The first
coordinate $x_1$ lies in at least $S-k_1+1$ subsets by \lemref{lem:plusone}, so without loss of generality take these to be
${}_1U_0,\ldots,{}_1U_{S-k_1}$. The point $x_2$ lies in at least $S-k_2+1$ of the subsets, so it must be in at least
$S-k_1 - k_2 +1$ of the sets ${}_2U_0,\ldots,{}_2U_{S-k_1}$. Again, we may take these to be ${}_2U_0,\ldots,{}_2U_{S-k_1-k_2}$.
Similar arguments say that $x_p$ lies in $S-\sum_{j=1}^p k_j +1$ of the sets ${}_pU_0,\ldots,{}_pU_{S-T}$, where $T=\sum_{j=1}^{p-1} k_j$, and we may take these to be ${}_pU_0,\ldots,{}_2U_{S-T-k_p}$. Finally, $x_n$ lies in at least
$S-S+1=1$ set which may be taken to be $_nU_0$. But then $(x_1,\ldots,x_n)\in W_0$. Hence $\{W_j\}$ covers.
\end{proof}

\subsection{LS Category of Products}\label{subsec:cat}
As a reference for our later use of open cover properties, we give an easy proof of the \red{standard} product inequality for LS category\red{:}

\begin{proposition}\label{prop:prodinequal}
For $X$ and $Y$ normal, $\cat(X \times Y) \leq \cat(X)+ \cat(Y)$.
\end{proposition}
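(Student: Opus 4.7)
The plan is to invoke the machinery built up in Subsection~\ref{subsec:opencovs} in the simplest possible case, namely two factors. Let $k=\cat(X)$ and $\ell=\cat(Y)$ and pick categorical open covers $\mathcal U_1=\{U_0,\ldots,U_k\}$ of $X$ and $\mathcal U_2=\{V_0,\ldots,V_\ell\}$ of $Y$. Setting $S=k+\ell$, I would apply Lemma~\ref{lem:opencov} to extend $\mathcal U_1$ to a $(k{+}1)$-cover $\{U_0,\ldots,U_S\}$ of $X$ with $S+1$ members, and likewise extend $\mathcal U_2$ to an $(\ell{+}1)$-cover $\{V_0,\ldots,V_S\}$ of $Y$. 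By Remark~\ref{rem:catcov}, both extended covers remain categorical, since the new sets are merely disjoint unions of subsets of the original categorical sets, and the original null-homotopies restrict without issue.

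Next, applying Proposition~\ref{prop:coverings} with $n=2$ (where the role of $X$ there is now played by $X\times Y$), the sets
$$W_j \;=\; U_j\times V_j,\qquad j=0,1,\ldots,S=k+\ell,$$
form an open cover of $X\times Y$ consisting of $k+\ell+1$ sets. Since $U_j$ is null-homotopic in $X$ and $V_j$ is null-homotopic in $Y$, the product $W_j$ is null-homotopic in $X\times Y$ (take the product of the two contracting homotopies). Therefore $\{W_0,\ldots,W_{k+\ell}\}$ is a categorical cover of $X\times Y$, yielding $\cat(X\times Y)\leq k+\ell=\cat(X)+\cat(Y)$.

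There is essentially no obstacle beyond bookkeeping: all the substantive content has been packaged into Lemma~\ref{lem:opencov}, Remark~\ref{rem:catcov} and Proposition~\ref{prop:coverings}. The only minor point worth verifying explicitly is that normality of $X$ and $Y$ (hence of $X\times Y$) is what allows the open covers to be shrunk or enlarged as needed in the extension process of Lemma~\ref{lem:opencov}; this is why the standing normality hypothesis stated at the opening of the section is invoked. The real point of giving this easy proof here is to showcase the open cover technique in preparation for the harder product and polyhedral product estimates for $\cat$ and $\TC_r$ that will follow.
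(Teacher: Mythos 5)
Your proof is correct and follows the paper's argument essentially verbatim: extend both categorical covers to $(k{+}1)$- and $(\ell{+}1)$-covers of common length $k+\ell+1$ via Lemma~\ref{lem:opencov}, form the diagonal products $W_j=U_j\times V_j$, invoke Proposition~\ref{prop:coverings} for the covering property and Remark~\ref{rem:catcov} for categoricity. The only addition is your remark on where normality enters, which is a harmless elaboration of the section's standing hypothesis.
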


\begin{proof}
Let $\cat(X)=k$ and $\cat(Y)=m$ be represented by respective open covers $U_0,\ldots,U_k$  and $V_0,\ldots,V_m$. Then
by \lemref{lem:opencov}, we may extend these covers to \red{categorical covers} $U_0,\ldots,U_{k+m}$ and $V_0,\ldots,V_{k+m}$ where the former
is a $(k+1)$-cover and the latter is an $(m+1)$-cover. Let
$$W_j=U_j \times V_j, \quad {\rm for}\ j=0,\ldots,k+m.$$
Clearly each $W_j$ is contractible in $X \times Y$ since we may use the product of the individual contracting homotopies in
$X$ and $Y$. By \propref{prop:coverings}, we know that $\{W_j\}$ covers $X \times Y$ and therefore, $\cat(X \times Y) \leq k+m$.
\end{proof}

\begin{remark}\label{rem:morespaces}
The exact same proof as above applies to show
$$\cat(\prod_{i=1}^m X_i) \leq \sum_{i=1}^m \cat(X_i).$$
Here, each open categorical cover is extended to one of the form
$U_0, \ldots, U_s$, where $s=\sum_{i=1}^m \cat(X_i)$.
\end{remark}

\subsection{LS Category of Polyhedral Products}\label{subsec:polycat}
Let $\{(X_i,A_i)\}_{i=1}^m$ be a family of pairs of spaces, and $K$ be a simplicial complex on vertices $[m]:=\{1,2,\ldots,m\}$. For a subset $\sigma \subseteq [m]$, let
$$X^\sigma = \prod_{i=1}^m Y_i, \quad {\rm where} \quad
Y_i =\begin{cases}
X_i & {\rm if}\ i \in \sigma; \\
A_i & {\rm if}\ i \not\in\sigma.
\end{cases}
$$
The polyhedral product $\mathcal{Z}(\{X_i,A_i\},K)$ is the union of all $X^\sigma$ with $\sigma$ running over the simplices\footnote{Note that $X^\tau\subseteq X^\sigma$ for $\tau \subseteq \sigma$. So, in forming the union~(\ref{launion}), it suffices to consider simplices $\sigma$ of $K$ not contained in any other simplex of $K$.} of $K$,
\begin{equation}\label{launion}
\mathcal{Z}(\{X_i,A_i\},K) = \cup_{\red{\sigma \in K}} X^\sigma.
\end{equation}
We are only interested in the case where all $A_i = *$, and use the notation $\und{X}^K=\mathcal{Z}(\{X_i,A_i\},K)$. If in addition all $X_i = X$, we simply write $X^K$. The connection to our earlier work, and motivation for the results given below is apparent once it is realized that any $K(H_\Gamma,1)$ is the polyhedral product $(S^1)^K$, where $K$ is the flag complex with $1$-skeleton given by $\Gamma$.

In \cite{FT}, the category of $X^K$ was computed to be $\cat(X)\cdot (1+ \dim(K))$ provided that $X$ is LS-logarithmic. Here we shall generalize this result to polyhedral products $\und{X}^K$. %Note that, in fact, we have, as a space, $$\und{X}^K = \cup_{\red{\sigma}} X^\sigma$$ where \red{the union runs over all} maximal simplices $\sigma$ of $K$.
%Recall that the \emph{join} of simplicial complexes $K$ and $K'$ is given by
%$$K*K' = \{\sigma\,|\, \sigma=\sigma_1 \cup \sigma_2, \ \sigma_1 \in K, \ \sigma_2 \in K'\}.$$
%From the definition of polyhedral product, we see that
%$$\und{X}^{K*K'} = \und{X}^K \times \und{X}^{K'}.$$
%We shall use this below.
We start with the following result that is reminiscent of \cite[Theorem 5.3]{GGY}.

\begin{lemma}\label{lem:upbound}
Let $\{(X_i,*)\}_{i=1}^m$ be a family of based spaces and $K$ be a simplicial complex on vertices $[m]$. Then
$$
\cat(\und{X}^K) \leq \max_{\sigma=[i_1,\ldots,i_n]}\left\{\cat(X_{i_1})+\cdots + \cat(X_{i_n})\rule{0mm}{4mm}\right\},
$$
where the maximum is taken over all\,\footnote{Of course, in this type of expressions the maximum can be taken over all maximal simplices of $K$, i.e., simplices of $K$ that are not contained in any other simplex of $K$.} simplices $\sigma$ of $K$.
\end{lemma}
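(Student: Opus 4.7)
The plan is to construct a categorical open cover of $\und{X}^K$ with $M+1$ elements, where
$$M:=\max_{\sigma\in K}\sum_{i\in\sigma}\cat(X_i),$$
by adapting the product-cover construction of Proposition~\ref{prop:coverings} to the polyhedral setting. This generalizes the proof treating the diagonal case $X_i=X$ in~\cite{FT} to an arbitrary family of factors.

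Writing $k_i:=\cat(X_i)$, for each $i$ I fix a categorical open cover $\{{}_iU_j\}_{j=0}^{k_i}$ of $X_i$, arranged (after standard adjustments) so that the basepoint $*_i$ lies in every ${}_iU_j$ and each contracting homotopy $H_{i,j}\colon{}_iU_j\times I\to X_i$ satisfies $H_{i,j}(\,\cdot\,,1)\equiv *_i$ and $H_{i,j}(*_i,t)=*_i$ for all $t$. Applying Lemma~\ref{lem:opencov} together with Remark~\ref{rem:catcov}, I then extend each family to a $(k_i+1)$-cover $\{{}_iU_j\}_{j=0}^M$ of $X_i$; the new members, being disjoint unions of original ones, inherit basepoint-fixing contractions to~$*_i$.

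Next I define
$$W_j:=\und{X}^K\,\cap\,\prod_{i=1}^m{}_iU_j\qquad(j=0,\ldots,M),$$
each of which is open in $\und{X}^K$. For the covering property, take $x=(x_1,\ldots,x_m)\in\und{X}^K$ with support $\sigma:=\{i:x_i\neq *_i\}\in K$. By Lemma~\ref{lem:plusone}, each $x_i$ lies outside at most $k_i$ of the sets ${}_iU_j$, so the number of indices $j$ with $x_i\notin{}_iU_j$ for some $i\in\sigma$ is at most $\sum_{i\in\sigma}k_i\leq M$. Hence some $j$ satisfies $x_i\in{}_iU_j$ for every $i\in\sigma$, and since $*_i\in{}_iU_j$ for the remaining coordinates, $x\in W_j$.

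Contractibility of $W_j$ in $\und{X}^K$ will be witnessed by the product homotopy $\tilde H_j(x,t):=(H_{1,j}(x_1,t),\ldots,H_{m,j}(x_m,t))$: at $t=1$ every coordinate reaches $*_i$, and at every intermediate time the support $\{i:H_{i,j}(x_i,t)\neq *_i\}$ is contained in the original support $\sigma\in K$ (precisely because each $H_{i,j}$ fixes $*_i$), so the trajectory never leaves $\und{X}^K$. This produces a categorical cover of $\und{X}^K$ with $M+1$ elements and hence $\cat(\und{X}^K)\leq M$. The principal technical difficulty is the basepoint-fixing arrangement of the contractions: without it, the product homotopy could move a coordinate originally equal to $*_i$ away from the basepoint and thereby escape the polyhedral product; with it, the homotopy respects the simplicial structure of~$K$ at all times.
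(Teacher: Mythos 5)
Your argument follows the same overall strategy as the paper's (extend each $\cat(X_i)$-realizing cover to a $(k_i+1)$-cover, build product open sets, contract by product homotopies, and use the pointedness of the contractions to stay inside $\und{X}^K$), but there is a genuine gap in the step you describe as ``standard adjustments.'' You assert that each categorical cover $\{{}_iU_j\}_{j=0}^{k_i}$ of $X_i$ can be arranged so that $*_i\in {}_iU_j$ for \emph{every} $j$, and, further, that this persists for every set in the extended family $\{{}_iU_j\}_{j=0}^{M}$. Neither point is routine. For the original cover, one cannot simply enlarge each ${}_iU_j$ by a neighborhood of $*_i$ (the union need not remain null-homotopic in $X_i$); one needs a genuinely different construction, e.g.\ pulling back a contractible neighborhood of $*_i$ through a pointed factorization of the diagonal through the fat wedge. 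For the extended sets, \lemref{lem:opencov} produces them as disjoint unions of \emph{subsets} of the original ${}_iU_j$, and nothing in that construction guarantees one of those subsets contains $*_i$; so your sentence ``the new members, being disjoint unions of original ones, inherit basepoint-fixing contractions'' silently conflates ``subsets'' with the original sets and does not address whether $*_i$ lies in the new set at all. Both your covering argument (``since $*_i\in{}_iU_j$ for the remaining coordinates'') and the well-definedness of your product homotopy $\tilde H_j$ on $W_j$ depend on this unproved membership.

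The claim \emph{can} be arranged (and the paper itself is implicitly in need of a version of it: without it, the union $W_j^K=\cup_\sigma W_j^\sigma$ in the paper's proof need not be open in $\und{X}^K$, and the compatibility of homotopies on overlaps uses pointedness at exactly the indices where $*_i\in{}_iU_j$). But as you have written it, this is the load-bearing step, and it needs a precise justification: produce a categorical cover of each $X_i$ whose members all contain $*_i$ and contract to $*_i$ rel $*_i$, and then re-examine the recursive extension of \lemref{lem:opencov}, arranging at each stage a shrinking in which $*_i$ lies in exactly one shrunken set so that the newly added member picks up a piece containing $*_i$. With that supplied, your streamlined definition $W_j=\und{X}^K\cap\prod_i{}_iU_j$ does give a clean variant of the paper's argument that avoids the explicit gluing over maximal simplices.
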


\begin{proof}
Let ${}_iU_0,\ldots,{}_iU_{k_i}$ denote an open cover of the space $X_i$ realizing $\cat(X_i)=k_i$. Fix homotopies
${}_iH_j$ contracting ${}_iU_j$ to a point in $X_i$. (These may be taken to be base pointed by \cite[Chapter 1]{CLOT}.)
Let $\sigma=[i_1,\ldots,i_n]$ be a maximal simplex giving a ``subproduct''
$$X^\sigma = \prod_{j=1}^n X_{i_j} \subseteq \und{X}^K.$$
Here an element of $X^\sigma$ is thought of as element of $\und{X}^K$ by completing with base points in the coordinates corresponding to vertices not in $\sigma$. By the proof of \propref{prop:prodinequal} and \remref{rem:morespaces}, we see that $X^\sigma$ is covered by $W_0^\sigma,\ldots,W_s^\sigma$, where $s=\sum_{j=1}^n k_{i_j}$ and $$W_j^\sigma = {}_{i_1}U_j \times \cdots \times {}_{i_n}U_j$$ for extended covers ${}_{i_j}U_0, \ldots, {}_{i_j}U_s$ for each space appearing in the product~$X^\sigma$. Note that, by \remref{rem:catcov}, the contracting homotopies for the extended covers are simply restrictions of the ${}_{i_j}H_r$. This is, in fact, the key point. If $\tau=[r_1,\ldots,r_p]$ is another maximal simplex of $K$, then we obtain
$$W_j^\tau = {}_{r_1}U_j \times \cdots \times {}_{r_p}U_j$$
for $j =0,\ldots, z=\sum_{i=1}^p k_{r_i}$. Here we use the fact that if $m >p$, then the extended open cover $U_0,\ldots,U_m$ of Lemma \ref{lem:opencov} is an extension of $U_0,\ldots,U_p$. Now, if $s > z$, then for $s \geq j > z$, let $W_j^\tau = \varnothing$. Then, because the open covers \emph{and} homotopies are fixed for each $X_i$, if $W_j^\sigma \cap W_j^\tau \not = \varnothing$, the homotopies and open subsets agree on the intersection. Hence, they can be pieced together to contract $W_j^\sigma \cup W_j^\tau$ to a point in $\und{X}^K$. This can then be done for all maximal simplices, so we define
$$W_j^K = \cup_\sigma W_j^\sigma,\ {\rm where}\ \sigma\ {\rm is\ maximal}.$$
Here, we have $j=0,\ldots, \max_{\sigma=[i_1,\ldots,i_n]}\{k_{i_1}+\cdots + k_{i_n}\}$, where the maximum is taken over maximal simplices. Notice that by taking unions, we only require the maximal simplex with the greatest number of open sets.
\end{proof}

\begin{proof}[Proof of Theorem~\ref{thm:polycat}]
By \lemref{lem:upbound}, we need only show that
\begin{equation}\label{noslb}
\max_{\sigma=[i_1,\ldots,i_n]}\{\cat(X_{i_1})+\cdots + \cat(X_{i_n})\}
\end{equation}
is a lower bound for $\cat(\und{X}^K)$. Let $\red{\sigma_0}$ be a simplex realizing~(\ref{noslb}). Since $X^{\red{\sigma_0}} \subseteq \und{X}^K$ is a subproduct, there is a retraction $\und{X}^K \to X^{\red{\sigma_0}}$ obtained by taking all
other coordinates to base points. But retracts have smaller category so we have $\cat(X^{\red{\sigma_0}}) \leq \cat(\und{X}^K)$. Now, in general,
we only have the inequality
$$\cat(X^{\red{\sigma_0}}) \leq \max_{\sigma=[i_1,\ldots,i_n]}\{\cat(X_{i_1})+\cdots + \cat(X_{i_n})\}$$
but because of the hypothesis, we obtain equality. Hence,
$$\red{\max_{\sigma=[i_1,\ldots,i_n]}\{\cat(X_{i_1})+\cdots + \cat(X_{i_n})\}=
\cat(X^{\red{\sigma_0}})} \leq \cat(\und{X}^K)$$
and we are done.
\end{proof}

\begin{example}\label{familyofgraphs}{\rm
\red{Let $\{\Gamma_i\}_{i=1}^m$ be a family of graphs whose sets of vertices are pairwise disjoint. The set of vertices of the graph join $\Gamma:=\Gamma_1\ast\cdots\ast \Gamma_m$ is the union of the vertices of all the graphs $\Gamma_i$ where,} besides the adjacencies in the individual graphs, each vertex of \red{$\Gamma_i$} is taken to be adjacent \red{in $\Gamma$} to each vertex of \red{$\Gamma_j$ whenever $i\neq j$.} Therefore, cliques \red{$C_i$ of $\Gamma_i$ ($1\leq i\leq m$),} form a new clique $\red{C_1\ast\cdots\ast C_m}$ (of size $\red{\sum_i |C_i|}$) of \red{$\Gamma$.} Hence, the size of the
largest clique in $\red{\Gamma}$ is the sum of the sizes of the largest cliques in the individual graphs,
\begin{equation}\label{cliquejoin}
\red{c(\Gamma)=\sum_{i=1}^mc(\Gamma_i).}
\end{equation}
In terms of the RAA groups, we have disjoint subgroups so that elements in different subgroups commute; hence, $H_\Gamma=H_{\Gamma_1} \times \cdots \times H_{\Gamma_m}$, \red{and (\ref{cliquejoin}) gives}
$$\cd(H_{\Gamma})=\cd(H_{\Gamma_1} \times \cdots \times H_{\Gamma_m}) = \sum_{i=1}^m \cd(H_{\Gamma_i}).$$ \red{In terms of classifying spaces, the above discussion shows that any collection $\{(K(H_{\Gamma_i},1),\ast)\}_{i=1}^m$ is LS-logarith\-mic. Although the associated polyhedral product $\und{\hspace{.3mm}\Gamma\hspace{.3mm}}^K:=\mathcal{Z}(\{K(H_{\Gamma_i},1),\ast\},K)$ classifies a RAA group, so its category is given by~(\ref{cdccatem}), Theorem \ref{thm:polycat} yields an explicit formula for $\cat(\und{\hspace{.3mm}\Gamma\hspace{.3mm}}^K)$ in terms of} the simplicial complex $K$ and the cliques of the graphs $\Gamma_i$.
}\end{example}

%\begin{proof}
%In \thmref{thm:polycat}, hypothesis $\cat(X_{i_1} \times \cdots \times X_{i_k})=\sum_{j=1}^k \cat(X_{i_j}\red{)}$ translates into $\cat(X^s)=s\cdot \cat(X)$, so we can apply the theorem's conclusion. But here all $\cat(X_i)=\cat(X)$ and a simplex $\sigma=[i_1,\ldots,i_n]$ realizing the maximum gives a value of $n\cdot \cat(X) = (1 + \dim(\sigma))\cat(X)$. Because all the categories are $\cat(X)$ here, we must also have $\dim(\sigma) = \dim(K)$ since any maximal simplex $\tau$ of higher dimension would produce a larger value of $(1 + \dim(\tau))\cat(X)$.
%\end{proof}

\subsection{TC of Polyhedral Products}\label{subsec:TCpoly}
For reference purposes, we \red{record the following easy consequence of Theorem~\ref{thm:bounds} and Lemma~\ref{lem:upbound}:}

\begin{corollary}\label{cor:TCupbound}
\red{For a family $\{(X_i,*)\}_{i=1}^m$ of based spaces and a simplicial complex $K$ on $[m]$ vertices, we have}
%Suppose that $\{(X_i,*)\}_{i=1}^m$ is an \red{LS-logarithmic} collection of \red{based} spaces. If $K$ is a simplicial complex on $[m]$ vertices, we have
$$\TC_{\red{r}}(\und{X}^K) \leq \red{r}\left(\max_{\sigma=[i_1,\ldots,i_n]}\,\left\{\cat(X_{i_1})+\cdots + \cat(X_{i_n})\rule{0mm}{4mm}\right\}\right),$$
where the maximum is taken over all simplices $\sigma$ of $K$.
\end{corollary}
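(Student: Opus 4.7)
The plan is straightforward, since the corollary is advertised as an easy consequence of Theorem~\ref{thm:bounds} and Lemma~\ref{lem:upbound}. The strategy is to chain together three inequalities. First, I would apply the upper half of Theorem~\ref{thm:bounds} to obtain $\TC_r(\und{X}^K)\leq\cat((\und{X}^K)^r)$. Second, I would invoke subadditivity of LS-category under Cartesian products (Proposition~\ref{prop:prodinequal}, iterated as in Remark~\ref{rem:morespaces}) to get $\cat((\und{X}^K)^r)\leq r\cdot\cat(\und{X}^K)$. Third, I would apply Lemma~\ref{lem:upbound} to bound $\cat(\und{X}^K)$ by the maximum, over simplices $\sigma=[i_1,\ldots,i_n]$ of $K$, of the sum $\cat(X_{i_1})+\cdots+\cat(X_{i_n})$.

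Concatenating these three steps yields
$$
\TC_r(\und{X}^K)\;\leq\; r\cdot\cat(\und{X}^K)\;\leq\; r\left(\max_{\sigma=[i_1,\ldots,i_n]}\{\cat(X_{i_1})+\cdots+\cat(X_{i_n})\}\right),
$$
which is exactly the asserted estimate. No genuine obstacle arises in the argument; the only subtlety worth recording is that Proposition~\ref{prop:prodinequal} requires normality of the factor spaces, but this is the blanket hypothesis already fixed at the beginning of Section~\ref{sec:prodsum}, so it applies here with no extra assumptions.

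It is worth emphasizing why this companion upper bound deserves an explicit statement: it parallels Theorem~\ref{thm:polyTC} and makes transparent where the sharpness of that theorem originates. Under hypotheses $(1)$--$(3)$ of Theorem~\ref{thm:polyTC}, each of the three inequalities above becomes an equality (the first via hypothesis $(1)$ together with the logarithmicity in $(2)$ and $(3)$; the second by LS-logarithmicity applied to $\und{X}^K$; the third by the retraction argument used in the proof of Theorem~\ref{thm:polycat}). Thus Corollary~\ref{cor:TCupbound} simultaneously records a universally valid upper bound and sets the stage for the sharp computation carried out in Theorem~\ref{thm:polyTC}.
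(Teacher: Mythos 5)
Your proof is correct and is exactly the argument the paper intends: chain the upper bound $\TC_r\leq\cat(\cdot^r)$ from Theorem~\ref{thm:bounds} with the product subadditivity of $\cat$ and then with Lemma~\ref{lem:upbound}. Nothing more is needed, and your remark about normality being the standing assumption of Section~\ref{sec:prodsum} correctly addresses the only hypothesis that could be overlooked.
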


%\begin{proof}
%We use the fact that $\und{X}^K \times \und{X}^K = \und{X}^{K*K}$ and that a maximal simplex $\sigma$ of $K*K$ is given by $\sigma=\sigma_1 \cup \sigma_2$ where $\sigma_1$ and $\sigma_2$ are maximal simplices of $K$. (Note that the realization of $\sigma_1 \cup \sigma_2$ is the simplicial join of the simplices $\sigma_1$ and $\sigma_2$.) By the usual sectional category bound, we have $$\TC(\und{X}^K) \leq \cat(\und{X}^K \times \und{X}^K)=\cat(\und{X}^{K*K}).$$ By Theorem \ref{thm:polycat}, we know that, for any simplicial complex $L$, we have $$\cat(\und{X}^L) = \max_{\sigma=[i_1,\ldots,i_n]}\{\cat(X_{i_1})+\cdots + \cat(X_{i_n})\}$$ where the maximum is taken over all maximal simplices $\sigma$ of $L$. In particular, we may take $L=K*K$ and the maximal simplex $\sigma_1=[i_1,\ldots,i_n]$ of $K$ giving the maximum sum $\TC(X_{i_1})+\cdots + \TC(X_{i_n})$. Then $\sigma = \sigma_1 \cup \sigma_1$ gives a maximum for $\und{X}^{K*K}$ and $$\TC(\und{X}^K) \leq \cat(\und{X}^{K*K}) = 2\sum_{j=1}^n \cat(X_{i_j}).$$
%\end{proof}

\begin{proof}[Proof of Theorem~\ref{thm:polyTC}]
We follow the argument in the proof of Theorem~\ref{thm:polycat}. By \corref{cor:TCupbound}, it suffices to show that
\begin{equation}\label{mimica}
\max_{\sigma=[i_1,\ldots,i_n]}\left\{\rule{0mm}{4mm}\TC_{\red{r}}(X_{i_1})+\cdots + \TC_{\red{r}}(X_{i_n})\right\}
\end{equation}
is a lower bound for $\TC_{\red{r}}(\und{X}^K)$. Let $\sigma_{\red{0}}$ be \red{a} simplex realizing~(\ref{mimica}). As in the previous proof, $X^{\sigma_{\red{0}}}$ is a retract of $\und{X}^K$, so we have $\TC_{\red{r}}(X^{\sigma_{\red{0}}}) \leq \TC_{\red{r}}(\und{X}^K)$. Now, in general, we only have the inequality $$\TC_{\red{r}}(X^{\sigma_{\red{0}}}) \leq \red{\max_{\sigma=[i_1,\ldots,i_n]}}
\left\{\rule{0mm}{4mm}\TC_{\red{r}}(X_{i_1})+\cdots + \TC_{\red{r}}(X_{i_n})\right\}$$ but because of the current hypotheses, we obtain equality. Hence, $$\max_{\sigma=[i_1,\ldots,i_n]}\left\{\rule{0mm}{4mm}\TC_{\red{r}}(X_{i_1})+\cdots + \TC_{\red{r}}(X_{i_n})\right\} = \TC_{\red{r}}(X^{\sigma_{\red{0}}}) \leq \TC_{\red{r}}(\und{X}^K)$$ and we are done.
\end{proof}

\noindent Say that $X$ is $\TC_{\red{r}}$-logarithmic if $\TC_{\red{r}}(X^s)=s\,\TC_{\red{r}}(X)$ for all $s \geq 2$.

\begin{corollary}\label{cor:polyTC}
For a family $\{(X,*)\}_{i=1}^m$ of $m$-copies of the same based space $X$ and a simplicial complex $K$ on $[m]$ vertices, if
$\TC_{\red{r}}(X)=\red{r}\,\cat(X)$ and $X$ is both LS- and $\TC_{\red{r}}$-logarithmic, then
$$\TC_{\red{r}}(X^K)=\TC_{\red{r}}(X)\cdot(1+\dim(K))=\red{r}\,\cat(X^K).$$
\end{corollary}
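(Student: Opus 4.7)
The plan is to derive Corollary~\ref{cor:polyTC} as a direct specialization of Theorem~\ref{thm:polyTC} to the case in which all the factor spaces $X_i$ coincide with a single space $X$. The first step is to verify that the three hypotheses of Theorem~\ref{thm:polyTC} are satisfied by the constant family $\{(X,\ast)\}_{i=1}^m$ under the assumptions on $X$. Condition (1) is immediate: each $X_i$ equals $X$ and we are assuming $\TC_r(X)=r\,\cat(X)$. Conditions (2) and (3) reduce, in this setting, to the statements $\cat(X^k)=k\,\cat(X)$ and $\TC_r(X^k)=k\,\TC_r(X)$ for each subcollection of size $k$, which are exactly the LS- and $\TC_r$-logarithmicity of $X$ (note that the condition $i_r\neq i_s$ for $r\neq s$ in Theorem~\ref{thm:polyTC} does not force $k$ to be small here because the polyhedral product structure only uses the indexing of the family).

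With the hypotheses in place, Theorem~\ref{thm:polyTC} yields
$$
\TC_r(X^K) \;=\; r\,\cat(X^K) \;=\; \max_{\sigma=[i_1,\ldots,i_n]\in K}\left\{\TC_r(X_{i_1})+\cdots+\TC_r(X_{i_n})\rule{0mm}{4mm}\right\}.
$$
The second step is to simplify this maximum. Since every $X_{i_j}$ equals $X$, the sum $\TC_r(X_{i_1})+\cdots+\TC_r(X_{i_n})$ depends only on the cardinality $n=|\sigma|$ and equals $n\cdot \TC_r(X)$. The quantity $|\sigma|$ over all simplices of $K$ is maximized by any maximal simplex of top dimension, giving $n=1+\dim(K)$. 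Therefore the maximum equals $(1+\dim(K))\cdot\TC_r(X)$, and combining this with the chain of equalities above delivers
$$
\TC_r(X^K)\;=\;\TC_r(X)\cdot(1+\dim(K))\;=\;r\,\cat(X^K),
$$
which is precisely the asserted formula.

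There is essentially no obstacle here: the corollary is a routine unpacking of Theorem~\ref{thm:polyTC} once one observes that LS- and $\TC_r$-logarithmicity of the single space $X$ passes trivially to the constant family. The only mildly subtle point worth spelling out in the write-up is why the maximum over all simplices of $K$ of the quantity $|\sigma|\cdot\TC_r(X)$ is achieved on a simplex of top dimension, namely because $\TC_r(X)\geq 0$ so the maximum is monotone in~$|\sigma|$.
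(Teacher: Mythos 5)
Your proof is correct, and it is the intended derivation: the paper states the corollary immediately after Theorem~\ref{thm:polyTC} with no separate argument, precisely because specializing to the constant family and observing that the maximum is $(1+\dim K)\cdot\TC_r(X)$ is the whole content. Your remark about the distinct-index condition in the logarithmicity hypotheses is a reasonable sanity check, though it is automatic since a constant family trivially satisfies the family-level condition whenever the single space is logarithmic.
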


\begin{remark}
In \cite[Theorem 5.3]{GGY}, a result similar to \thmref{thm:polyTC} was proven using the hypotheses that, for each $i$,  $\TC_{\red{r}}(X_i)$ is given
by rational \red{$r$-th} zero-divisor cup-length and it equals twice the cone-length. Such hypotheses imply that the family $\{(X_i,*)\}_{i=1}^m$ is both LS- and 
$\TC_r$-logarithmic. In this sense, \thmref{thm:polyTC} is a generalization of that of \cite{GGY}.
\end{remark}

\red{Example~\ref{familyofgraphs} gives the LS-logarithmicity hypothesis in Theorem~\ref{thm:polyTC} for families of RAA groups; the $\TC$-logarithmicity hypothesis also holds:}

\begin{proposition}\label{prop:TCRAA}
\red{For any family $\{\Gamma_i\}_{i=1}^m$ of simplicial graphs,}
$$\TC_r({\Gamma_1} \ast \cdots \ast {\Gamma_m}) =\TC_r(\Gamma_1)+\cdots+\TC_r(\Gamma_m).$$
\end{proposition}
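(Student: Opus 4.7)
The plan is to convert Proposition~\ref{prop:TCRAA} into a purely combinatorial identity via Theorem~\ref{GGY}. Writing $\Gamma:=\Gamma_1\ast\cdots\ast\Gamma_m$, that theorem reduces the claim to
$$z_r(\Gamma)\,=\,z_r(\Gamma_1)+\cdots+z_r(\Gamma_m).$$
The key structural observation---already implicit in Example~\ref{familyofgraphs}---is that a subset $D\subseteq V_\Gamma=\bigsqcup_i V_{\Gamma_i}$ is a clique of $\Gamma$ if and only if each restriction $C_i:=D\cap V_{\Gamma_i}$ is a clique of $\Gamma_i$. Pairwise disjointness of the vertex sets then gives $|D|=\sum_i|C_i|$, and for any family $D_1,\ldots,D_r$ of subsets of $V_\Gamma$ the set-theoretic identity
$$\bigcap_{j=1}^{r}D_j\,=\,\bigsqcup_{i=1}^{m}\bigcap_{j=1}^{r}\bigl(D_j\cap V_{\Gamma_i}\bigr)$$
holds.

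For the ``$\geq$'' direction, I would pick, for each $i$, an optimal tuple $(C_{i,1},\ldots,C_{i,r})$ of cliques of $\Gamma_i$ with $\bigcap_j C_{i,j}=\varnothing$ and $\sum_j|C_{i,j}|=z_r(\Gamma_i)$, and set $D_j:=\bigcup_i C_{i,j}$. Each $D_j$ is a clique of $\Gamma$ by the structural observation, the displayed identity forces $\bigcap_j D_j=\varnothing$, and summing sizes yields $z_r(\Gamma)\geq\sum_i z_r(\Gamma_i)$.

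For the reverse direction, I would take an optimal tuple $(D_1,\ldots,D_r)$ realizing $z_r(\Gamma)$ and decompose $D_j=\bigsqcup_i C_{i,j}$ along the vertex partition. The same identity forces $\bigcap_j C_{i,j}=\varnothing$ for \emph{every} $i$, so each slice $(C_{i,1},\ldots,C_{i,r})$ is admissible for $z_r(\Gamma_i)$, yielding $\sum_j|C_{i,j}|\leq z_r(\Gamma_i)$; summing over $i$ closes the inequality.

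The main point to watch is that the decomposition in the reverse direction may produce some empty $C_{i,j}$. This is harmless once one adopts the (standard) convention that $z_r$ is taken over $r$-tuples of cliques \emph{possibly including the empty clique}; this does not alter the value of $z_r$, since empty cliques contribute $0$ to the sum. Beyond this mild bookkeeping no real obstacle arises.
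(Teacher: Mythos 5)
Your proof is correct, and the upper-bound half takes a genuinely different route from the paper's. The lower-bound direction $z_r(\Gamma)\geq\sum_i z_r(\Gamma_i)$ coincides with the paper's argument: both assemble a clique tuple in the join by uniting (equivalently, joining) optimal clique tuples from the factors. For the upper bound, however, the paper does not reason combinatorially at all: it reduces to $m=2$ and invokes the \emph{topological} subadditivity $\TC_r(\Gamma_1*\Gamma_2)\leq\TC_r(\Gamma_1)+\TC_r(\Gamma_2)$, which comes from $H_{\Gamma_1*\Gamma_2}\cong H_{\Gamma_1}\times H_{\Gamma_2}$ together with the general product inequality for $\TC_r$, and then squeezes the chain of inequalities into equalities. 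You instead slice an optimal tuple realizing $z_r(\Gamma_1*\cdots*\Gamma_m)$ along the vertex partition and observe that the empty-total-intersection condition descends to every factor; this gives $z_r(\Gamma)\leq\sum_i z_r(\Gamma_i)$ purely combinatorially. Your version is thus more self-contained (no appeal to topology beyond \thmref{GGY} itself) and handles general $m$ in one pass rather than by induction. The empty-clique bookkeeping you flag is genuinely needed and correctly resolved: allowing $\varnothing$ as a clique does not change $z_r$ and is in fact forced by Definition~\ref{def:zr} already (for example, for a one-vertex graph $z_r=r-1$ requires one empty clique in the tuple). Both approaches buy the same result; yours makes transparent that \propref{prop:TCRAA} is an elementary identity about cliques in graph joins, while the paper's shows it dropping out of general $\TC_r$ machinery plus only the lower-bound construction.
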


\begin{proof}
\red{It suffices to consider the case $m=2$. Let} $\Gamma$ and ${\Gamma'}$ be simplicial graphs \red{and choose} $C_1, \ldots, C_r$ and $D_1,\ldots,D_r$ cliques in $\Gamma$ and $\Gamma'$ respectively with $\cap_{i=1}^r C_i = \varnothing$
and $\cap_{i=1}^r D_i = \varnothing$ that realize $\TC_r(\Gamma)$ and  $\TC_r({\Gamma'})$. Let $E_i = C_i * D_i$ (where $*$
again denotes the graph join). Clearly, $\cap_{i=1}^r E_i = \varnothing$, \red{for} all $C_i$ are pairwise disjoint from all $D_j$ and the total
intersections of the $C_i$ and $D_i$ are individually empty. Then, using Theorem \ref{GGY}, we obtain
\begin{align*}
 \sum_{i=1}^r |C_i| + \sum_{i=1}^r |D_i| & = \sum_{i=1}^r |E_i| \leq \TC_r(\Gamma \ast {\Gamma'}) \\
& \leq \TC_r(\Gamma) + \TC({\Gamma'}) = \sum_{i=1}^r |C_i| + \sum_{i=1}^r |D_i|.
\end{align*}
Hence, all inequalities are equalities and we are done.
\end{proof}

The validity of the first hypothesis in Theorem \ref{thm:polyTC} for families of RAA groups depends on the actual structure of the graphs~$\Gamma_i$. \red{For instance,} if a graph $\Gamma$ has \red{two disjoint cliques of the maximal cardinality} (as is the case for $D_v(\Gamma)$ in the first part of Theorem \ref{thm:TCdouble}), then $\TC_{\red{r}}(\Gamma)=\red{r}\,\cd(\Gamma)$, in view of Theorem~\ref{GGY}. \red{Thus:}

\begin{corollary}\label{cor:RAApolyTC}
If $\{\Gamma_i\}_{i=1}^n$ is a family of simplicial graphs such that each $\Gamma_i$ has two disjoint \red{cliques of the maximal cardinality, then for any}
simplicial complex $K$,
$$\TC_{\red{r}}\left(\und{\hspace{.3mm}\Gamma\hspace{.3mm}}^K\right) = \max_{\sigma=[i_1,\ldots,i_n]}\left\{\TC_{\red{r}}({\Gamma_{i_1}})+\cdots + \TC_{\red{r}}({\Gamma_{i_n}})\rule{0mm}{4mm}\right\},$$
where the maximum is taken over all maximal simplices $\sigma$ of $K$. Here we follow the notation in Example~\ref{familyofgraphs}, in particular $\Gamma=\Gamma_1\ast\cdots\ast\Gamma_m$.
\end{corollary}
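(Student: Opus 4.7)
The plan is to deduce Corollary \ref{cor:RAApolyTC} by invoking Theorem \ref{thm:polyTC} applied to the family $X_i := K(H_{\Gamma_i},1)$, so that $\und{\hspace{.3mm}\Gamma\hspace{.3mm}}^K$ coincides with $\und{X}^K$. The problem then reduces to verifying the three hypotheses of Theorem \ref{thm:polyTC} for this family, after which the conclusion of that theorem is precisely the formula claimed in Corollary \ref{cor:RAApolyTC}.

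Two of the three hypotheses are already available. The LS-logarithmicity of $\{(K(H_{\Gamma_i},1),\ast)\}$ is established in Example \ref{familyofgraphs}: the graph-join identity $c(\Gamma_{i_1}\ast\cdots\ast\Gamma_{i_k})=\sum_s c(\Gamma_{i_s})$, combined with (\ref{cdccatem}), yields the required additivity of $\cat$ on products of the classifying spaces corresponding to any subcollection. The $\TC_r$-logarithmicity hypothesis is the content of Proposition \ref{prop:TCRAA}, whose argument applies verbatim to any subcollection of the $\Gamma_i$.

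The remaining hypothesis is (1), namely $\TC_r(X_i)=r\cdot \cat(X_i)$ for every $i$, and this is where the disjoint maximal-cliques assumption is used. Let $C_i$ and $C_i'$ be two disjoint cliques of maximal cardinality $c(\Gamma_i)$ in $\Gamma_i$. The $r$-tuple $(C_i',C_i,C_i,\ldots,C_i)$ has empty total intersection, since $C_i\cap C_i'=\varnothing$, and total cardinality $r\cdot c(\Gamma_i)$. By Definition \ref{def:zr} this forces $z_r(\Gamma_i)\geq r\cdot c(\Gamma_i)$; the reverse bound is automatic because no clique of $\Gamma_i$ exceeds $c(\Gamma_i)$ vertices. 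Theorem \ref{GGY} therefore gives $\TC_r(\Gamma_i)=r\cdot c(\Gamma_i)$, which equals $r\cdot \cat(X_i)$ thanks to (\ref{cdccatem}).

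With all three hypotheses in place, Theorem \ref{thm:polyTC} yields the desired formula directly. I do not expect a genuine obstacle here: the argument is essentially an assembly of previously established ingredients, the only substantive observation being the elementary fact that two disjoint maximal cliques saturate $z_r$ at $r\cdot c(\Gamma_i)$, which is what upgrades Theorem \ref{GGY} to the equality $\TC_r(X_i)=r\cdot \cat(X_i)$ required by hypothesis (1) of Theorem \ref{thm:polyTC}.
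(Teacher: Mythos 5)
Your proposal is correct and follows the same route as the paper: apply Theorem~\ref{thm:polyTC} with $X_i=K(H_{\Gamma_i},1)$, supply hypotheses (2) and (3) from Example~\ref{familyofgraphs} and Proposition~\ref{prop:TCRAA}, and derive hypothesis (1) from the disjoint-maximal-cliques assumption via $z_r(\Gamma_i)=r\cdot c(\Gamma_i)$ and Theorem~\ref{GGY}. The paper in fact treats this as an immediate consequence, so your slightly more explicit justification of the $z_r$ computation is a welcome expansion rather than a deviation.
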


It would be interesting to know whether one or both of the logarithmicity hypotheses could be waived in Theorem~\ref{thm:polyTC}. The hypothesis $\TC_r=r\, \cat$ for the polyhedral product factors however, plays a crucial role even for $\TC_2$, as will be clear in the next and final section.

We close the section with a general lower estimate (used later in the paper) for the topological complexity of polyhedral products. We need:
\begin{theorem}[{\cite[Corollary 2.4]{GLO2}}]\label{thm:glo}
If $F \to E \to B$ is a fibration of connected CW complexes admiting a (homotopy) section, then $\cat(B \times F \times E^{r-2}) \leq \TC_r(E)$.
\end{theorem}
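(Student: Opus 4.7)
The plan is to imitate the classical pullback-of-sections proof that yields $\cat(E^{r-1})\leq \TC_r(E)$, replacing the naive inclusion $E^{r-1}\hookrightarrow E^r$ by a map that ``spends'' the section $s\colon B\to E$ together with the fiber inclusion $i\colon F\hookrightarrow E$ in order to trade two copies of $E$ for one copy of $B$ and one copy of $F$.

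Concretely, I would start from an optimal sectional cover $U_0,\ldots,U_k$ of $E^r$ (with $k=\TC_r(E)$), together with continuous local sections $\sigma_j\colon U_j\to E^I$ of $p_r$, and introduce the map
$$\Phi\colon B\times F\times E^{r-2}\longrightarrow E^r,\qquad \Phi(b,f,e_2,\ldots,e_{r-1})=\bigl(s(b),\, i(f),\, e_2,\ldots,e_{r-1}\bigr).$$
The preimages $V_j:=\Phi^{-1}(U_j)$ form an open cover of $Y:=B\times F\times E^{r-2}$ of cardinality $k+1$, and for $(b,f,\vec e)\in V_j$ the path $\gamma:=\sigma_j(\Phi(b,f,\vec e))\in E^I$ satisfies $\gamma(0)=s(b)$, $\gamma(1/(r-1))=i(f)$, and $\gamma(m/(r-1))=e_m$ for $m\geq 2$. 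I would then use $\gamma$ to build, coordinate by coordinate, a contracting homotopy $H_j\colon V_j\times I\to Y$.

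The $B$-coordinate slides to the basepoint $b_0$ along $p\circ\gamma|_{[0,1/(r-1)]}$. Each $E$-coordinate $e_m$ ($m\ge 2$) is first carried back to $s(b)$ by reversing the corresponding initial segment of $\gamma$ and then pushed to $s(b_0)$ along $s\circ p\circ\gamma|_{[0,1/(r-1)]}$. For the $F$-coordinate, the concatenation $\overline{\gamma|_{[0,1/(r-1)]}}\cdot\bigl(s\circ p\circ\gamma|_{[0,1/(r-1)]}\bigr)$ is a path in $E$ from $i(f)$ to $s(b_0)$ whose $p$-image is the null-homotopic loop $\overline{\beta}\cdot\beta$ at $b_0$, where $\beta=p\circ\gamma|_{[0,1/(r-1)]}$. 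A canonical null-homotopy of $\overline{\beta}\cdot\beta$ in $B$, lifted to $E$ via a fixed Hurewicz path-lifting function for $p$ while the endpoints $i(f)$ and $s(b_0)$ are held fixed, deforms this concatenation to a path lying entirely in the fiber $F$. Assembling the three coordinate contractions produces $H_j$ with $H_j(\cdot,0)$ the inclusion and $H_j(\cdot,1)$ the constant $(b_0,s(b_0),s(b_0),\ldots,s(b_0))\in Y$, showing each $V_j$ is categorical in $Y$, whence $\cat(Y)\leq k=\TC_r(E)$.

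The hard part will be making the $F$-coordinate contraction continuous in $(b,f,\vec e)$: this requires a parameterized application of the homotopy lifting property of $p$ with both endpoints pinned to the varying points $i(f)$ and $s(b_0)$. Fixing once and for all a continuous Hurewicz lifting function for $p$, together with a continuous canonical null-homotopy for loops of the form $\overline{\beta}\cdot\beta$ in $B$, suffices to produce the lift continuously in the data; this is standard for fibrations of CW complexes. The conceptual content of the argument is the choice of $\Phi$ and the recognition that the section-plus-fiber-inclusion data is exactly what is needed to convert the single path $\gamma$ into simultaneous contractions in the three distinct factors $B$, $F$, and $E^{r-2}$ of $Y$.
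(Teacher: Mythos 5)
Note that the paper does not prove this statement --- Theorem~\ref{thm:glo} is cited directly from \cite[Corollary~2.4]{GLO2} --- so there is no in-paper argument to compare against. Your reconstruction is correct and, to my knowledge, follows the same pullback mechanism used in \cite{GLO2}: pull back a $\TC_r$-cover of $E^r$ along $\Phi(b,f,e_2,\ldots,e_{r-1})=(s(b),i(f),e_2,\ldots,e_{r-1})$ and turn each local section $\gamma$ into coordinate-wise contractions of $V_j$ inside $B\times F\times E^{r-2}$, the crux being the $F$-coordinate, where one lifts a null-homotopy of $\overline{\beta}\cdot\beta$ in $B$ rel both endpoints. Two points should be made explicit. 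First, you tacitly use a strict section (for instance when you compute $p\circ s\circ\beta=\beta$, needed to see that the concatenation projects to $\overline{\beta}\cdot\beta$); the hypothesis only grants a homotopy section, but lifting a homotopy $p\circ s\simeq\mathrm{id}_B$ through the fibration $p$ replaces $s$ by a strict one, so this is harmless once stated. Second, the lift rel both endpoints is the lifting property of the Hurewicz fibration $p$ against the acyclic cofibration $I\times\{0\}\cup\{0,1\}\times I\hookrightarrow I\times I$; choosing a fixed Hurewicz lifting function makes it continuous in the data, and you rightly flag this as the technical heart of the argument. With these points addressed, the proof is complete.
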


\begin{corollary}\label{cor:glo}
\red{Let} $\sigma_1,\ldots,\sigma_r$ \red{be an} $r$-tuple of simplices in $K$ such that at least two simplices are disjoint. Then, \red{for any family $\{(X_i,\ast)\}_{i=1}^m$,}
$$\cat(\red{{X}^{\sigma_1}\times {X}^{\sigma_2}\times\cdots\times{X}^{\sigma_r}}) \leq \TC_r(\red{\und{X}}^K).$$
\red{Here ${X}^{\sigma_i}$ stands for the product $\prod_{j\in\sigma_i}X_j$ (see the proof of Corollary~\ref{lem:upbound}).}
\end{corollary}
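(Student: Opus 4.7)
The plan is to apply Theorem~\ref{thm:glo} to a fibration replacement of the canonical retraction $r\colon\und{X}^K\to X^{\sigma_2}$, after reordering (harmlessly, by invariance of LS-category under factor permutations) so that the two disjoint simplices are $\sigma_1$ and $\sigma_2$. Set $E:=\und{X}^K$ and $B:=X^{\sigma_2}$, and let $r\colon E\to B$ be the retraction collapsing all coordinates outside $\sigma_2$ to the basepoint. The subproduct inclusion $B\hookrightarrow E$ is a strict section of $r$; converting $r$ to a fibration via the mapping path construction yields a fibration $\tilde r\colon E'\to B$ with $E'\simeq E$ admitting a homotopy section, so Theorem~\ref{thm:glo} applies.

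The next step is to exhibit $X^{\sigma_1}$ as a homotopy retract of the homotopy fiber $F$ of $\tilde r$. Since $\sigma_1\cap\sigma_2=\varnothing$, every point of $X^{\sigma_1}\subseteq E$ has its $\sigma_2$-coordinates equal to the basepoint, so $X^{\sigma_1}$ lies inside the strict fiber $r^{-1}(\ast)$. The assignment $x\mapsto (x,c_\ast)$, where $c_\ast$ is the constant path at the basepoint of $B$, gives a continuous map $\iota\colon X^{\sigma_1}\to F$; composing the projection $F\to E$ with the standard subproduct retraction $E\to X^{\sigma_1}$ produces a left inverse for $\iota$.

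Theorem~\ref{thm:glo} then yields
$$\cat(B\times F\times E^{r-2})\leq \TC_r(E).$$
Because $X^{\sigma_1}$ retracts off $F$, $X^{\sigma_2}=B$, and each $X^{\sigma_j}$ with $j\geq 3$ retracts off $E$ by the same subproduct/collapse argument, the product $X^{\sigma_1}\times X^{\sigma_2}\times\cdots\times X^{\sigma_r}$ is a retract of $B\times F\times E^{r-2}$; monotonicity of $\cat$ under retracts closes the argument. The main delicacy is ensuring that $X^{\sigma_1}$ sits as a retract of the \emph{homotopy} fiber rather than merely the strict fiber, but with the explicit mapping path model this is immediate from the fact that $X^{\sigma_1}\subseteq r^{-1}(\ast)$.
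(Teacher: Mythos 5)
Your proof is correct and follows essentially the same route as the paper: both apply Theorem~\ref{thm:glo} to the retraction onto one of the two disjoint subproducts, identify the other disjoint subproduct as a (homotopy) retract of the homotopy fiber, and use the remaining copies of $\und{X}^K$ in the product for the $\sigma_j$ with $j\geq 3$. The only cosmetic difference is that you take $B = X^{\sigma_2}$ while the paper takes $B = X^{\sigma_1}$, which by symmetry is immaterial.
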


\begin{proof}
Without loss of generality take $\sigma_1 \cap \sigma_2 = \varnothing$. Consider the projection $p_1\colon \red{\und{X}}^K \to \red{{X}}^{\sigma_1}$ with homotopy fibre $F$.
The inclusion $i_1\colon \red{{X}}^{\sigma_1} \to \red{\und{X}}^K$ satisfies $p_1 \circ i_1 = \mathrm{id}$, so
$\red{{X}}^{\sigma_1}$ is a retract of $\red{\und{X}}^K$. Hence
we may consider
$i_1$ to be a homotopy section of the homotopy fibration $p_1$. By \thmref{thm:glo},
\begin{equation}\label{eqGLO}
\cat(\red{{X}}^{\sigma_1} \times F \times (\red{\und{X}}^K)^{r-2}) \leq \TC_r(\red{\und{X}}^K).
\end{equation}
The inclusion $i_2\colon \red{{X}}^{\sigma_2} \to \red{\und{X}}^K$ (with associated retraction $p_2\colon \red{\und{X}}^K \to \red{{X}}^{\sigma_2}$) \red{has} $p_1 \circ i_2 = *$, since $\sigma_1 \cap \sigma_2 = \varnothing$, so $i_2$ factors \red{up to homotopy} through the fibre $F$. Moreover, composing the resulting \red{homotopy class ${X}^{\sigma_2}\to F$} with the fibre
inclusion and $p_2$ gives the identity on $\red{{X}}^{\sigma_2}$ \red{(for $p_2\circ i_2=\text{id}$)}, so $\red{{X}}^{\sigma_2}$ is a \red{homotopy} retract of $F$. Of course, it is also the case that
$\red{{X}}^{\sigma_3} \times \cdots \times \red{{X}}^{\sigma_r}$ is a retract of $(\red{\und{X}}^K)^{r-2}$ as well, so $\red{{X}}^{\sigma_1} \times \cdots \times \red{{X}}^{\sigma_r}$
is a \red{homotopy} retract of $\red{{X}}^{\sigma_1} \times F \times (\red{\und{X}}^K)^{r-2}$. \red{Thus}
$$\cat(\red{{X}}^{\sigma_1} \times \cdots \times \red{{X}}^{\sigma_r}) \leq \cat(\red{{X}}^{\sigma_1} \times F \times (\red{\und{X}}^K)^{r-2}) \leq \TC_r(\red{\und{X}}^K),$$
\red{which yields the result.}
\end{proof}

%\red{Under LS-logarithmicity conditions, the conclusion in \corref{cor:glo} is
%$$\sum_{i=1}^r\sum_{j\in \sigma_i}\cat(X_j) \leq \TC_r(\red{\und{X}}^K).$$}

\section{Real projective spaces}\label{sec:rpss}
For a family $\und{P}=\{P^{n_i}\}_{i=1}^m$ of real projective spaces, and a simplicial complex $K$ with vertices $[m]$, set
\begin{equation}\label{lanorma}
\norma{(n_1,\ldots,n_m)}:=\max\left\{\sum_{i\in \sigma_{1}\bigtriangleup\sigma_{2}}\!\!\!n_i+\sum_{i\in \sigma_1\cap\sigma_{2}}\!\!\TC(\RP^{n_i}):\sigma_{1},\sigma_{2}\in K\right\},
\end{equation}
where $\sigma_{1}\bigtriangleup\sigma_{2}=(\sigma_{1}\setminus\sigma_{2})\cup(\sigma_{2}\setminus\sigma_{1})$, the symmetric difference of $\sigma_1$ and $\sigma_2$. In this terms, Theorem~\ref{Upper bound TC of Z(Pn,K)} reads:
\begin{equation}\label{traducido}
\TC(\und{P}^K)\leq\norma{(n_1,\ldots,n_m)}.
\end{equation}

Since $\TC(P^n)<2n=2\,\cat(P^n)$, Theorem~\ref{Upper bound TC of Z(Pn,K)} is an improvement (for families of real projective spaces) of the upper estimate in Corollary~\ref{cor:TCupbound} (for $r=2$). On the other hand, by taking $\sigma_1=\sigma_2$ in~(\ref{lanorma}), we see that the value that Theorem~\ref{thm:polyTC} would predict for $\TC(\und{P}^K)$ could turn out to be smaller than the actual value of this invariant (of course hypothesis (1) in Theorem~\ref{thm:polyTC} does not hold for the family $\und{P}$). Example~\ref{ejemplomezclado} below illustrates a concrete such situation.

The inequality in~(\ref{traducido}) is sharp in a number of situations. The simplest one is when each $P^{n_i}$ is parallelizable, for then $\TC(P^{n_i})=n_i$, so equality in Theorem~\ref{Upper bound TC of Z(Pn,K)} follows from Corollary~\ref{cor:glo} and the obvious fact that families of real projective spaces are LS-logarithmic. More generally, Proposition~\ref{mixedlow} implies that~(\ref{traducido}) is optimal as long as, for each $i$, $\TC(P^{n_i})$ agrees with $\zcl(P^{n_i})$, the zero-divisors cup-length of $P^{n_i}$ with mod-2 coefficients, i.e., the maximal number of elements $z_j$ in the kernel of the cup-multiplication $H^*(P^{n_i})\otimes H^*(P^{n_i})\to H^*(P^{n_i})$ with $\prod z_j\neq0$.

\begin{remark}
It is known that the equality $\TC(P^n)=\zcl(P^n)$ holds precisely in the following (non-mutually exclusive) cases: (1) $n\in\{1,3,7\}$, i.e., $P^n$ is parallelizable; (2) $n=2^e+\delta$ with $e\geq1$ and $\delta=0,1$; (3) $n=6$.
%Yet, it is not difficult to cook up examples where agreement between our upper and lower estimates for $\TC(\und{P}^K)$ holds even though the equality $\TC(P^{m_i})=\zcl(P^{m_i})$ fails for some indices $i$.
\end{remark}

Since $n\leq\zcl(P^n)\leq\TC(P^n)$ for any $n$, both maximums in Theorem~\ref{Upper bound TC of Z(Pn,K)} and Proposition~\ref{mixedlow} are always realized by maximal simplices $\sigma_1$ and $\sigma_2$ of $K$, i.e., simplices that are not contained in any other simplex of $K$. Nontheless, it can be the case that these maximums are realized by sums that necessarily involve \emph{both} $\cat$ and $\TC$ terms. In other words, our estimates exhibit a mixed $\cat$/$\TC$ behavior not present for RAA groups. For instance:

\begin{example}\label{ejemplomezclado}
Let $K=\partial\Delta^2$, the simplicial complex with vertices $\{1,2,3\}$ and facets $\{1,2\}$, $\{1,3\}$ and $\{2,3\}$, and take $n_1=n_2=n_3=2^e$ with $e\geq0$. Since $\TC(P^{2^e})=2^{e+1}-1$, Theorem~\ref{Upper bound TC of Z(Pn,K)} and Proposition~\ref{mixedlow} yield $\TC(\und{P}^K)=2^{e+2}-1$, a maximum that is realized only with \emph{different} maximal simplices $\sigma_1$ and $\sigma_2$, so that $|\sigma_1\cap\sigma_2|=1$ and $|\sigma_{1}\bigtriangleup\sigma_{2}|=2$.
\end{example}

Proposition~\ref{dranishnikovgeneralizado} below shows that~(\ref{traducido}) is also sharp if $\dim(K)=0$ (for unrestricted $n_i$'s, and with no mixed $\cat$/$\TC$ behavior). This is proved using~\cite[Theorem~6]{DraSad} by induction on the number of vertices of $K$. The easy proof details are left as an exercise for the interested reader.

\begin{proposition}\label{dranishnikovgeneralizado}
If $n_1\geq n_2\geq \cdots\geq n_m$, then
$$\TC\left(\bigvee_iP^{n_i}\right)=\max\left\{n_1+n_2,\TC(P^{n_1})\right\}.$$
\end{proposition}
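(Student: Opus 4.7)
The plan is to induct on $m\geq 2$ using the Dranishnikov--Sadykov wedge formula \cite[Theorem~6]{DraSad}, which asserts that for normal CW complexes $A$ and $B$,
$$\TC(A\vee B)=\max\{\TC(A),\,\TC(B),\,\cat(A\times B)\}.$$
Two auxiliary facts will force every maximum to collapse to the claimed expression: (i) for real projective spaces, $\cat(P^a\times P^b)=a+b$, witnessed by the nonzero top cup product of the mod $2$ Stiefel--Whitney classes pulled back from each factor; and (ii) the elementary bound $\TC(P^n)\leq 2n$, which follows from (i) and Theorem~\ref{thm:bounds} applied with $r=2$. I shall also use the standard wedge identity $\cat(A\vee B)=\max\{\cat(A),\cat(B)\}$ (combining categorical covers in one direction and using retractions in the other) together with the product inequality $\cat(A\times B)\leq\cat(A)+\cat(B)$ from Proposition~\ref{prop:prodinequal}.

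For the base case $m=2$, fact (ii) combined with $n_1\geq n_2$ yields $\TC(P^{n_2})\leq 2n_2\leq n_1+n_2$, so the wedge formula together with (i) reduces to $\TC(P^{n_1}\vee P^{n_2})=\max\{\TC(P^{n_1}),\,n_1+n_2\}$. For the inductive step, set $W_m:=\bigvee_{i=1}^m P^{n_i}$, decompose $W_m=W_{m-1}\vee P^{n_m}$, and apply Dranishnikov--Sadykov once more. Invoking the inductive hypothesis for $\TC(W_{m-1})$, the wedge identity for $\cat$ (which gives $\cat(W_{m-1})=n_1$), and the product inequality, the last slot of the outer maximum is bounded by $\cat(W_{m-1})+\cat(P^{n_m})=n_1+n_m\leq n_1+n_2$, while the middle slot satisfies $\TC(P^{n_m})\leq 2n_m\leq n_1+n_2$ by (ii). Every new term introduced at step $m$ is thus absorbed into $n_1+n_2$, and the identity $\TC(W_m)=\max\{n_1+n_2,\TC(P^{n_1})\}$ follows, closing the induction.

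The argument presents no real obstacle; its entire content is the absorption mechanism that makes the answer independent of $m$ once $m\geq 2$. As a sanity check entirely within the framework of this paper, the upper bound $\TC(\und{P}^K)\leq\max\{n_1+n_2,\TC(P^{n_1})\}$ drops out directly from Theorem~\ref{Upper bound TC of Z(Pn,K)} applied to the zero-dimensional $K=\{1,\ldots,m\}$ (with the cross terms $\TC(P^{n_i})$ for $i\geq 2$ again dominated via (ii)), and the matching lower bound then follows from Corollary~\ref{cor:glo} evaluated on any pair of disjoint vertices of $K$ combined with the retraction $\und{P}^K\to P^{n_1}$.
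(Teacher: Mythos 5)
Your proof is correct and follows precisely the route the paper itself indicates but leaves as an exercise: induction on the number of wedge summands, using the Dranishnikov--Sadykov formula $\TC(A\vee B)=\max\{\TC(A),\TC(B),\cat(A\times B)\}$ from \cite[Theorem~6]{DraSad}, with the absorption into $n_1+n_2$ driven by $\TC(P^{n})\leq 2n$, $\cat(P^a\times P^b)=a+b$, and the product and wedge inequalities for $\cat$. Nothing further is needed.
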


\begin{proof}[Proof of Theorem~\ref{higherproj}]
This is a consequence of Theorem~\ref{thm:polyTC}; we need only check the three relevant hypotheses. The LS-logarithmicity hypothesis is obvious, while the $\TC_r=r\,\cat$ condition is given by~\cite[Equation (5.2) and Theorem~5.7]{CGGGL}. Namely, under the present hypotheses (large $r$),
$$
r\cdot n_i=\TC_r(P^{n_i})=r\,\cat(P^{n_i})=\zcl_r(P^{n_i}),
$$
for all $i$, where $\zcl_r$ stands for $r$-th zero-divisors cup-length, see~\cite[Definition~3.8]{BGRT}. Lastly, the $\TC_r$-hypothesis follows from
$$
r\!\left(\sum_{j=1}^\ell n_{i_j}\!\!\right)\!\hspace{-.2mm}\geq \hspace{-.2mm}\TC_r\!\left(\prod_{j=1}^\ell\! P^{n_{i_j}}\!\!\right)\!\hspace{-.2mm}\geq\hspace{-.2mm}\zcl_r\!\left(\prod_{j=1}^\ell\! P^{n_{i_j}}\!\!\right)\!\hspace{-.2mm}\geq\hspace{-.2mm}\sum_{j=1}^\ell\zcl_r(P^{n_{i_j}})\hspace{-.2mm}=\hspace{-.2mm}r\!\left(\sum_{j=1}^\ell n_{i_j}\!\!\right)\hspace{-.8mm},
$$
where the first inequality comes from Theorem~\ref{thm:bounds}, the second one comes from~\cite[Theorem~3.9]{BGRT}, and the third one comes from~\cite[Lemma 2.1]{FaCo}.
\end{proof}

\subsection{Proof of Proposition~\ref{mixedlow}}
Recall from \cite[Theorem 2.35]{Gitler} that the mod-2 cohomology ring of $\und{P}^K$ is
$$
H^{*}(\und{P}^K)\cong \bigotimes_{i=1}^{m}H^{*}(\RP^{n_i})/I_K.
$$
Here $I_K$ is the generalized Stanley-Reisner ideal generated by all elements $x_{r_1}\otimes x_{r_2}\otimes\cdots\otimes x_{r_t},$ satisfying $x_{r_{i}}\in \overline{H}^{\,*}(\RP^{n_{r_i}})$ and $\{r_{1},\ldots,r_{t}\}\notin{K}$. For each $i\in[m]$, let $u_{i}\in H^*(\und{P}^K)$ be the pullback of the generator of $H^{1}(\RP^{n_i})$ under the canonical projection $\und{P}^K\to \RP^{n_i}$ onto the $i$-th coordinate. In these terms, $I_K$ is generated by the powers $u_{i}^{n_{i}+1}$ and the products $u_{r_1}\cdots u_{r_t}$ with $\{r_1,\ldots,r_t\}\notin{K}$.
In particular, a graded basis for $H^{*}(\und{P}^K)$ is given by the monomials
$
u_{1}^{e_1}\cdots u_{m}^{e_m}
$
having $0\leq e_{i}\leq n_i$ and $\{i: e_{i}>0\}\in{K}$. Note that $u_{i}\otimes 1+1\otimes u_{i}\in [H^{*}(\und{P}^K)]^{\otimes 2}$ is a zero divisor for each $i\in\{1,\ldots,m\}$.

Let $\sigma_{1},\sigma_{2}\in{K}$, say with $\sigma_{1}\setminus\sigma_{2}=\{i_1,\ldots,i_{r}\}$, $\sigma_{2}\setminus\sigma_{1}=\{j_1,\ldots,j_{s}\}$, and $\sigma_{1}\cap\sigma_{2}=\{k_{1},\ldots,k_{w}\}$. It suffices to show that, in $[H^{*}(\und{P}^K)]^{\otimes 2}$,
\begin{equation}\label{new lower bound}
\left(\prod_{i\in\sigma_{1}\bigtriangleup\sigma_2}\!\!(u_{i}\otimes 1+1\otimes u_{i})^{n_i}\!\right)\!\left(\,\prod_{i\in\sigma_{1}\cap\sigma_2}\!\!(u_{i}\otimes 1+1\otimes u_{i})^{\zcl(P^{n_i})}\!\right)\neq 0.
\end{equation}
Expanding the first factor on the left-hand side of~(\ref{new lower bound}), we get
\begin{align}
\left(\sum_{\ell_{1}=0}^{n_{i_1}}\binom{n_{i_1}}{\ell_1}\right.&\left.u_{i_1}^{n_{i_1}-\ell_1}\otimes u_{i_1}^{\ell_1}\rule{0mm}{7.5mm}\right)
\cdots\left(\sum_{\ell_{r}=0}^{n_{i_r}}\binom{n_{i_r}}{\ell_r}u_{i_{r}}^{n_{i_r}-\ell_{r}}\otimes u_{i_{r}}^{\ell_{r}}\right)\nonumber\\
\cdot\left(\sum_{q_{1}=0}^{n_{j_1}}\right.&\left.\binom{n_{j_1}}{q_1}u_{j_1}^{n_{j_1}-q_1}\otimes u_{j_1}^{q_1}\rule{0mm}{7.5mm}\right)
\cdots\left(\sum_{q_{s}=0}^{n_{j_s}}\binom{n_{j_s}}{q_s}u_{j_s}^{n_{j_s}-q_{s}}\otimes u_{j_{s}}^{q_{s}}\right)\nonumber\\
=&\left(\sum_{\substack{0\leq \ell_{t}\leq n_{i_t}\\ 1\leq t\leq r}}\binom{n_{i_1}}{\ell_1}\cdots\binom{n_{i_r}}{\ell_r}u_{i_1}^{n_{i_1}-\ell_1}\cdots u_{i_r}^{n_{i_r}-\ell_r}\otimes u_{i_1}^{\ell_1}\cdots u_{i_r}^{\ell_r}\right)\label{primerfactor}\\
&\cdot\left(\sum_{\substack{0\leq q_{t}\leq n_{j_t}\\ 1\leq t\leq s}}\binom{n_{j_1}}{q_1}\cdots\binom{n_{j_s}}{q_s}u_{j_1}^{n_{j_1}-q_1}\cdots u_{j_s}^{n_{j_s}-q_s}\otimes u_{j_1}^{q_1}\cdots u_{j_s}^{q_s}\right).\label{segundofactor}
\end{align}
On the other hand, it is elementary to see that $\zcl(P^{n_i})=2^{\theta_i}-1$ where $2^{\theta_{i}-1}\leq n_i<2^{\theta_{i}}$. In particular $n_i\leq 2^{\theta_i}-1<2n_{i}$. With this in mind, the second factor on the right-hand side of~(\ref{new lower bound}) becomes
\begin{align}
\left(\sum_{h_{1}=2^{\theta_{k_1}}-1-n_{k_1}}^{n_{k_1}}\right.&\left.\!\!\!\!\!\!\!\!\!\!u_{k_1}^{2^{\theta_{k_1}}-1-h_1}\otimes u_{k_1}^{h_1}\rule{0mm}{9mm}\right)\cdots\left(\sum_{h_{w}=2^{\theta_{k_w}}-1-n_{k_w}}^{n_{k_w}}\!\!\!\!\!\!\!\!\!\!u_{k_w}^{2^{\theta_{k_w}}-1-h_w}\otimes u_{k_w}^{h_w}\right)\nonumber\\
=&\left(\sum_{\substack{2^{\theta_{k_t}}-1-n_{k_t}\leq h_{t}\leq n_{k_t}\\ 1\leq t\leq w}}\!\!\!\!\!\!\!\!\!\!u_{k_1}^{2^{\theta_{k_1}}-1-h_1}\cdots u_{k_w}^{2^{\theta_{k_w}}-1-h_w}\otimes u_{k_1}^{h_1}\cdots u_{k_w}^{h_w}\right).\label{tercerfactor}
\end{align}

The conclusion then follows by observing that, in the product of the expressions above, the basis element
$$
u_{i_1}^{n_{i_1}}\cdots u_{i_r}^{n_{i_r}}u_{k_1}^{n_{k_1}}\cdots u_{k_w}^{n_{k_w}}\otimes u_{j_1}^{n_{j_1}}\cdots u_{j_s}^{n_{j_s}}u_{k_1}^{2^{\theta_{k_1}}-1-n_{k_1}}\cdots u_{k_w}^{2^{\theta_{k_w}}-1-n_{k_w}}
$$
arises only from the product of:
\begin{itemize}
\item $u_{i_1}^{n_{i_1}}\cdots u_{i_r}^{n_{i_r}}\otimes 1\,$ in (\ref{primerfactor}), with $\ell_{t}=0$ for all $t\in\{1,\ldots,r\}$;
\item $1\otimes u_{j_1}^{n_{j_1}}\cdots u_{j_s}^{n_{j_s}}\,$ in (\ref{segundofactor}), with $q_{t}=n_{j_t}$ for all $t\in\{1,\ldots,s\}$;
\item $u_{k_1}^{n_{k_1}}\cdots u_{k_w}^{n_{k_w}}\otimes u_{k_1}^{2^{\theta_{k_1}}-1-n_{k_1}}\cdots u_{k_w}^{2^{\theta_{k_w}}-1-n_{k_w}}\,$ in (\ref{tercerfactor}), with $h_{t}=2^{\theta_{k_t}}-1-n_{k_t}$ for all $t\in\{1,\ldots,w\}$.
\end{itemize}

\subsection{Axial and nonsingular maps}
\label{axial and nonsingular maps}
The rest of the paper is devoted to the proof of Theorem~\ref{Upper bound TC of Z(Pn,K)}. For this, we need the full force of the concept of axial map and its relationship to the topological complexity of real projective spaces. We review the needed details.

\begin{definition}\label{nonsingular map}
Let $n$ and $k$ be nonnegative integers. A continuous map
$
f\colon \real{n+1}\times\real{n+1}\to\real{k+1}
$
is called {nonsingular} provided
%\begin{itemize}
%\item[(a)] 
(a) $f(\lambda x,\mu y)=\lambda\mu f(x,y)$ for all $x,y\in\real{n+1}$, $\lambda,\mu\in\real{}$, and (b)
%\item[(b)] 
$f(x,y)=0$ implies either $x=0$ or $y=0$.
%\end{itemize}
If in addition $f(x,\ast)=x$ and $f(\ast,x)=x$ for all $x\in\mathbb{R}^{n+1}$, $f$ is called a {strong nonsingular map}. Here, $\ast$ stands for the base point $(1,0,\ldots,0)$ of $\real{n+1}$. Furthermore, we use the canonical embedding $\real{n+1}\hookrightarrow\real{k+1}$ to identify $(x_1,\ldots,x_{n+1})\in\real{n+1}$ with $(x_1,\ldots,x_{n+1},0,\ldots,0)\in\real{k+1}$.
\end{definition}

It is classical that a nonsingular map as above exists only for $k\geq n$, and that $k=n$ is possible only for $n\in\{1,3,7\}$, in which case the complex, quaternion, and octonion multiplications can be used. In the latter cases, the multiplication $\omega\cdot z$ yields a strong nonsingular map, while the multiplication $\omega\cdot\overline{z}$ yields a nonsingular map $f\colon \real{n+1}\times\real{n+1}\to\real{n+1}$ satisfying $$f(x,x)=(\lambda_x,0,\ldots,0)$$ with $\lambda_x>0$ for $x\neq 0$. From the polyhedral product viewpoint, the latter condition allows us to detect the $\cat$/$\TC$ mixed behavior illustrated in Example~\ref{ejemplomezclado} ---see the discussion following the next definition.

\begin{definition}\label{axial map}
Let $n$ and $k$ be nonnegative integers. A continuous map
$
\alpha:\RP^{n}\times\RP^{n}\to \RP^{k}
$
is called {axial} if its restrictions $\alpha|_{\RP^{n}\times\ast}$ and $\alpha|_{\ast\times\RP^{n}}$ are detected in mod 2 cohomology (so $k\geq n$ is forced). If in fact $\alpha|_{\RP^{n}\times\ast}$ and $\alpha|_{\ast\times\RP^{n}}$ are the equatorial inclusions, i.e., if $\alpha(A,\ast)=A$ and $\alpha(\ast,A)=A$ hold for any $A\in\RP^{n}\subset\RP^{k}$, then $\alpha$ is called a {strong axial map}. Here, $\ast$ is the base point of $\RP^{\ell}$ given by the equivalence class of $(1,0,\ldots,0)\in\real{\ell+1}$.
\end{definition}

It is elementary to see that a nonsingular map $f\colon\mathbb{R}^{n+1}\times\mathbb{R}^{n+1}\to \mathbb{R}^{k+1}$ induces and is induced by an axial map $\alpha\colon \RP^{n}\times\RP^{n}\longrightarrow \RP^{k}$, and that, in these conditions, $f$ is strong if and only if $\alpha$ is strong. In turn, a strong axial map $\RP^{n}\times\RP^{n}\to\RP^{k}$ can be constructed out of a smooth immersion $\RP^{n}\looparrowright\mathbb{R}^{k}$ (\cite[Theorem 2.1]{Sanderson}; note that $k>n$ is forced). Conversely, for $k>n$, the existence of a (not necessarily strong) axial map $\RP^{n}\times\RP^{n}\to \RP^{k}$ implies the existence of a smooth immersion $\RP^{n}\looparrowright \mathbb{R}^{k}$ (\cite{AGJ}). The ``strong'' requirement, which is then free when $k>n$, is central in what follows. (The situation $n=k$ is special in that there cannot be a smooth immersion $\RP^{n}\looparrowright \mathbb{R}^{n}$, and yet an axial map $\RP^{n}\times\RP^{n}\to \RP^{n}$ exists if and only if $n\in\{1,3,7\}$.)

These facts imply that, for $n\neq 1,3,7$, $\TC(\RP^{n})=\imm(\RP^{n})$, the Euclidean immersion dimension of $P^n$ (\cite[Theorem 7.1]{FTY}). A key point in the argument is the fact that, if $k>n$, an axial map $P^n\times P^n\to P^k$ is forced to be null-homotopic on the diagonal. The combination of (1) such a behavior with (2) the strong condition for axial maps yields the main ingredient in the proof of Theorem~\ref{Upper bound TC of Z(Pn,K)}. Actually, the strong property for axial maps is directly responsible for the mixed $\cat$/$\TC$ behavior ---and resulting apparent sharpness--- of Theorem~\ref{Upper bound TC of Z(Pn,K)} (see~(\ref{comportamientomezclado}) below).

The following fact is adapted from \cite[Lemma 5.7]{FTY}. We leave the proof as a standard algebraic topology exercise for the reader.

\begin{lemma}\label{strong axial map and diagonal}
For $n\neq 1,3,7$, there is an axial map $\alpha:\RP^{n}\times\RP^{n}\to\RP^{\TC(\RP^{n})}$ that is strong and satisfies $\alpha(A,A)=\ast$ for all $A\in P^n$. Consequently,
there is a strong nonsingular map $f:\real{n+1}\times\real{n+1}\to \real{\TC(\RP^{n})+1}$ such that $f(x,x)=(\lambda_{x},0,\ldots,0)$, $\lambda_{x}\geq 0$, for all $x\in\real{n+1}$, and $\lambda_{x}= 0$ only for $x=0$.
\end{lemma}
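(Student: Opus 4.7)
The plan is to extract the map $\alpha$ from a smooth immersion and then modify it near the diagonal by a standard obstruction-theoretic argument, after which the nonsingular map $f$ follows by the correspondence recalled in Subsection~\ref{axial and nonsingular maps}.

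First, since $n\neq 1,3,7$, the identification $\TC(\RP^n)=\imm(\RP^n)$ stated just before Definition~\ref{nonsingular map} (cf.~\cite[Theorem~7.1]{FTY}) provides a smooth immersion $\RP^n\looparrowright\real{\TC(\RP^n)}$, and the Sanderson construction \cite[Theorem~2.1]{Sanderson} converts this into a strong axial map $\alpha_{0}\colon\RP^{n}\times\RP^{n}\to\RP^{k}$, where $k:=\TC(\RP^n)>n$. Let $\delta:=\alpha_{0}\circ\Delta\colon\RP^{n}\to\RP^{k}$ be its restriction to the diagonal. The axial property gives $\alpha_{0}^{*}(u)=u_{1}+u_{2}$ in mod $2$ cohomology (with $u$ the generator of $H^{1}(\RP^{k};\Z/2)$ and $u_{1},u_{2}$ the two pulled-back generators on $\RP^{n}\times\RP^{n}$), so that $\delta^{*}(u)=2u=0$. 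Consequently $\delta$ lifts through the double cover $S^{k}\to\RP^{k}$ to a map $\widetilde{\delta}\colon\RP^{n}\to S^{k}$; since $S^{k}$ is $(k-1)$-connected and $\dim\RP^{n}=n<k$, elementary obstruction theory forces $\widetilde{\delta}$, and thus $\delta$, to be null-homotopic.

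Second, we promote this null-homotopy to a homotopy of $\alpha_{0}$ itself that produces the desired $\alpha$ with $\alpha(A,A)=\ast$ while preserving the strong axial condition. Because $\alpha_{0}(\ast,\ast)=\ast$ by strongness, the null-homotopy of $\delta$ can be taken to be based at $\ast$. The diagonal $\Delta\subset \RP^{n}\times\RP^{n}$ meets the ``axes'' $\RP^{n}\times\{\ast\}\cup\{\ast\}\times\RP^{n}$ only at $(\ast,\ast)$, so one can pick a tubular neighborhood $U$ of $\Delta$ that intersects the axes only at $(\ast,\ast)$. Applying the homotopy extension property to the cofibration $\Delta\hookrightarrow\RP^{n}\times\RP^{n}$ and damping the deformation outside $U$ with an Urysohn-type cutoff produces a homotopy $H\colon (\RP^{n}\times\RP^{n})\times I\to\RP^{k}$ with $H_{0}=\alpha_{0}$, $H_{1}|_{\Delta}\equiv\ast$, and $H_{t}$ fixed on the axes. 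Setting $\alpha:=H_{1}$ gives the required strong axial map.

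Finally, for the consequence, the correspondence recalled after Definition~\ref{axial map} turns $\alpha$ into a strong nonsingular map $f\colon\real{n+1}\times\real{n+1}\to\real{k+1}$. The preimage of the basepoint $\ast=[1:0:\cdots:0]$ under the quotient $\real{k+1}\setminus\{0\}\to\RP^{k}$ is the punctured axis $\{(\lambda,0,\ldots,0):\lambda\neq 0\}$, so $\alpha(A,A)=\ast$ pulls back to $f(x,x)=(\lambda_{x},0,\ldots,0)$ for a continuous $\lambda_{x}\in\R$; nonsingularity forces $\lambda_{x}=0$ iff $x=0$. For $n\geq 1$ the space $\real{n+1}\setminus\{0\}$ is connected, so $\lambda_{x}$ has constant sign there; the strong condition gives $f(\ast,\ast)=\ast$, hence $\lambda_{\ast}=1>0$, so $\lambda_{x}>0$ for $x\neq 0$, as required. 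The main obstacle I anticipate is the tubular-neighborhood bookkeeping in Step~2: the homotopy that kills $\delta$ must be carried out in a neighborhood of $\Delta$ that avoids the two axes (except at their common intersection with $\Delta$), so that the strong axial property survives the deformation; everything else is routine cohomology and connectivity.
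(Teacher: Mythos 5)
The paper explicitly leaves this lemma as an exercise (``adapted from \cite[Lemma 5.7]{FTY}''), so there is no in-text proof to compare against; your overall strategy---get a strong axial map into $\RP^{\TC(\RP^n)}$ from an immersion via Sanderson, observe the diagonal restriction is null-homotopic because $\TC(\RP^n)>n$, homotope rel the axes to kill the diagonal, then translate to a nonsingular map---is the right one and almost certainly the intended one. Step 1 and the translation in Step 3 are correct: the null-homotopy argument via lifting $\delta$ to $S^k$ is sound (and you correctly note a \emph{based} null-homotopy exists by choosing the lift with $\widetilde\delta(\ast)=(1,0,\ldots,0)$ and using that $S^k$ is $(k-1)$-connected, $n<k$), and the sign analysis for $\lambda_x$ is handled correctly.

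Step 2, however, has a genuine geometric gap: there is no tubular neighborhood $U$ of $\Delta$ that meets the axes $A=\RP^n\vee\RP^n$ only at $(\ast,\ast)$. Any neighborhood of $\Delta$ is in particular a neighborhood of $(\ast,\ast)\in\Delta\cap A$, and any neighborhood of $(\ast,\ast)$ contains points of $A$ other than $(\ast,\ast)$, because $\Delta$ and $A$ intersect there. A cutoff supported inside such a $U$ therefore cannot be simultaneously $1$ on all of $\Delta$ and $0$ on $A\setminus\{(\ast,\ast)\}$ (this would force a discontinuity at $(\ast,\ast)$), so the ``damping'' need not preserve the strong condition on the axes. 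The standard fix, which matches the flavor of ``standard algebraic topology exercise'' and explains the otherwise-uncited entry \cite{lill} in the bibliography, is to work with the single inclusion $\Delta\cup A\hookrightarrow\RP^n\times\RP^n$. This is a cofibration by Lillig's union theorem, since $\Delta$, $A$, and $\Delta\cap A=\{(\ast,\ast)\}$ are each cofibred closed subspaces. Define a homotopy on $\Delta\cup A$ by taking the \emph{based} null-homotopy of $\delta$ on $\Delta$ and the stationary homotopy on $A$; these agree at $(\ast,\ast)$ precisely because the null-homotopy was chosen based. Extend by the homotopy extension property to get $H$ with $H_0=\alpha_0$, $H_1|_\Delta\equiv\ast$, and $H_t|_A=\alpha_0|_A$ for all $t$. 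Then $\alpha:=H_1$ is strong and vanishes on the diagonal, and the rest of your argument goes through unchanged.
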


\subsection{Proof of Theorem~\ref{Upper bound TC of Z(Pn,K)}}
\label{general upper bound}
For each $i\in\{1,\ldots,m\}$, fix once and for all a nonsingular map
$f_{i}:\mathbb{R}^{n_{i}+1}\times\mathbb{R}^{n_{i}+1}\to \mathbb{R}^{\TC(\RP^{n_i})+1}$ such that
\begin{equation}\label{diagonal}
f_i(x,x)=(\lambda_{x},0,\ldots,0),\quad \lambda_{x}\geq 0,
\end{equation}
for all $x\in\real{n_i+1}$, with $\lambda_x>0$ for $x\neq 0$. Further, if $n_i\neq 1,3,7$, we also require $f_i$ to be strong:
\begin{equation}\label{strong}
f_i(\ast,x)=x=f_i(x,\ast), \mbox{ for all $x\in\real{n_{i}+1}$.}
\end{equation}

For $k\in\{0,1,\ldots,\TC(\RP^{n_i})\}$, let $f_{ik}$ denote the $(k+1)$-st coordinate map of $f_{i}$, and consider the subsets $V_{ik}\subset P^{n_i}\times P^{n_i}$ defined by
\begin{align*}
V_{i0}&=\{(A,B)\colon f_{i0}(a,b)\neq 0\texto{for some}(a,b)\in A\times B\}\\
V_{ik}&=\{(A,B)\colon A\neq B\texto{and}f_{ik}(a,b)\neq 0\texto{for some}(a,b)\in A\times B\}, \mbox{ for $k>0$.}
\end{align*}
Observe that $\{V_{i0},V_{i1},\ldots,V_{i\TC(\RP^{n_i})}\}$ is an open cover of $\RP^{n_i}\times\RP^{n_i}$, with the diagonal of $\RP^{n_i}$ contained in $V_{i0}$, in view of~(\ref{diagonal}).

\begin{remark}\label{alafty}
As observed in~\cite{FTY}, on each set $V_{ik}$, the end-points evaluation map $(\RP^{n_i})^{[0,1]}\to \RP^{n_i}\times\RP^{n_i}$ has a continuous local section $\lambda_{ik}:V_{ik}\to(\RP^{n_i})^{[0,1]}$ defined as follows: In the diagonal, $\lambda_{0k}(A,A)$ is the constant path at $A$. Otherwise, for $A\neq B$, choose unit vectors $a\in A$ and $b\in B$ with $f_{ik}(a,b)>0$. The only other pair of unit vectors satisfying the latter condition is $(-a,-b)$. Both pairs $(a,b)$ and $(-a,-b)$ determine the same orientation of the plane spanned by $A$ and $B$. We then set $\lambda_{ik}(A,B)$ to be the rotation with constant velocity from $A$ toward $B$ in the plane spanned by $A$ and $B$, following the direction determined by the above orientation. Notice that the continuity of $\lambda_{i0}$ on $V_{i0}$ follows from~(\ref{diagonal}), as $\lambda_{i0}(A,A)$ is constant.
\end{remark}

We replace the covering $\{V_{ik}\}_k$ of each $P^{n_i}$ by one which consists of pairwise disjoint sets. Put $U_{ik}:=V_{ik}\setminus (V_{i0}\cup\cdots\cup V_{i(k-1)})$
(so $U_{i0}=V_{i0}$). We say that an ordered pair of lines $(A,B)$ in $\real{n_{i}+1}$ produces $k$ zeroes ($k\in\{0,1,\ldots,\TC(\RP^{n_i})\}$) when $(A,B)\in U_{ik}$. The justification of the latter convention comes by observing that $(A,B)\in U_{ik}$ if only if $$f_{i0}(a,b)=\cdots=f_{i(k-1)}(a,b)=0\neq f_{ik}(a,b)$$ for some (and therefore any) vectors $a\in A$ and $b\in B$. The number of zeroes produced by $(A,B)$ is denoted $Z(A,B)$.

For simplicity we write $X=\und{P}^K$, the polyhedral product in Theorem~\ref{Upper bound TC of Z(Pn,K)}, but keep the notation ${P}^\sigma$ for the ``subproducts'' in Corollary~\ref{cor:glo}. An element $(A_1,A_2)$ of $X^{2}$ will be thought of as a matrix of size $m\times 2$, i.e.,
\begin{equation}
\nonumber
(A_1,A_2)=
\begin{pmatrix}
A_{11}&A_{12}\\
\vdots&\vdots\\
A_{m1}&A_{m2}
\end{pmatrix},
\end{equation}
where each column belongs to $X$, say $(A_{1j},\ldots,A_{mj})\in {P}^{\sigma_{j}}$ with $\sigma_{1},\sigma_{2}\in{K}$. Each row $(A_{i1},A_{i2})$ of the matrix $(A_1,A_2)$ lies in a unique set $U_{ik}$ with $k=Z(A_{i1},A_{i2})$. The number of zeroes determined by $(A_1,A_2)$, denoted by $Z(A_1,A_2)$, is the sum of zeroes produced by the rows of $(A_1,A_2)$,
$$
Z(A_1,A_2)=\sum_{i=1}^{m}Z(A_{i1},A_{i2}).
$$

It is apparent that $Z(A_1,A_2)\leq\sum_{i\in\sigma_{1}\cup\sigma_{2}}\TC(\RP^{n_i})$. However, in view of~(\ref{strong}), the number of zeroes produced by rows of type either $(A_{i1},\ast)$ or $(\ast, A_{i2})$ of $(A_1,A_2)$ calls for a more careful consideration. Namely, note that $Z(A_{i1},A_{i2})=Z(A_{i1},\ast)\leq n_i$ for $i\in \sigma_{1}\setminus\sigma_{2}$, because $f_{i}$ fulfills~(\ref{strong}) when $n_i\neq 1,3,7$ (recall that $n_i=\TC(\RP^{n_i})$ if $n_i=1,3,7$). Likewise, if $i\in \sigma_{2}\setminus\sigma_{1}$, then $Z(A_{i1},A_{i2})=Z(\ast,A_{i2})\leq n_i$. Therefore
\begin{equation}\label{comportamientomezclado}
Z(A_1,A_2)\leq\sum_{i\in \sigma_{1}\bigtriangleup\sigma_{2}}n_i+\sum_{i\in \sigma_1\cap\sigma_{2}}\TC(\RP^{n_i}),
\end{equation}
and we have proved:
\begin{proposition}
The sets $W_{j}=\{(A_1,A_2)\in X^{2}: Z(A_1,A_2)=j\}$, with $j\in\{0,1,\ldots,\norma{(n_1,\ldots,n_m)}\}$, form a pairwise disjoint covering of $X^{2}$.
\end{proposition}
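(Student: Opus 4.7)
The proposition has two parts to verify: the $W_j$ are pairwise disjoint, and they cover all of $X^2$ within the stated range of indices. My approach is to dispose of the disjointness at once by showing $Z(A_1,A_2)$ is a well-defined integer, and then to obtain the covering property by a row-by-row bounding argument that exploits the strong property of the chosen nonsingular maps.

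Pairwise disjointness is essentially automatic once $Z(A_1,A_2)$ is unambiguously defined. For this, I would observe that the sets $\{U_{ik}\}_{k=0}^{\TC(P^{n_i})}$ partition $P^{n_i}\times P^{n_i}$: by construction $U_{ik}=V_{ik}\setminus(V_{i0}\cup\cdots\cup V_{i,k-1})$, so each pair of lines lies in exactly one $U_{ik}$, namely the one with the smallest $k$ such that some pair of representatives has $f_{ik}$ nonzero on it. Hence $Z(A_{i1},A_{i2})$ is a well-defined nonnegative integer for each row, and so is the total $Z(A_1,A_2)$; different values of $Z$ then yield disjoint sets $W_j$.

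For the covering property, the real content is the uniform upper bound $Z(A_1,A_2)\leq\norma{(n_1,\ldots,n_m)}$. Since $X=\bigcup_{\sigma\in K}P^{\sigma}$, I would choose simplices $\sigma_{1},\sigma_{2}\in K$ with $A_{ij}=*$ whenever $i\notin\sigma_j$, and then analyze the contribution of each row $(A_{i1},A_{i2})$ according to whether $i$ lies in the complement of $\sigma_{1}\cup\sigma_{2}$, in $\sigma_{1}\setminus\sigma_{2}$, in $\sigma_{2}\setminus\sigma_{1}$, or in $\sigma_{1}\cap\sigma_{2}$. Rows with $i\notin\sigma_{1}\cup\sigma_{2}$ are of the form $(*,*)$ and produce $0$ zeros thanks to (\ref{diagonal}); rows with $i\in\sigma_{1}\cap\sigma_{2}$ trivially produce at most $\TC(P^{n_i})$ zeros, since $\{V_{ik}\}_{k=0}^{\TC(P^{n_i})}$ already covers $P^{n_i}\times P^{n_i}$.

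The delicate step, which I expect to be the main obstacle and is precisely the one that forces the strong condition into the construction, is the bound $Z(A_{i1},*)\leq n_i$ for $i\in\sigma_{1}\setminus\sigma_{2}$ (with the symmetric statement when $i\in\sigma_{2}\setminus\sigma_{1}$). If $n_i\in\{1,3,7\}$, then $\TC(P^{n_i})=n_i$ and the bound is trivial; otherwise, $f_i$ satisfies the strong property (\ref{strong}), so that $f_i(a,\pm e_1)=\pm a$ for any unit vector $a$ representing $A_{i1}$. Under the canonical embedding $\real{n_i+1}\hookrightarrow\real{\TC(P^{n_i})+1}$, the vector $a$ has all coordinates beyond index $n_i$ equal to zero, which forces the first nonzero coordinate of $f_i(a,\pm e_1)$ to lie at some index no larger than $n_i$. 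Summing the row-wise contributions reproduces inequality (\ref{comportamientomezclado}), whose right-hand side is bounded by $\norma{(n_1,\ldots,n_m)}$ by definition, completing the proof.
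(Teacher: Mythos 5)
Your proof is correct and follows essentially the same approach as the paper's: you decompose the rows of $(A_1,A_2)$ according to whether $i$ lies in $\sigma_1\bigtriangleup\sigma_2$, $\sigma_1\cap\sigma_2$, or neither, use the strong property (\ref{strong}) (together with $\TC(P^{n_i})=n_i$ for $n_i\in\{1,3,7\}$) to get the sharper bound $Z(A_{i1},\ast)\leq n_i$, and invoke (\ref{diagonal}) for the $(\ast,\ast)$ rows, recovering exactly the inequality (\ref{comportamientomezclado}). Your writeup spells out a couple of points the paper leaves implicit (the partition nature of the $U_{ik}$ and the index-counting behind $Z(A_{i1},\ast)\leq n_i$), but there is no difference in method.
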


\begin{remark}
The sets $W_j$ fail to give an open covering. However, they can be used to calculate the \emph{generalized} topological complexity of $X$, $\TC_{g}(X)$ (not to be confused 
with a higher $\TC_g$). 
This generalized topological complexity is defined in the same way as $\TC(X)$, except that the openness condition imposed on the cover of $X^{2}$ is not required. In our case, $\TC(X)$ and $\TC_{g}(X)$ agree since $X$ is a $\CW$ complex (see~\cite[Corollary 2.8]{JoseManuel}).
\end{remark}

The proof of Theorem~\ref{Upper bound TC of Z(Pn,K)} will be complete once a local rule (i.e., a local section for the end-points evaluation map $X^{[0,1]}\to X\times X$) is constructed on each $W_{j}$. This will be achieved by splitting $W_{j}$ into topologically disjoint subsets (Proposition~\ref{topological union proof} below), and then showing that each such subset admits a local rule.

A partition of an integer $j\in\{0,1,\ldots, \norma{(n_1,\ldots,n_m)}\}$ is an ordered tuple $(j_1,\ldots, j_{m})$ of integers satisfying $j=j_{1}+\cdots+j_{m}$ and $0\leq j_{i}\leq \TC(\RP^{n_i})$ for each $i=1,\ldots,m$. For such a partition we set
\begin{equation}
\nonumber
W_{(j_1,\ldots, j_{m})}=\{(A_1,A_2)\in X^{2}: Z(A_{i1},A_{i2})=j_i \texto{ for each }i\in[m]\}.
\end{equation}
We show that the resulting set-theoretic partition $W_{j}=\displaystyle\bigsqcup W_{(j_1,\ldots, j_{m})}$, where the union runs over the partitions $(j_1,\ldots,j_m)$ of $j$, is topological, i.e., $W_{j}$ has the weak topology determined by the several $W_{(j_1,\ldots, j_{m})}$. In fact:

\begin{proposition}\label{topological union proof}
Two different partitions $(j_1,\ldots, j_{m})$ and $(r_1,\ldots, r_{m})$ of the same $j\in\{0,1,\ldots,\norma{(n_1,\ldots,n_m)}\}$ have
$$
W_{(j_1,\ldots, j_{m})}\cap\overline{W_{(r_1,\ldots, r_{m})}}=\varnothing=\overline{W_{(j_1,\ldots, j_{m})}}\cap W_{(r_1,\ldots, r_{m})}.
$$
\end{proposition}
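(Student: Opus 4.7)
The plan is to exploit upper semi-continuity of the zero-counting function. For each $i\in[m]$, regard $Z_i\colon \RP^{n_i}\times\RP^{n_i}\to\{0,1,\ldots,\TC(\RP^{n_i})\}$, defined by $Z_i(A,B)=Z(A,B)$, and note that for every $k$,
$$
\{(A,B)\in\RP^{n_i}\times\RP^{n_i}: Z_i(A,B)\leq k\}=V_{i0}\cup V_{i1}\cup\cdots\cup V_{ik},
$$
which is open in $\RP^{n_i}\times\RP^{n_i}$ because each $V_{i\ell}$ is open (by continuity of the coordinate maps $f_{i\ell}$ and the fact that $\{A\neq B\}$ is open). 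Hence the set $\{Z_i\geq k+1\}$ is closed, i.e., $Z_i$ is upper semi-continuous, and the same holds for the induced map $Z_i\colon X^{2}\to\mathbb{Z}_{\geq 0}$ obtained by composition with the projection $X^{2}\to \RP^{n_i}\times\RP^{n_i}$ onto the $i$-th row.

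With this in hand I would argue as follows. Suppose $(A_1,A_2)\in W_{(j_1,\ldots,j_m)}\cap \overline{W_{(r_1,\ldots,r_m)}}$, and pick a sequence $(A_1^{(\nu)},A_2^{(\nu)})\in W_{(r_1,\ldots,r_m)}$ converging to $(A_1,A_2)$. Since $Z_i(A_{i1}^{(\nu)},A_{i2}^{(\nu)})=r_i$ for every $\nu$, upper semi-continuity of $Z_i$ forces
$$
r_i=\limsup_{\nu}Z_i(A_{i1}^{(\nu)},A_{i2}^{(\nu)})\leq Z_i(A_{i1},A_{i2})=j_i
$$
for every $i\in[m]$. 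Combining this with the constraint $\sum_{i=1}^{m}r_i=j=\sum_{i=1}^{m}j_i$ yields $r_i=j_i$ for all $i$, contradicting the assumption that the partitions $(j_1,\ldots,j_m)$ and $(r_1,\ldots,r_m)$ are distinct. An entirely symmetric argument handles the case $(A_1,A_2)\in \overline{W_{(j_1,\ldots,j_m)}}\cap W_{(r_1,\ldots,r_m)}$.

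I do not expect any substantive obstacle here: the only point that deserves genuine verification is openness of $V_{i0}$ at diagonal points, which is guaranteed by~(\ref{diagonal}) since $f_{i0}(a,a)=\lambda_{a}>0$ for $a\neq 0$, so the defining open condition for $V_{i0}$ holds in a neighborhood of every diagonal pair. Once upper semi-continuity is in place, the rest is the short averaging argument based on the fixed total $\sum_i j_i=\sum_i r_i=j$.
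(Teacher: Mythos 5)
Your proposal is correct and rests on exactly the same topological fact the paper uses: for a pair in $W_{(r_1,\ldots,r_m)}$ the coordinate functions $f_{\ell k}$ vanish for $k<r_\ell$, and this vanishing is a closed condition, so it persists on $\overline{W_{(r_1,\ldots,r_m)}}$; you have merely repackaged this as upper semi-continuity of the zero-count $Z_i$. The only cosmetic difference is that the paper picks a single index $\ell$ with $j_\ell<r_\ell$ and gets the disjointness directly, whereas you bound all $m$ coordinates and then invoke $\sum j_i=\sum r_i$ to force equality of the partitions.
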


\begin{proof}
It suffices to show the first equality assuming $j_{\ell}<r_{\ell}$ for some $\ell\in[m]$. For elements $(A_{1},A_{2})\in W_{(j_1,\ldots,j_{m})}$ and $(B_{1},B_{2})\in W_{(r_1,\ldots,r_{m})}$ we have
\begin{align*}
&(A_{\ell 1},A_{\ell 2})\in U_{\ell j_{\ell}}\Rightarrow
f_{\ell j_{\ell}}(a_{\ell 1},a_{\ell 2})\neq 0 \text{ \ for any \ }(a_{\ell 1},a_{\ell 2})\in A_{\ell 1}\times A_{\ell 2}\\
&(B_{\ell 1},B_{\ell 2})\in U_{\ell r_{\ell}}\Rightarrow
f_{\ell j_{\ell}}(b_{\ell 1},b_{\ell 2})=0 \text{ \ for any \ }
(b_{\ell 1}, b_{\ell 2})\in B_{\ell 1}\times B_{\ell 2}.
\end{align*}
The assertion then follows since the latter condition is inherited by elements of $\overline{W_{(r_1,\ldots,r_{m})}}$.
\end{proof}

In the rest of the paper we construct a local rule on each $W_{(j_1,\ldots,j_m)}$. For $i\in\{1,\ldots,m\}$, consider the canonical Riemannian structure on the unit $n_i$-sphere $S^{n_i}$. Since the antipodal involution ${S}^{n_i}\to{S}^{n_i}$ is a fixed-point free properly discontinuous isometry, $\RP^{n_i}$ inherits a canonical quotient Riemannian structure $g_i$. Let $L_{i}(\gamma)$ denote the resulting length of a smooth curve $\gamma$ in $\RP^{n_i}$, and let
$$
d_{i}(x,y)=\inf \{L_{i}(\gamma):\gamma\texto{is a geodesic on}\RP^{n_i}\texto{from}x\texto{to}y\}
$$
be the associated metric. Note that the curves $\lambda_{ik}(A,B)$ described in Remark~\ref{alafty} are geodesic on $\RP^{n_i}$. Without loss of generality we can assume that each $g_i$ is normalized so that any geodesic $\gamma$ on $\RP^{n_i}$ satisfies $L_{i}(\gamma)\leq 1/2$.

As a first step, we reparametrize the families of paths $\lambda_{ik}$. Explicitly, for $k\in\{0,1,\ldots,\TC(\RP^{n_i})\}$, consider the section $\tau_{ik}:U_{ik}\to (\RP^{n_i})^{[0,1]}$ of the end-points evaluation map $(\RP^{n_i})^{[0,1]}\to \RP^{n_i}\times\RP^{n_i}$ given by
\[
\tau_{ik}(A_{1},A_{2})(t)=\begin{cases}
A_1,&\texto{if } d_{i1}+d_{i2}=0;\\
\lambda_{ik}(A_1,A_2)(\frac{t}{d_{i1}+d_{i2}}),&\texto{if } 0\leq t\leq (d_{i1}+d_{i2})\neq 0;\\
A_{2},&\texto{if } 0\neq (d_{i1}+d_{i2})\leq t\leq 1,
\end{cases}
\]
for $(A_1,A_2)\in U_{ik}$,  where $d_{ij}=d_{i}(A_j,\ast)$, $j=1,2$. Note that $\tau_{ik}$ is continuous on the open subset of $U_{ik}$ determined by the condition $d_{i1}+d_{i2}\neq 0$. The latter open subset of $U_{ik}$ equals in fact $U_{ik}$ unless $k=0$, so that $\tau_{ik}$ is continuous on the whole $U_{ik}$ for $k\in\{1,\ldots,\TC(\RP^{n_i})\}$. The continuity of $\tau_{i0}$ on $U_{i0}$ follows from the continuity of $\lambda_{i0}$ and the fact that $\lambda_{i0}(A,A)$ is the constant path for all $A\in\RP^{n_i}$. Note in addition that $\tau_{ik}$ reaches its end point at time $d_{i1}+d_{i2}$ (this is function-wise speaking). In particular $\tau_{ik}(A_1,\ast)$ reaches $\ast$ at time $d_{i1}$.

The map $\varphi:X^{2}\to(\prod_{i=1}^{m}\RP^{n_i})^{[0,1]}$ we are after is defined by
$$
\varphi(A_{1},A_{2})=(\varphi_{1}(A_{11},A_{12}),\ldots,\varphi_{m}(A_{m1},A_{m2})),
$$
with $i$-th coordinate $\varphi_{i}(A_{i1},A_{i2})$, the path in $\RP^{n_i}$ from $A_{i1}$ to $A_{i2}$ given by
\begin{equation}\label{final movement}
\varphi_{i}(A_{i1},A_{i2})(t)=\begin{cases}
A_{i1},&\texto{if } 0\leq t\leq t_{A_{i1}};\\
\mu(A_{i1},A_{i2})(t-t_{A_{i1}}),&\texto{if } t_{A_{i1}}\leq t\leq 1.
\end{cases}
\end{equation}
Here, $t_{A_{i1}}=1/2-d_{i}(A_{i1},\ast)$ and
\begin{equation}\label{final formulas}
\mu(A_{i1},A_{i2})=\begin{cases}
\tau_{i0}(A_{i1},A_{i2}),&\texto{if } (A_{i1},A_{i2})\in U_{i0};\\
\hspace{1mm}\vdots&\hspace{2mm}\vdots\\
\tau_{i\TC(\RP^{n_i})}(A_{i1},A_{i2}),&\texto{if } (A_{i1},A_{i2})\in U_{i\TC(\RP^{n_i})}.
\end{cases}
\end{equation}

\smallskip
By construction, $\varphi$ is a global section of the end-points evaluation map
$$
\left(\prod_{i=1}^{m}\RP^{n_i}\right)^{[0,1]}\!\!\longrightarrow \,\left(\prod_{i=1}^{m}\RP^{n_i}\right)^{2}\!\!.
$$
Although $\varphi$ is not globally continuous, the restriction of $\varphi$ to each $W_{(j_1,\ldots, j_{m})}$, where $(j_1,\ldots,j_{m})$ is a partition of $j\in\{0,1,\ldots, \norma{(n_1,\ldots,n_m)}\}$, is continuous because formulas~(\ref{final formulas}) can be rewritten as
$$
\mu=\begin{cases}
\tau_{i0},&\texto{if } j_{i}=0;\\
\hspace{1mm}\vdots&\hspace{2mm}\vdots\\
\tau_{i\TC(\RP^{n_i})},&\texto{if } j_{i}=\TC(\RP^{n_i}).
\end{cases}
$$
So, the crux of the matter is to show (in Proposition~\ref{porfin} below) that the restriction of $\varphi$ to each $W_{(j_1,\ldots,j_m)}$ lands in $X^{[0,1]}$.

\begin{remark}\label{explanation of formulas}
We spell out the motion planning instructions at the level of each factor in the polyhedral product. For $(A_{i1},A_{i2})\in U_{ik}$ with $k\in\{0,1,\ldots,\TC(\RP^{n_i})\}$, the navigational instruction is:
\begin{itemize}
\item stay at $A_{i1}$, for times $t\in[0,1/2-d_{i}(A_{i1},\ast)]$;
\item move from $A_{i1}$ to $A_{i2}$ at constant speed (recall the normalization) via $\tau_{ik}$, for times $t\in [1/2-d_{i}(A_{i1},\ast),1/2+d_{i}(A_{i2},\ast)]$;
\item stay at $A_{i2}$, for times $t\in[1/2+d_{i}(A_{i2},\ast),1]$.
\end{itemize}
\end{remark}
\begin{example}\label{from basepoint to A}
Suppose $A_{i1}=\ast$ and let $A_{i2}$ be any line through the origin in $\real{{n_i}+1}$. %Assume $(A_{i1},A_{i2})\in U_{ik}$ for some $k\in\{0,1,\ldots,\TC(\RP^{n_i})\}$.
In this case, $t_{A_{i1}}=1/2-d_{i}(A_{i1},\ast)=1/2-0=1/2$. By Remark~\ref{explanation of formulas}, the path $\varphi_{i}(A_{i1},A_{i2})$ obeys the instructions:
\begin{itemize}
\item for $t\in[0,1/2]$, stay at $\ast$;
\item for $t\in[1/2,1/2+d_{i}(A_{i2},\ast)]$, move from $\ast$ to $A_{i2}$ at constant speed;
\item for $t\in[1/2+d_{i}(A_{i2},\ast),1]$, stay at $A_{i2}$.
\end{itemize}
\end{example}
\begin{example}\label{from A to basepoint}
Suppose $A_{i2}=\ast$ (so that $d_i(A_{i2},\ast)=0$) and let $A_{i1}$ be any line through the origin in $\real{n_{i}+1}$.
%Assume $(A_{i1},A_{i2})\in U_{ik}$ for some $k\in\{0,1,\ldots,\TC(\RP^{n_i})\}$.
Then the path $\varphi_{i}(A_{i1},A_{i2})$ obeys the instructions:
\begin{itemize}
\item for $t\in[0,1/2-d_{i}(A_{i1},\ast)]$, stay at $A_{i1}$;
\item for $t\in[1/2-d_{i}(A_{i1},\ast),1/2]$, move from $A_{i1}$ to $\ast$ at constant speed;
\item for $t\in[1/2,1]$, stay at $\ast$.
\end{itemize}
\end{example}

\begin{proposition}\label{porfin}
The image of $\varphi$ is contained in $X^{[0,1]}$.
\end{proposition}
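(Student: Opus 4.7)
The plan is to invoke Examples~\ref{from basepoint to A} and~\ref{from A to basepoint} to reduce the proof to a short combinatorial argument. Given $(A_1, A_2) \in X^2$, I would choose simplices $\sigma_1, \sigma_2 \in K$ with $A_j \in P^{\sigma_j}$, so that $A_{ij} = \ast$ whenever $i \notin \sigma_j$. Since $K$ is closed under taking faces, it suffices to prove that, for each $t$, the set of indices $i$ with $\varphi_i(A_{i1}, A_{i2})(t) \neq \ast$ is contained in $\sigma_1$ when $t \in [0, 1/2]$ and in $\sigma_2$ when $t \in [1/2, 1]$. This will place $\varphi(A_1, A_2)(t)$ in $P^{\sigma_1} \subseteq X$ on the first interval and in $P^{\sigma_2} \subseteq X$ on the second.

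For $i \notin \sigma_1$, Example~\ref{from basepoint to A} applied to $A_{i1} = \ast$ states exactly that $\varphi_i(A_{i1}, A_{i2})(t) = \ast$ for all $t \in [0, 1/2]$; indeed, this follows directly from the upper clause of~(\ref{final movement}) once one notes $d_i(A_{i1}, \ast) = 0$ and hence $t_{A_{i1}} = 1/2$. Symmetrically, for $i \notin \sigma_2$, Example~\ref{from A to basepoint} applied to $A_{i2} = \ast$ gives $\varphi_i(A_{i1}, A_{i2})(t) = \ast$ for all $t \in [1/2, 1]$; the verification amounts to observing that the reparametrized geodesic $\tau_{ik}(A_{i1}, \ast)$ (for whichever index $k$ places $(A_{i1}, \ast)$ in $U_{ik}$) arrives at the basepoint at time $d_i(A_{i1}, \ast)$ and remains there, so that
$$\varphi_i(A_{i1}, A_{i2})(t) = \tau_{ik}(A_{i1}, \ast)\bigl(t - t_{A_{i1}}\bigr) = \ast \quad \text{for } t \geq 1/2,$$
using $t - t_{A_{i1}} = t - (1/2 - d_i(A_{i1}, \ast)) \geq d_i(A_{i1}, \ast)$.

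The point worth isolating is the synchronized drop to $\ast$ at $t = 1/2$: for $i \in \sigma_1 \setminus \sigma_2$ the $i$-th coordinate reaches $\ast$ by the midpoint via $\tau_{ik}$, while for $i \in \sigma_2 \setminus \sigma_1$ it launches from $\ast$ at the midpoint. This is the geometric feature that makes the local rule respect the polyhedral product constraint, and it mirrors the symmetric-difference splitting in~(\ref{comportamientomezclado}) that underlies Theorem~\ref{Upper bound TC of Z(Pn,K)}.
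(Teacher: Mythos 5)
Your proof is correct and follows essentially the same route as the paper's: both rely on Examples~\ref{from basepoint to A} and~\ref{from A to basepoint} to show that for $i\notin\sigma_1$ the $i$-th coordinate stays at $\ast$ on $[0,1/2]$, and for $i\notin\sigma_2$ it has reached $\ast$ by $t=1/2$, so $\varphi(A_1,A_2)$ lies in $P^{\sigma_1}$ on the first half and in $P^{\sigma_2}$ on the second. You merely spell out a bit more explicitly the arithmetic behind Example~\ref{from A to basepoint}, which the paper leaves implicit.
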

\begin{proof}
Let $(A_{1},A_{2})\in X^{2}$, say $(A_{1j},\ldots,A_{mj})\in {P}^{\sigma_{j}}$ with $\sigma_1,\sigma_{2}\in{K}$. Then, for $i\notin\sigma_{1}$, Example~\ref{from basepoint to A} shows that $A_{i1}=\ast$ keeps its position through time $t\leq 1/2$. Consequently $\varphi(A_1,A_{2})([0,1/2])\subseteq {P}^{\sigma_1}\subseteq X$. Likewise, for $i\notin\sigma_{2}$, Example~\ref{from A to basepoint} shows that the path $\varphi_{i}(A_{i1},A_{i2})$ has reached its final position $A_{i2}=\ast$ at time $1/2$, so that $\varphi(A_1,A_{2})([1/2,1])\subseteq {P}^{\sigma_{2}}\subseteq X$.
\end{proof}

\end{document}